\documentclass[10pt]{amsart}

\usepackage{amsmath, amssymb,amsmath, graphicx, xypic}
\usepackage{hyperref}
\usepackage[small]{caption}
\usepackage{txfonts}
\usepackage{enumitem}

\numberwithin{equation}{section}

\DeclareMathOperator{\FV}{FV}
\DeclareMathOperator{\catm1}{{CAT}(-1)}
\DeclareMathOperator{\cat0}{{CAT}(0)}
\newcommand{\nclose}[1]{\ensuremath{\langle\!\langle#1\rangle\!\rangle}}

\def\N{\mathbb{N}} 
\def\Z{\mathbb{Z}}
\def\ZH{\mathbb{Z}H}
\def\ZG{\mathbb{Z}G}
\def\mc{\mathcal}
\def\cd{\mathsf{cd}}
\def\cdF{\mathsf{cd}_{\mc F}}
\def\gdF{\mathsf{gd}_{\mc F}}

\newcommand {\Or}{\mathcal{O}_{\mathcal F}(G)}
\newcommand {\OrH}{\mathcal{O}_{\mathcal F\cap H}(H)}

\def\ZGF{\Z_{G,\mc F}}

\theoremstyle{theorem}
\newtheorem{theorem}{Theorem}[section]
\newtheorem{lemma}[theorem]{Lemma}
\newtheorem{proposition}[theorem]{Proposition}
\newtheorem{corollary}[theorem]{Corollary}
\newtheorem{step}{Step}

\theoremstyle{remark}
\newtheorem{definition}[theorem]{Definition}

\newtheorem{remark}[theorem]{Remark}
\newtheorem{question}[theorem]{Question}

\DeclareMathOperator{\aut}{\mathsf{Aut}}

\DeclareMathOperator{\Dist}{\mathsf{Dist}}

\DeclareMathOperator{\kernel}{\mathsf{Ker}}

\begin{document}

\title[]{Subgroups of Relatively Hyperbolic Groups of Bredon Cohomological Dimension $2$}
\author[E.~Mart\'inez-Pedroza]{Eduardo Mart\'inez-Pedroza}
   \address{Memorial University\\ St. John's, Newfoundland and Labrador, Canada}
  \email{emartinezped@mun.ca}
\subjclass[2000]{20F67, 20F65, 20J05, 57S30,  57M60, 55N25}
\keywords{Isoperimetric Functions, Dehn functions, Hyperbolic Groups, Relatively Hyperbolic groups, Finiteness properties, Cohomological dimension, Homological isoperimetric inequalities, Orbit category, Bredon modules}

\maketitle

\begin{abstract}
A remarkable result of Gersten states that the class of hyperbolic groups of cohomological dimension   $2$ is closed under taking finitely presented (or more generally $FP_2$) subgroups. We prove the analogous result for relatively hyperbolic groups of Bredon cohomological dimension $2$ with respect to the family of parabolic subgroups. A class of groups where our result applies consists of  $C'(1/6)$ small cancellation products.  The proof relies on an algebraic approach to relative homological Dehn functions, and a characterization of relative hyperbolicity in the framework of finiteness properties over Bredon modules and homological Isoperimetric inequalities. 
\end{abstract}


\section{Introduction}

It is a remarkable result of Gersten that the class of hyperbolic groups  of cohomological dimension at most $2$   is closed under taking finitely presented subgroups, see Theorem~\ref{thm:GerstenSubgp}.  The dimension assumption  is sharp since Brady exhibited a hyperbolic group of cohomological dimension $3$ containing a finitely presented subgroup which is not hyperbolic~\cite{Br99}.

\begin{theorem}[Gersten]\cite[Theorem 5.4]{Ge96}\label{thm:GerstenSubgp}
Let $G$ be a hyperbolic group and suppose $\mathsf{cd}(G)\leq 2$. 
If $H$ is a subgroup of type $FP_2$, then $H$ is hyperbolic.
\end{theorem}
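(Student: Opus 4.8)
The plan is to detect hyperbolicity homologically and then transfer a linear homological isoperimetric inequality from $G$ down to $H$. Concretely, I would use the characterization that a group of type $FP_2$ is hyperbolic if and only if it admits a finite-type partial free resolution of $\Z$ over its group ring on which every $1$-cycle bounds a $2$-chain of linearly comparable $\ell^1$-norm, i.e.\ it has a linear homological Dehn function $\FV$. The substantial implication here --- ``linear $\FV$ $\Rightarrow$ hyperbolic for $FP_2$ groups'' --- I would cite as a black box; the converse is the routine van Kampen diagram estimate. Granting this, the whole problem becomes: exhibit such a resolution and constant for $H$, starting from the one that $G$ enjoys because it is hyperbolic.

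First I would fix the algebraic data. As $G$ is hyperbolic it is finitely presented, and as $\cd(G)\le 2$ one may fix a length-two resolution $0\to C_2\xrightarrow{\partial_2}C_1\to C_0\to\Z\to 0$ over $\ZG$ by finitely generated projective $\ZG$-modules, arranged so that the linear homological isoperimetric inequality of $G$ is carried by it and so that $\partial_2$ is injective (using $\cd(G)\le 2$: replace $C_2$ by the second syzygy and reconcile $\ell^1$-norms, which is routine). As $H$ is of type $FP_2$, fix a finite-type partial free resolution $D_2\xrightarrow{\delta_2}D_1\xrightarrow{\delta_1}D_0\to\Z\to 0$ over $\ZH$. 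Restriction of scalars along $\ZH\hookrightarrow\ZG$ turns $C_\bullet$ into a length-two resolution of $\Z$ by projective --- but, in general, infinitely generated --- $\ZH$-modules (so, incidentally, $\cd(H)\le 2$). By the comparison theorem there is a chain map $\phi_\bullet\colon D_\bullet\to C_\bullet$ lifting $\mathrm{id}_\Z$, and $\phi_1,\phi_2$ have finite operator norm with respect to the $\ell^1$-norms because they emanate from the \emph{finitely generated} modules $D_1,D_2$; note also that the $\ell^1$-norm on each $C_i$ computed from the $\ZG$-basis agrees with the one from the induced $\ZH$-basis.

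Now the transfer. Given a $1$-cycle $z\in D_1$ with $\|z\|_1=n$, the image $\phi_1(z)\in C_1$ is a $1$-cycle of norm at most $\|\phi_1\|\,n$, so by the inequality for $G$ there is a $w\in C_2$, unique since $\partial_2$ is injective, with $\partial_2 w=\phi_1(z)$ and $\|w\|_1\le K_G\|\phi_1\|\,n+K_G$. Since $\delta_1 z=0$ and $D_\bullet$ is exact at $D_1$, write $z=\delta_2(c_0)$ for some $c_0\in D_2$ with no a priori norm control; then $\partial_2\phi_2(c_0)=\phi_1\delta_2(c_0)=\phi_1(z)=\partial_2 w$, and injectivity of $\partial_2$ --- \emph{this is where the hypothesis $\cd(G)\le 2$ is essential} --- forces $\phi_2(c_0)=w$. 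Thus every $\delta_2$-preimage of $z$ is carried by $\phi_2$ onto the single, $\ell^1$-small vector $w$. The remaining and decisive point is to convert this into a $\delta_2$-preimage $c\in D_2$ of $z$ with $\|c\|_1$ linear in $\|w\|_1$; two-dimensionality is used once more here, the idea being that with $\partial_2$ injective a boundary has an essentially canonical filling which can be pulled back through $\phi_2$ coset-by-coset, up to a correction in $\ker\delta_2$. This gives $\|c\|_1\le K\|z\|_1+K$, i.e.\ $H$ has linear $\FV$, hence $H$ is hyperbolic.

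The main obstacle is precisely this last step. One cannot simply use a comparison map $C_\bullet\to D_\bullet$ in the reverse direction: over $\ZH$ the complex $C_\bullet$ is infinitely generated, and no chain map into the finitely generated $D_\bullet$ has finite $\ell^1$-operator norm. The hypothesis $\cd(G)\le 2$ is what rescues the argument, forcing $\partial_2$ to be injective so that fillings are unique --- there is no higher ``phantom'' slack (the analogue of $\pi_2=0$ for the relevant $2$-complex) in which the transferred filling could blow up. This is exactly the step that breaks for $\cd(G)=3$, in accordance with Brady's example showing the dimension bound is sharp. The present paper runs this same architecture in the relatively hyperbolic setting, with $\ZG$-resolutions replaced by Bredon resolutions over the orbit category $\Or$ and the classical homological characterization of hyperbolicity replaced by its Bredon-homological, relative-Dehn-function counterpart.
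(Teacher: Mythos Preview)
Your overall architecture --- characterize hyperbolicity by linearity of $\FV$ and then transfer the linear bound from $G$ to $H$ --- is correct and is exactly the strategy the paper runs (in the Bredon setting, specializing to Gersten's theorem when $\mc F$ is trivial). But the step you yourself flag as ``the remaining and decisive point'' is a genuine gap, and your sketch does not close it. Knowing that every $\delta_2$-preimage $c_0$ of $z$ satisfies $\phi_2(c_0)=w$ with $\|w\|_1$ small gives no control whatsoever on $\|c_0\|_1$: the map $\phi_2$ points the wrong way, need not be injective, and even when injective does not bound preimage norms by image norms. ``Pulled back through $\phi_2$ coset-by-coset, up to a correction in $\ker\delta_2$'' is not a mechanism --- nothing forces that correction to be small, and this is precisely the quantity you are trying to bound.

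The idea you are missing is to arrange the $H$-resolution \emph{inside} the $G$-resolution and then build a $\ZH$-linear \emph{retraction} going the other way. In the paper's argument (trivial-family case of Theorem~\ref{thm:subFV}): realize the $G$-resolution by a cocompact free $G$-graph $\Gamma$ chosen, via $\cd(G)\le 2$ and Schanuel's lemma, so that $Z_1(\Gamma)$ is a finitely generated \emph{free} $\ZG$-module; then take a cocompact $H$-invariant subgraph $\Delta\subset\Gamma$. One has $0\to Z_1(\Delta)\to Z_1(\Gamma)\to Z_1(\Gamma,\Delta)\to 0$ exact, and the exact sequence $0\to Z_1(\Gamma,\Delta)\to C_1(\Gamma,\Delta)\to C_0(\Gamma,\Delta)\to 0$ with free outer terms shows $Z_1(\Gamma,\Delta)$ is $\ZH$-projective, hence there is a $\ZH$-retraction $\rho\colon Z_1(\Gamma)\to Z_1(\Delta)$. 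This $\rho$ is not globally bounded --- as you correctly note, $Z_1(\Gamma)$ is infinitely generated over $\ZH$ --- but since $Z_1(\Gamma)$ is free on a $G$-set $S$ and $Z_1(\Delta)$ is finitely generated over $\ZH$, the inclusion $Z_1(\Delta)\hookrightarrow Z_1(\Gamma)$ lands in the span $P_1$ of finitely many $H$-orbits of $S$; redefining $\rho$ to vanish on the complementary free summand $P_2$ preserves $\rho\circ\iota=\mathrm{id}$ and makes $\|\rho(p)\|\le C\|p\|$ hold for \emph{all} $p$, because $\rho|_{P_1}$ is now a map between finitely generated $\ZH$-modules. Then for $x\in Z_1(\Delta)$ one gets $\|x\|_{Z_1(\Delta)}=\|\rho(x)\|\le C\|x\|_{Z_1(\Gamma)}\le C\cdot\FV_G(\|x\|_{C_1(\Delta)})$, which is the transfer. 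The essential use of $\cd(G)\le 2$ is not merely the injectivity of $\partial_2$ you invoke, but rather that it forces $Z_1(\Gamma)$ to be (stably) free, so that the $\Z$-basis $S$ exists and the $P_1\oplus P_2$ splitting trick is available.
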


In this article we show that such subgroup phenomena hold  in the class of relatively hyperbolic groups of Bredon cohomological dimension $2$ with respect to parabolic subgroups.    We use the approach to relative hyperbolicity by Bowditch~\cite{Bo12} which is equivalent to the approach by Osin~\cite{Os06} for finitely generated groups~\cite[Theorem 6.10]{Os06}. 

\begin{definition}(Bowditch relatively hyperbolic groups)\label{def:BowRH}\cite{Bo12}
A graph $\Gamma$ is \emph{fine} if every edge is contained in only finitely many circuits (embedded closed paths) of length $n$ for any integer $n$. A finitely generated  group $G$ is \emph{hyperbolic relative to a finite collection of subgroups $\mc P$} if
$G$ acts on a connected, fine, hyperbolic graph $\Gamma$ with finite edge stabilizers, finitely many orbits of edges,  and $\mc P$ is a set of representatives of distinct conjugacy classes of vertex stabilizers (such that each infinite stabilizer is represented).   Subgroups of $G$ which are conjugate to a subgroup of some $P\in \mc P$ are called \emph{parabolic subgroups}. 
\end{definition}

A \emph{family of subgroups} $\mc F$ of a group $G$ is a non-empty collection of subgroups closed under conjugation and subgroups. The theory of (right) modules over the orbit category $\mc{O_F}(G)$ was established by Bredon~\cite{Br67}, tom Dieck~\cite{tD87}  and  L{\"u}ck~\cite{Lu89}. In the case that $\mc F$ is the trivial family, the $\Or$-modules are $\ZG$-modules. In this context, the notions of cohomological dimension $\cdF(G)$ and finiteness properties $FP_{n, \mc F}$ for the pair $(G, \mc F)$ are defined analogously as their counterparts $\cd(G)$ and $FP_n$, see Section~\ref{sec:finiteness}. 

To simplify the notation in our statements we introduce the following non-standard terminology. A group $G$ is \emph{hyperbolic relative to a family of subgroups $\mc F$} if there is a finite collection of subgroups $\mc P$ generating the family $\mc F$, such that $G$ is hyperbolic relative to $\mc P$.  If $H\leq G$ and $\mc F$ is a family of subgroups, we denote by $\mc F \cap H$ the family of subgroups $\{K\cap H\colon K\in \mc F\}$.  Our main result is the following.

\begin{theorem} \label{thm:subgp}\label{thmi:main}
Let $G$ be hyperbolic relative to a family $\mc F$ containing all finite subgroups. If $\mathsf{cd}_{\mc F}(G)\leq 2$ and $H\leq G$ is of type $FP_{2, \mc F\cap H}$,
then $H$ is hyperbolic relative to $\mc F\cap H$.
\end{theorem}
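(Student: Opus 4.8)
The plan is to follow the scheme of Gersten's proof of Theorem~\ref{thm:GerstenSubgp}, replacing $\ZG$-modules by modules over the orbit category $\Or$ and the Dehn function by a \emph{relative homological Dehn function} $\FVol$. There are three ingredients. First, a characterization (established in the framework of Bredon modules): a pair $(G,\mc F)$ of type $FP_{2,\mc F}$ is hyperbolic relative to $\mc F$ if and only if $\FVol_{(G,\mc F)}$ admits a linear upper bound, where $\FVol_{(G,\mc F)}$ is the filling function of $1$-cycles computed in a finitely generated free partial $\Or$-resolution $P_2\to P_1\to P_0\to\ZGF\to 0$ of the coefficient module $\ZGF$. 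Second, subgroup monotonicity of Bredon cohomological dimension: restricting along $H\hookrightarrow G$ carries a free $\Or$-module to a free $\OrH$-module, since $G/K$ decomposes as a disjoint union of $H$-orbits $H/(K^g\cap H)$ with $K^g\cap H\in\mc F\cap H$; hence a length-$2$ free $\Or$-resolution for $G$ restricts to one for $H$ and $\mathsf{cd}_{\mc F\cap H}(H)\le\cdF(G)\le 2$, and the hypothesis that $H$ is of type $FP_{2,\mc F\cap H}$ lets us take the restricted partial resolution finitely generated through degree $1$. Third, a transfer principle: when $\cdF(G)\le 2$, a linear bound on $\FVol_{(G,\mc F)}$ forces a linear bound on $\FVol_{(H,\mc F\cap H)}$.

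\textbf{Execution.}
Because $G$ is hyperbolic relative to $\mc F$, the forward direction of the characterization furnishes a linear bound on $\FVol_{(G,\mc F)}$; this can be extracted from a homological isoperimetric inequality on the fine hyperbolic graph of Definition~\ref{def:BowRH}. Fix a free partial $\Or$-resolution of $\ZGF$; since $\cdF(G)\le 2$ we may assume it has the form $0\to P_2\to P_1\to P_0\to\ZGF\to 0$ with $P_0,P_1$ finitely generated free and $P_2=\ker(P_1\to P_0)$ projective. Restricting to $H$ and using $FP_{2,\mc F\cap H}$, obtain a corresponding partial $\OrH$-resolution of $\ZHF$ with finitely generated free terms in degrees $0,1$ and projective degree-$2$ term, whence $\mathsf{cd}_{\mc F\cap H}(H)\le 2$. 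Given a $1$-cycle $z$ in the degree-$1$ term of the $H$-resolution lying in the kernel of the augmentation, regard $z$ inside the $G$-resolution; the linear bound for $G$ yields a $2$-chain $w$ over $\Or$ with $\partial w=z$ and $\|w\|\le C\|z\|$. Any filling of $z$ in the $H$-resolution differs from $w$ by a $2$-cycle over $\Or$, which, as $\cdF(G)\le 2$, lands in the projective module $P_2$; using this one retracts $w$ onto a $2$-chain supported in the $H$-resolution of comparable norm. Hence $\FVol_{(H,\mc F\cap H)}$ is linear, and the converse direction of the characterization gives that $H$ is hyperbolic relative to $\mc F\cap H$.

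\textbf{Main obstacle.}
The crux is the transfer principle, i.e.\ making the last step precise: pushing a filling of a cycle of $H$ --- a priori taken in a resolution of $G$ --- back into a resolution of $H$ at linear cost, using $\cdF(G)\le 2$. This requires (i) a definition of $\FVol_{(G,\mc F)}$ that is independent, up to the equivalence of filling functions, of the choice of finitely generated free partial $\Or$-resolution in the relevant degrees, so that one may fill in whichever resolution is convenient; (ii) control of the restriction functor $\Or\to\OrH$ on norms and on projectivity, the delicate point being that restriction of a finitely generated free module need not be finitely generated --- which is precisely why $FP_{2,\mc F\cap H}$ is assumed of $H$ rather than inherited from $G$; and (iii) treatment of the infinite parabolic vertex stabilizers, so that parabolic cosets do not inflate volume --- this is the role played by $\ZGF$ in place of the constant $G$-module $\Z$. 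Working with partial resolutions rather than with a classifying space for the family also sidesteps the Eilenberg--Ganea-type gap between $\cdF$ and $\gdF$ in dimension $2$. Once (i)--(iii) are in place the argument parallels the absolute case, so I expect the bulk of the effort to lie in establishing the characterization and in the bookkeeping of (i)--(iii).
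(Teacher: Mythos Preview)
Your high-level plan matches the paper's: a characterization of relative hyperbolicity via a relative homological Dehn function $\FV_{G,\mc F}$, plus a subgroup theorem $\FV_{H,\mc F\cap H}\preceq\FV_{G,\mc F}$ when $\cdF(G)\le 2$. Two points, however, are not right as stated.

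\emph{The characterization is not a clean ``if and only if''.} You assert that $(G,\mc F)$ of type $FP_{2,\mc F}$ is relatively hyperbolic iff $\FV_{(G,\mc F)}$ is linear. The paper shows this fails: for $G=\Z\oplus\Z$ with $\mc F$ generated by $\Z\oplus 0$, one has $FP_{2,\mc F}$ and trivial $\FV_{G,\mc F}$, yet $G$ is not hyperbolic relative to $\mc F$. The correct backward implication (Theorem~\ref{thmi:rhypchar0}(1)) requires that $\mc F$ be generated by a finite \emph{almost malnormal} collection. Consequently, in your last line you cannot simply ``apply the converse direction'' to $H$: you must first verify that $\mc F\cap H$ is generated by a finite almost malnormal collection. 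The paper does this explicitly: $FP_{0,\mc F\cap H}$ gives a finite generating set $\mc Q$, and almost malnormality of $\mc Q$ in $H$ is inherited from almost malnormality of the maximal parabolics of $G$. This step is absent from your proposal and is not optional.

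\emph{The transfer argument, as you sketch it, does not work.} You propose to restrict a length-$2$ free $\Or$-resolution $0\to P_2\to P_1\to P_0\to\ZGF\to 0$ to $H$ and compare it with a finitely generated $\OrH$-resolution $Q_\ast$. But the restricted modules $P_i|_H$ are \emph{not} finitely generated, so Lemma~\ref{lem:ProjBounded} gives no bound for a chain map $P_2|_H\to Q_2$; there is no mechanism to push the $G$-filling $w$ back into $Q_2$ at controlled cost. Your sentence about fillings ``differing by a $2$-cycle'' is also vacuous here, since with $0\to P_2\to P_1$ exact the element $w$ is uniquely determined by $z$. The paper avoids this by working geometrically rather than with abstract resolutions: using the existence of a cocompact $E_{\mc F}G$ (Theorem~\ref{thm:dismantlable}) together with $\cdF(G)\le 2$ and Schanuel's lemma, it produces an $F_{1,\mc F}$-graph $\Gamma$ for $G$ with $Z_1(\Gamma,\Or)$ finitely generated and \emph{free}; it then finds an $H$-cocompact subgraph $\Delta\subset\Gamma$ that is $F_{1,\mc F\cap H}$, and obtains a $\ZH$-retraction $Z_1(\Gamma,\Z)\to Z_1(\Delta,\Z)$ from projectivity of $Z_1(\Gamma,\Delta)$. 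The norm comparison (Proposition~\ref{thm:retraction}) then hinges on $Z_1(\Gamma,\Z)$ being an $\mc F$-free $\ZG$-module, which allows one to split off the finitely many $H$-orbits that carry the image of $Z_1(\Delta,\Z)$ and bound the retraction there. This is the content that your ``retract $w$ onto a $2$-chain supported in the $H$-resolution of comparable norm'' is standing in for, and it does not reduce to the purely algebraic picture you describe.
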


Given a group $G$ and a family of subgroups $\mc F$, \emph{a model for $E_{\mc F}G$} is a $G$-complex $X$ such that stabilizers of all the isotropy groups of $X$ belong to $\mc F$, and for every $Q\in \mc F$ the fixed point set $X^Q$ is a non-empty contractible subcomplex of $X$. The geometric dimension $\gdF(G)$ is defined as the smallest dimension of a model for $E_{\mc F}G$. It is known that  \[ \cdF(G) \leq \gdF(G) \leq \max\{3, \cdF(G)\},\] 
see the work of L\"uck and Meintrup~\cite[Theorem 0.1]{LuMe00}. 

Examples of Brady, Leary and Nucinkis~\cite{BrLe01}, and Leary and Fluch~\cite{FlLe12} show that the equality of $\cdF(G)$ and $\gdF(G)$ fails for some non-trivial families. For the case that $\mc F$ is the trivial family, whether this equality holds is the outstanding Eilenberg-Ganea conjecture.   The use of  cohomological dimension instead of geometric dimension in Theorem~\ref{thmi:main} is justified by the unknown Eilenberg-Ganea phenomena for general families.

In practice, we expect most applications of Theorem~\ref{thmi:main} to be in the geometric case $\gdF(G)\leq 2$ which lies outside the abstract framework of Bredon modules.

\subsection{A version of Theorem~\ref{thmi:main} outside the framework of Bredon modules}

By a \emph{toral relatively hyperbolic group} we mean a finitely generated torsion-free group $G$ which is hyperbolic relative to a finite collection $\mc P$ of abelian subgroups. By~\cite[Theorem 1.1]{Os06}, all subgroups in $\mc P$ are finitely generated.

\begin{theorem}\label{cori:main}
Let $G$ be toral relatively hyperbolic group, let $\mc F$ be the family of parabolic subgroups, and suppose  $\gdF(G)\leq 2$. If $H\leq G$ is  finitely presented with finitely many conjugacy classes of maximal parabolic subgroups, then $H$ is toral relatively hyperbolic. 
\end{theorem}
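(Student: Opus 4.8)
The plan is to reduce Theorem~\ref{cori:main} to Theorem~\ref{thmi:main}; the only real work is the translation between the geometric/peripheral hypotheses and conclusion on one side and the Bredon-module statements on the other. Let $\mc F$ be the family of parabolic subgroups of $G$. By hypothesis $\mc F$ is generated by a finite collection $\mc P$ of abelian subgroups, each finitely generated by~\cite[Theorem 1.1]{Os06}, so $G$ is hyperbolic relative to the family $\mc F$ in the sense of the introduction. Since $G$ is torsion-free its only finite subgroup is the trivial one, which is parabolic, so $\mc F$ contains all finite subgroups. Finally $\gdF(G)\leq 2$ forces $\cdF(G)\leq\gdF(G)\leq 2$ by~\cite[Theorem 0.1]{LuMe00}. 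Thus $G$ satisfies the hypotheses of Theorem~\ref{thmi:main}, and it remains only to verify that $H$ is of type $FP_{2,\mc F\cap H}$ and then to unpack the conclusion.

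To organise the peripheral data of $H$, fix representatives $Q_1,\dots,Q_k$ of the finitely many conjugacy classes of maximal parabolic subgroups of $H$. Each $Q_i$ has the form $H\cap P^{g}$ with $P\in\mc P$ and $g\in G$, hence is a subgroup of a finitely generated abelian group, so $Q_i$ is finitely generated abelian. Using almost malnormality of the parabolic structure of $G$, every non-trivial element of $\mc F\cap H$ is contained in an $H$-conjugate of a unique $Q_i$, so $\mc F\cap H$ is precisely the family generated by $\{Q_1,\dots,Q_k\}$.

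The heart of the argument is to show that the pair $(H,\mc F\cap H)$ is of type $FP_{2,\mc F\cap H}$, and I expect this to follow from the finiteness-property machinery of Section~\ref{sec:finiteness} together with the following concrete inputs: $H$ is of type $FP_2$ over $\Z$ because it is finitely presented; and for each $i$ the normalizer $N_H(Q_i)$ equals $Q_i$ (if $x\in H$ normalizes an infinite $Q_i\leq P^g$ then $Q_i\leq P^g\cap (P^g)^x$ is infinite, forcing $(P^g)^x=P^g$ and hence $x\in N_G(P^g)=P^g$, so $x\in P^g\cap H=Q_i$), hence $N_H(Q_i)$ is finitely generated abelian, in particular of type $FP_2$ over $\Z$. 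This passage from classical finiteness of $H$ and of the normalizers of its maximal peripheral subgroups to a Bredon finiteness property of the pair is, I expect, the main obstacle: a priori $H$ is not yet known to be relatively hyperbolic, so one must construct the low-dimensional part of a suitable model for $E_{\mc F\cap H}H$, or the corresponding resolution, by hand rather than quoting a relative-hyperbolicity characterization.

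With $FP_{2,\mc F\cap H}$ established, Theorem~\ref{thmi:main} applies and shows that $H$ is hyperbolic relative to $\mc F\cap H$; by definition there is a finite collection $\mc Q$ of subgroups generating $\mc F\cap H$ such that $H$ is hyperbolic relative to $\mc Q$. Every member of $\mc Q$ lies in $\mc F\cap H$, hence is contained in some conjugate $P^g$ and is therefore finitely generated abelian; moreover $H$ is finitely generated since it is finitely presented, and torsion-free since $G$ is. Therefore $H$ is a toral relatively hyperbolic group, which completes the plan.
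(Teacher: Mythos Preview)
Your outline is correct and follows the paper's own route: reduce to Theorem~\ref{thmi:main}, and the only nontrivial step is establishing that $H$ is $FP_{2,\mc F\cap H}$. However, you have made that step look harder than it is. The ``finiteness-property machinery of Section~\ref{sec:finiteness}'' you point to is precisely Proposition~\ref{propi:fp2sufficient} (Proposition~\ref{prop:F2Fchar}): it says that if a group is finitely presented and the family is generated by a finite \emph{malnormal} collection of finitely generated subgroups, then the coned-off Cayley complex of any finite relative presentation is an $F_{2}$-space for that family, hence the group is $FP_{2}$ over the orbit category. You already have all the inputs for this: $H$ is finitely presented; the $Q_i$ are finitely generated abelian; and since $G$ is torsion-free the almost-malnormality of the peripheral structure becomes genuine malnormality, which (together with your normalizer computation $N_H(Q_i)=Q_i$) shows $\{Q_1,\dots,Q_k\}$ is malnormal in $H$. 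So no ``by hand'' construction beyond Proposition~\ref{prop:F2Fchar} is needed, and the detour through $H$ and $N_H(Q_i)$ being $FP_2$ over $\Z$ is unnecessary. Once you invoke Proposition~\ref{propi:fp2sufficient} explicitly, your argument and the paper's coincide.
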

    
Theorem ~\ref{cori:main} follows from  $\cdF(G) \leq \gdF(G)$ and Proposition~\ref{propi:fp2sufficient} below. Recall that a collection of subgroups $\mc P$ of a group $G$ is  \emph{ malnormal (almost malnormal)}  if for any $P,P'\in \mc P$ and $g\in G$, either $g^{-1}Pg\cap P'$ is  trivial (respectively, finite), or $P=P'$ and $g\in P$.  

\begin{proposition}[Proposition~\ref{prop:F2Fchar}]\label{propi:fp2sufficient} 
If $G$ is finitely presented and $\mc F$ is a family  generated by a  malnormal finite collection of finitely generated subgroups, then $G$ is $FP_{2, \mc F}$.
\end{proposition}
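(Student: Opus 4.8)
The plan is to produce a $2$-dimensional model $X$ for $E_{\mc F}G$ with finitely many $G$-orbits of cells; this gives the conclusion at once, since the Bredon chain complex $\underline C_*(X)$ is then a resolution of $\underline{\mathbb Z}$ by free $\mathcal O_{\mc F}(G)$-modules which is finitely generated in every degree, and in particular through degree $2$.

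Write $\mc P=\{P_1,\dots,P_n\}$ for the finite malnormal collection of finitely generated subgroups generating $\mc F$. First I would take a finite presentation complex of $G$ and let $Y$ be its universal cover: a simply connected $2$-complex on which $G$ acts freely with finitely many orbits of cells. For each $i$, fix a finite generating set $S_i$ of $P_i$ and let $C_i=\mathrm{Cay}(P_i,S_i)$, a connected graph with free cocompact $P_i$-action, together with a $P_i$-equivariant cellular map $C_i\to Y$ (which exists because $Y$ is connected with free $P_i$-action). Now let $X$ be the $G$-CW complex obtained by attaching, for each $i$, the $G$-space $G\times_{P_i}\mathrm{cone}(C_i)$ to $Y$ along the cellular map $G\times_{P_i}C_i\to Y$. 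This is the instance of the L\"uck--Weiermann push-out construction for the pair of families consisting of the trivial family and $\mc F$; malnormality of $\mc P$ is what makes the relevant equivalence relation on $\mc F\setminus\{1\}$ have classes indexed by $\coprod_i P_i\backslash G$, with the normalizer of each class equal to the corresponding $P_i$.

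Then I would verify the three requirements. Finiteness and dimension: $Y$ is a finite $2$-complex mod $G$; each $C_i$ is a graph, so $\mathrm{cone}(C_i)$ is $2$-dimensional, and attaching $G\times_{P_i}\mathrm{cone}(C_i)$ adds only the apex orbit (one orbit of $0$-cells, stabilizer $P_i$) together with one cone cell for each cell of $C_i$ --- finitely many orbits, since $C_i$ is cocompact mod $P_i$, which is exactly the finite generation of $P_i$. So $X$ is a $2$-dimensional $G$-CW complex with finitely many orbits of cells. Cell stabilizers: every cell stabilizer of $X$ is either trivial or a $G$-conjugate of some $P_i$, hence lies in $\mc F$. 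Fixed point sets: since the $G$-action on $Y$ is free, $Y^Q=\emptyset$ for $Q\neq 1$, while $Y$ is simply connected and attaching cones along connected subcomplexes produces no fundamental group (van Kampen), so $X$ is simply connected; and for a nontrivial $Q\in\mc F$, malnormality of $\mc P$ forces $Q$ to lie in a \emph{unique} conjugate $gP_ig^{-1}$, and a direct computation then identifies $X^Q$ with the single apex of the cone indexed by the coset $gP_i$. Hence $X^Q$ is non-empty and contractible for every $Q\in\mc F$, so $X$ is a model for $E_{\mc F}G$.

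With $X$ in hand, $\underline H_j(X)(G/Q)=H_j(X^Q)$ and each $X^Q$ is connected and simply connected, so $\underline H_0(X)\cong\underline{\mathbb Z}$ and $\underline H_1(X)=0$; therefore $\underline C_2(X)\to\underline C_1(X)\to\underline C_0(X)\to\underline{\mathbb Z}\to 0$ is exact with all terms finitely generated free Bredon modules, and $G$ is $FP_{2,\mc F}$. The one place where care is genuinely required --- and the only point where malnormality, rather than mere finite generation or a finite presentation, enters --- is the fixed-point computation: it is malnormality that guarantees that each nontrivial $Q\in\mc F$ ``sees'' exactly one of the attached cones, so that the sets $X^Q$ are connected; without it this construction would yield disconnected fixed-point sets and would fail to model $E_{\mc F}G$.
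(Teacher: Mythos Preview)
Your argument is essentially correct, with one overstatement worth flagging: you announce that $X$ will be a model for $E_{\mc F}G$, but you only verify that $X=X^{\{1\}}$ is simply connected, not contractible. In general it is not: the universal cover $Y$ of a finite presentation $2$--complex can have nontrivial $H_2$, and coning off along graphs does nothing to kill that. Fortunately your final paragraph uses only that each $X^Q$ is non-empty and $1$--connected, which is exactly what you established; so $X$ is an $F_{2,\mc F}$--complex in the sense of the paper's Definition~\ref{def:fn}, and the truncated Bredon chain complex is the required partial free resolution. Just replace the claim ``$X$ is a model for $E_{\mc F}G$'' by ``$X$ is an $F_{2,\mc F}$--complex''.

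The paper follows the same overall strategy---construct a cocompact simply connected $2$--dimensional $G$--complex whose nontrivial fixed-point sets are single points---but realises it with a different and somewhat more direct complex: the coned-off Cayley complex of a finite \emph{relative} presentation $\langle S,\mc P\mid\mc R\rangle$ (Definition~\ref{def:conedoffCC}, Proposition~\ref{prop:F2Fchar}). There one starts from the Cayley graph of $G$ with respect to a finite set $S$, adds a cone vertex for each coset $gP$ joined to every element of $gP$, and attaches $2$--cells along the relators; simple connectedness is proved via the covering $T\to\hat\Gamma$ from the coned-off Cayley tree of the free product (Lemma~\ref{lem:248}). Your construction is the L\"uck--Weiermann pushout of $Y$ with induced cones $G\times_{P_i}\mathrm{cone}(C_i)$. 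Both constructions have trivial $1$--cell stabilizers and a single $G$--orbit of cone points per $P_i$, and in both the key use of malnormality is identical: a nontrivial $Q\in\mc F$ lies in a unique conjugate $gP_ig^{-1}$, so $X^Q$ is exactly the corresponding cone apex. Your route is slightly more modular (it plugs directly into the general pushout machinery and makes the roles of ``$G$ finitely presented'' and ``$P_i$ finitely generated'' transparent), while the paper's coned-off Cayley complex is the object that reappears throughout the relative-hyperbolicity arguments later in the article.
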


\begin{proof}[Proof of Theorem~\ref{cori:main}]
Since $G$ is torsion-free, any two maximal parabolic subgroups of $G$ have either trivial intersection or they are equal~\cite[Lemma 2.2]{MaWi10}. Hence the same statement holds for the maximal parabolic subgroups of $H$. Since all parabolic subgroups of $H$ are finitely generated,  Proposition~\ref{propi:fp2sufficient} implies that $H$ is $FP_{2, \mc F\cap H}$. Then Theorem~\ref{thmi:main} implies that $H$ is toral relatively hyperbolic.
\end{proof}

We raise the following question on the hypothesis of Theorem~\ref{cori:main}, 
 remark on an simple example that might shed some light on the situation, 
and quote a related result.

\begin{question}\label{qu:conjugacymax}
Let $G$ be a toral relatively hyperbolic group,  let $\mc F$ be the family of parabolic subgroups, and suppose  $\cdF(G)\leq 2$.  Suppose that $H$ is finitely presented. Is $H$  toral relatively hyperbolic? 
\end{question}

\begin{remark}\label{ex:Osin} There is a toral relatively hyperbolic $G$ with  $\cdF(G)\leq 2$, where $\mc F$ is the family of parabolic subgroups, and such that $G$ contains a free group of finite rank with infinitely many conjugacy classes of maximal parabolic subgroups.  In particular, $H$ is not hyperbolic relative to $\mc F\cap H$, the group $H$ is not $FP_{2, \mc F\cap H}$ as a consequence of~\cite[Proposition 3.6.1]{Jg11}, but $H$ is hyperbolic and hence toral relatively hyperbolic. 

Indeed, let $H$ be a free group of rank $2$, and let $G$ be hyperbolic free by cyclic group $H \rtimes_\varphi \Z$; for the existence of an automorphism $\varphi\in \aut(H)$ such that the resulting semidirect product is hyperbolic we refer the reader to~\cite{BeFe92}. Let $g$ be a non-trivial element of $H$ that is not a proper power, and let $\mc F$ be the family generated by $P=\langle g \rangle$. Then $G$ is hyperbolic relative to $\mc F$ by \cite[Theorem 7.11]{Bo12}. Since $H$ is normal in $G$, the subgroup $H$ has non-trivial intersection with any conjugate of $P$. Hence $H$ has infinitely many conjugacy classes of maximal parabolic subgroups, and in particular is not hyperbolic relative to $\mc F\cap H$.  It is left to remark that $\gdF(G)\leq 2$. Since $H$ has rank two, $G$ is the fundamental group of a finite $2$-dimensional locally $\cat0$ complex $X$, see~\cite{TBr95}.  Let $\gamma$ be a closed geodesic in $X$ representing the conjugacy class of $g$, and let $Y$ be the space obtained by taking the universal cover $\widetilde X$ of $X$ and coning-off each geodesic of $\widetilde X$ covering $\gamma$. Observe that $G$ acts on $Y$ and that $Y$ is a $2$-dimensional model for $E_{\mc F}G$.
\end{remark}

A $\catm1$ version of Theorem~\ref{cori:main}  was known  by Hanlon and the author~\cite[Theorem 5.15]{HaMa14}. This result addresses Question~\ref{qu:conjugacymax} in a particular geometric setting.  We quote this result in the framework of toral relatively hyperbolic groups, and remark that the statement in the cited reference is slightly more general.

\begin{theorem}\cite[Theorem 5.15]{HaMa14}\label{thmi:towers}
Let $G$ be a toral relatively hyperbolic group, let $\mc F$ be the family of parabolic subgroups, and suppose there is a cocompact $2$-dimensional  $\catm1$ model of $E_{\mc F}G$  with trivial edge stabilizers. If $H\leq G$ is  finitely presented, then $H$ is toral relatively hyperbolic. 
\end{theorem}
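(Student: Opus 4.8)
The plan is to upgrade the (non-cocompact) action of $H$ on the given $\catm1$ model of $E_{\mc F}G$ to a \emph{cocompact} action on a comparable $2$-complex by an equivariant tower argument, and then to recognise $H$ as a Bowditch relatively hyperbolic group straight from Definition~\ref{def:BowRH}. First I would unpack the hypotheses. Write $X$ for the given cocompact $2$-dimensional $\catm1$ model of $E_{\mc F}G$. Since $G$ is torsion-free and $\mc F$ is the family of parabolics, the trivial subgroup lies in $\mc F$, so $X=X^{\{1\}}$ is non-empty and contractible; hence $X$ is a simply connected piecewise-hyperbolic $2$-complex on which $G$, and a fortiori $H$, acts by isometries. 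Triviality of edge stabilisers forces triviality of $2$-cell stabilisers; vertex stabilisers lie in $\mc F$ and are therefore finitely generated abelian subgroups of conjugates of the $P\in\mc P$; and cocompactness gives finitely many isometry types of cells, so edge lengths are bounded below by some $\varepsilon>0$ and every vertex link is a metric graph of systole at least $2\pi$ (the $\catm1$ link condition).

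The core step is the equivariant tower construction. Since $H$ is finitely presented there is a finite connected $2$-complex $L$ with an isomorphism $\pi_1(L)\xrightarrow{\sim}H$ and a combinatorial map $L\to H\backslash X$; an equivariant version of the classical tower argument then yields a simply connected $2$-complex $Y$ carrying a cocompact $H$-action together with an $H$-equivariant combinatorial map $\phi\colon Y\to X$ that is non-degenerate on cells and injective on every vertex link. Producing such a pair $(Y,\phi)$ --- which works already when $H$ is merely of type $FP_2$ --- is the technical heart of the method, and this is the step I expect to be the main obstacle: one must carry out the folds that build $\phi$ without ever losing compactness of the quotient and without destroying injectivity on links, the delicate case being the vertices of infinite (parabolic) stabiliser, where one cannot fold freely within a single orbit; that one is in dimension $2$ is essential here, consistent with Brady's example precluding higher dimensions.

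Granting the tower lemma, the remainder is bookkeeping. Metrise $Y$ by pulling the piecewise-hyperbolic metric back along $\phi$. Then injectivity of $\phi$ on links forces every link of $Y$ to have systole at least $2\pi$ --- a circuit in $\link(v,Y)$ maps to a circuit of the same length in $\link(\phi(v),X)$ --- so $Y$ is locally $\catm1$, hence globally $\catm1$ by Cartan--Hadamard since it is simply connected. Equivariance of $\phi$ gives $\mathrm{Stab}_H(v)\le\mathrm{Stab}_G(\phi(v))$ for every vertex $v$, so edges and $2$-cells of $Y$ have trivial stabilisers while vertex stabilisers are finitely generated abelian subgroups of parabolics of $G$, with only finitely many conjugacy classes of the infinite ones by cocompactness of $H\backslash Y$. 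Triviality of edge stabilisers together with cocompactness also make $Y$ uniformly locally finite along edges: each edge meets only a uniformly bounded number of $2$-cells.

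Finally I would set $\Gamma=Y^{(1)}$ with the combinatorial path metric. It is connected, and since edge lengths are bounded below and $Y$ has bounded geometry the inclusion $\Gamma\hookrightarrow Y$ is a quasi-isometry, so $\Gamma$ is $\delta$-hyperbolic; moreover $H$ acts on $\Gamma$ with trivial edge stabilisers, finitely many orbits of edges, and infinite vertex stabilisers forming a finite collection of finitely generated abelian groups. It remains to check that $\Gamma$ is \emph{fine}: a circuit of length $n$ is null-homotopic in the simply connected $Y$ and so bounds a disk diagram $D\to Y$ with at most $An$ cells (the linear isoperimetric inequality of the hyperbolic space $Y$), and for each of the finitely many combinatorial types of such $D$, a diagram with a prescribed edge mapped to a fixed edge $e$ of $Y$ admits only finitely many extensions over $Y$ --- propagate the map cell by cell, with finitely many choices at each stage by uniform local finiteness along edges --- so only finitely many length-$n$ circuits run through $e$. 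Thus $H$ acts on a connected, fine, hyperbolic graph with trivial edge stabilisers, finitely many orbits of edges, and infinite vertex stabilisers a finite collection of finitely generated abelian subgroups; by Definition~\ref{def:BowRH}, $H$ is hyperbolic relative to that collection, and being finitely generated and torsion-free, $H$ is toral relatively hyperbolic.
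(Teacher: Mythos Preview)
The paper does not prove this theorem; it is quoted from \cite[Theorem 5.15]{HaMa14} as a known result, so there is no ``paper's own proof'' to compare against beyond the cited reference. Your sketch is essentially the strategy of \cite{HaMa14}: build an $H$-equivariant tower $\phi\colon Y\to X$ that is injective on links, pull back the piecewise-hyperbolic metric to make $Y$ locally (hence globally) $\catm1$, and then read off relative hyperbolicity from the $1$-skeleton. You have correctly identified the tower lemma as the load-bearing step, and the rest of your outline is sound.

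One remark on your endgame: rather than proving fineness of $\Gamma=Y^{(1)}$ by hand via disk diagrams and a propagation argument, it is cleaner to note that a cocompact $\catm1$ $2$-complex with trivial edge stabilisers automatically has a fine hyperbolic $1$-skeleton --- this is how \cite{HaMa14} packages the conclusion, and it avoids the need to verify a combinatorial linear isoperimetric inequality in a non-locally-finite complex. Your diagram-counting argument is not wrong, but the step ``finitely many extensions over $Y$'' deserves more care at cone-type vertices of infinite degree: you propagate across edges, not vertices, so local finiteness \emph{along edges} is indeed what you need, and you have it; just make that explicit.
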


\subsection{An application to small cancellation products}
The studies on small cancellation groups by Greendlinger imply that a torsion-free $C'(1/6)$ group has cohomological dimension $2$, see~\cite{ChCoHu81}. An analogous statement holds for all small  cancellation products as defined in~\cite[Chapter V]{LySc01}.

\begin{theorem}[Theorem~\ref{prop:sm}]\label{propi:smallcancelationgd}
Let $G$ be the quotient of a  free product $\ast_{i=1}^m P_i / \nclose{\mc R}$ where $\mc R$ is a finite symmetrized set.   If $\mc R$ is satisfies the $C'(1/6)$-small cancellation hypothesis and $G$ is torsion-free, then $\gdF(G)\leq 2$ where $\mc F$ is the family generated by $\{P_i\colon 1\leq  i\leq m \}$.
\end{theorem}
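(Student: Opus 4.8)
The plan is to build an explicit two-dimensional model for $E_{\mc F}G$, following the classical argument that a torsion-free $C'(1/6)$ group has $\cd=2$ \cite{ChCoHu81}, but replacing the absolute Cayley complex by the development of the relative presentation over the Bass--Serre tree of the free product. Write $F=\ast_{i=1}^m P_i$ and let $N=\nclose{\mc R}$ be the normal closure of $\mc R$ in $F$, so $G=F/N$. Since $\mc R$ satisfies $C'(1/6)$, each $P_i$ embeds in $G$; as $N$ is normal, $N\cap fP_if^{-1}=f(N\cap P_i)f^{-1}=1$ for every $f\in F$. Let $T$ be the Bass--Serre tree of $F=\ast_i P_i$ --- with a center vertex (trivial stabilizer) for each $f\in F$ and a vertex $fP_i$ (stabilizer $fP_if^{-1}$) for each coset, the latter joined to $f$ --- on which $N$ therefore acts freely. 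Then $Y^{(1)}:=T/N$ is a connected graph with $\pi_1 Y^{(1)}\cong N$, carrying a $G$-action with trivial edge stabilizers and with vertex stabilizers the conjugates of the images $P_i$ in $G$ together with trivial groups. Form $Y$ from $Y^{(1)}$ by attaching, $G$-equivariantly, one $2$-cell along each $G$-translate of the closed edge-path spelled by $r$, for every $r\in\mc R$. Because $\mc R$ is finite and symmetrized this adds finitely many $G$-orbits of $2$-cells, so $Y$ is two-dimensional; because $\{frf^{-1}\colon f\in F,\ r\in\mc R\}$ generates $N$, the attached loops normally generate $N$ in $\pi_1Y^{(1)}$ and hence $\pi_1Y=N/N=1$; and because $G$ is torsion-free the a priori cyclic stabilizer of each $2$-cell is trivial. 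Thus $Y$ is a $G$-CW complex --- the action is admissible since $T/N$ is bipartite on vertex type and the $2$-cell stabilizers are trivial --- with every cell stabilizer in $\mc F$.

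I would then show $Y$ is contractible. The complex $Y$ is the Cayley complex of the relative presentation $\langle P_1\ast\cdots\ast P_m\mid\mc R\rangle$, so $\pi_2Y$ is generated by the reduced spherical van Kampen diagrams over this presentation. The $C'(1/6)$ condition, via Greendlinger's Lemma for diagrams over free products \cite[Ch.~V]{LySc01}, forces every such diagram to be trivial, whence $\pi_2Y=0$. A simply connected two-dimensional complex with vanishing $\pi_2$ has trivial reduced homology and is therefore contractible; in particular $Y^{\{1\}}=Y$ is contractible.

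It remains to check the fixed-point condition for a nontrivial $Q\in\mc F$. Such $Q$ is contained in some conjugate $gP_ig^{-1}$ and so fixes the vertex $gP_i$, giving $Y^Q\neq\emptyset$; since $Q$ fixes no edge of $Y^{(1)}$ and no $2$-cell of $Y$, the subcomplex $Y^Q$ is exactly the discrete set of vertices $hP_j$ with $Q\leq hP_jh^{-1}$. Hence $Y^Q$ is a single vertex, and so contractible, precisely when $\{P_i\}$ is malnormal and pairwise separated in $G$: $gP_ig^{-1}\cap hP_jh^{-1}\neq 1$ implies $i=j$ and $g^{-1}h\in P_i$. For $C'(1/6)$ products this is standard: a violation produces a nontrivial reduced essential annular diagram over the presentation, which $C'(1/6)$ excludes (Greendlinger's Lemma for annular diagrams, \cite[Ch.~V]{LySc01}); alternatively one uses that $G$ is hyperbolic relative to $\{P_i\}$ together with torsion-freeness (as in \cite[Lemma 2.2]{MaWi10}). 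Granting this, $Y$ is a two-dimensional model for $E_{\mc F}G$, so $\gdF(G)\leq 2$.

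The main obstacle is the diagrammatic input: identifying the combinatorics of $Y$ with van Kampen diagrams over $\langle\ast_i P_i\mid\mc R\rangle$ and extracting from $C'(1/6)$ the two asphericity statements used above --- no nontrivial reduced spherical diagram (for $\pi_2Y=0$) and no nontrivial reduced essential annular diagram (for malnormality of the factors). Granted these, the remaining verifications --- dimension, cell stabilizers, and fixed-point sets --- are routine, and the hypothesis that $G$ is torsion-free is needed only to trivialize the cyclic stabilizers of the $2$-cells.
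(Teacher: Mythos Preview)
Your approach is essentially the paper's own: your complex $Y$ is exactly the coned-off Cayley complex of the relative presentation (the quotient $T/N$ of the Bass--Serre tree is the coned-off Cayley graph), simple connectivity is the paper's Lemma~\ref{lem:248}, the malnormality of $\{P_i\}$ is what the paper invokes via Remark~\ref{prop:malnormal-3}(3), and asphericity via reducibility of spherical diagrams is precisely the argument the paper carries out (with the planar dual graph and the degree-$\geq 6$ count replacing your one-line appeal to Greendlinger).

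There is one genuine wrinkle in your construction. You attach a $2$-cell for \emph{every} $r\in\mc R$, and $\mc R$ is symmetrized, so each relator class contributes one $2$-cell for every cyclic shift and for the inverse. All of these $2$-cells share the same boundary circle in $Y^{(1)}$, so adjacent pairs bound embedded $2$-spheres and $\pi_2 Y\neq 0$; your $Y$ as written is not contractible. The paper avoids this by passing to a minimal subset $\mc R'\subset\mc R$ with the same normal closure (and uses the torsion-freeness to rule out proper powers, which would otherwise also spoil the diagrammatic argument that distinct $2$-cells meet only along pieces). Once you make that replacement, your outline goes through and coincides with the paper's proof.
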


A group $G$ as in the proposition is hyperbolic relative to  $\{P_i\}_{i=1}^m$, see~\cite[Page 4, (II)]{Os06}. Theorems~\ref{thmi:main} and~\ref{propi:smallcancelationgd} yield Corollary~\ref{cor:c16} below. 
The statement of the corollary for finitely presented $C'(1/6)$-small cancellation groups was known~\cite[Theorem 7.6]{Ge96}.

\begin{corollary}\label{cor:c16}
Let $G$ be a torsion-free quotient $\ast_{i=1}^m P_i / \nclose{\mc R}$ of a  free product $\ast_{i=1}^m P_i$ where $\mc R$ is a finite set satisfying the $C'(1/6)$-small cancellation condition. Let $\mc F$ be the family generated by $\{P_i\}_{i=1}^m$.
 If $H\leq G$ is  $FP_{2, \mc F\cap H}$, then $H$ is hyperbolic relative to $\mc F\cap H$. 
\end{corollary}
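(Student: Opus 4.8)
The plan is to obtain Corollary~\ref{cor:c16} as a direct application of Theorem~\ref{thmi:main}, once the hypotheses of that theorem have been verified for the pair $(G,\mc F)$. So the work reduces to three quick checks: that $G$ is hyperbolic relative to $\mc F$ in the sense used in Theorem~\ref{thmi:main}, that $\mc F$ contains all finite subgroups, and that $\cdF(G)\leq 2$.

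For the first point I would invoke the remark following Theorem~\ref{propi:smallcancelationgd}: a torsion-free $C'(1/6)$-small cancellation product $\ast_{i=1}^m P_i/\nclose{\mc R}$ is hyperbolic relative to the finite collection $\{P_i\}_{i=1}^m$ by~\cite[Page 4, (II)]{Os06}, and since $\mc F$ is the family generated by $\{P_i\}_{i=1}^m$ this is exactly the assertion that $G$ is hyperbolic relative to the family $\mc F$. For the second point, $G$ is torsion-free by hypothesis, so its only finite subgroup is the trivial one, which lies in every family; hence $\mc F$ contains all finite subgroups. For the third point, Theorem~\ref{propi:smallcancelationgd} gives $\gdF(G)\leq 2$ under precisely the torsion-free and $C'(1/6)$ hypotheses of the corollary, and combined with the inequality $\cdF(G)\leq\gdF(G)$ recalled from~\cite[Theorem 0.1]{LuMe00} this yields $\cdF(G)\leq 2$.

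Having checked these, I would conclude: $G$ is hyperbolic relative to a family $\mc F$ containing all finite subgroups, $\cdF(G)\leq 2$, and $H\leq G$ is of type $FP_{2,\mc F\cap H}$ by assumption, so Theorem~\ref{thmi:main} applies and gives that $H$ is hyperbolic relative to $\mc F\cap H$. There is essentially no obstacle here: the corollary is a formal consequence of Theorems~\ref{thmi:main} and~\ref{propi:smallcancelationgd} together with the torsion-free hypothesis; the only care needed is the bookkeeping that matches the phrase ``family generated by $\{P_i\}$'' with the non-standard phrase ``hyperbolic relative to the family $\mc F$'' from the introduction, and that confirms the $FP_2$-type condition assumed on $H$ in the corollary is literally the one demanded by Theorem~\ref{thmi:main}.
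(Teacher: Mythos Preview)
Your proof is correct and matches the paper's approach exactly: the paper states just before the corollary that $G$ is hyperbolic relative to $\{P_i\}$ by~\cite[Page 4, (II)]{Os06} and that ``Theorems~\ref{thmi:main} and~\ref{propi:smallcancelationgd} yield Corollary~\ref{cor:c16},'' and you have simply spelled out the three hypothesis checks (relative hyperbolicity, $\mc F$ containing all finite subgroups via torsion-freeness, and $\cdF(G)\leq\gdF(G)\leq 2$) needed to invoke Theorem~\ref{thmi:main}.
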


\subsection{A characterization of relative hyperbolicity} The proof of Theorem~\ref{thmi:main}, relies on a characterization of relatively hyperbolic groups stated below.  

 The homological Dehn function of a space is a generalized isoperimetric function describing the minimal volume required to fill  cellular $1$--cycles  with cellular $2$--chains, see Section~\ref{sec:InvarianceFVG} for definitions.   The homological Dehn function $\FV_G\colon \N \to \N$ of a finitely presented group $G$ is  the homological Dehn function of the universal cover of a $K(G,1)$ with finite $2$-skeleton;  the growth rate of this function is an invariant of the group~\cite{ABDY, Ge99, Ge96}.  Recall that a pair of functions $f$ and $g$ from $\mathbb N$ to $\mathbb N \cup \{ \infty \}$ have \emph{equivalent growth rate} if $f \preceq g$ and $g \preceq f$, where
$f \preceq g$ means that there exists $0 < C < \infty$ such that  $f(n) \leq C g( Cn + C) + Cn + C$  for all $n \in \mathbb N$.

Gersten observed that one can define $\FV_G$ only assuming that $G$ is an $FP_2$ group, and discovered the following characterization of word-hyperbolic groups.  

\begin{theorem}[Gersten]\label{thm:Gersten}\cite[Theorem 5.2]{Ge96}
A group $G$ is hyperbolic if and only if $G$ is $FP_2$ and $\FV_G(n)$ has linear growth. 
\end{theorem}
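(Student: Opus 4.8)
The statement is an equivalence; the plan is to treat the two implications separately, with essentially all the work in the backward one. For the forward direction: if $G$ is hyperbolic it is finitely presented (hence $FP_2$), so fix a finite presentation with presentation $2$-complex $K$. By Gromov's theorem the ordinary Dehn function $\delta_G$ is linear, and one always has $\FV_G \preceq \delta_G$. Indeed, given a cellular $1$-cycle $z$ in $\widetilde K^{(1)}$, the balanced-degree condition $\partial z = 0$ lets one decompose $z$ into edge-loops of total length at most $\|z\|_1$; fill each loop by a van Kampen diagram over the presentation and push the relative fundamental $2$-chains forward into $\widetilde K$, obtaining a $2$-chain with boundary $z$ and $\ell^1$-norm bounded by the total area, hence $\preceq \delta_G(\|z\|_1)$. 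Thus $\FV_G(n)\preceq n$.

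For the converse, assume $G$ is $FP_2$ with $\FV_G(n)\preceq Kn$. First I would note that $G$ is finitely generated (being $FP_1$); fix a finite generating set $S$ and let $\Gamma=\mathrm{Cay}(G,S)$. Since $G$ is $FP_2$ there is a finitely generated free $\ZG$-module $F_2$ and a map $\partial_2\colon F_2\to C_1(\Gamma)$ making $F_2\xrightarrow{\partial_2}C_1(\Gamma)\xrightarrow{\partial_1}C_0(\Gamma)\to\Z\to 0$ exact. Realizing the generators of $F_2$ by edge-loops of uniformly bounded length and attaching the corresponding finitely many $G$-orbits of $2$-cells to $\Gamma$ yields a locally finite $2$-complex $Y$ with a cocompact $G$-action and $H_1(Y;\Z)=0$, whose homological filling function is equivalent to $\FV_G$. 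The crux is to show that this \emph{linear homological} isoperimetric inequality already forces $\Gamma$ to be Gromov-hyperbolic; then $G$ is hyperbolic (and, a posteriori, finitely presented, which is why the weaker hypothesis $FP_2$ suffices).

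The way I would attempt this is a chain-level version of the classical ``linear isoperimetric inequality $\Rightarrow$ hyperbolic'' argument. A geodesic triangle $T$ in $\Gamma$ of perimeter $n$ determines a $1$-cycle $z_T$ with $\|z_T\|_1=n$, filled by a $2$-chain $c$ with $\|c\|_1\le Kn$; the support of $c$ --- at most $Kn$ two-cells, each of uniformly bounded diameter --- serves as a ``homological van Kampen diagram'' for $T$, and a combinatorial counting / Gauss--Bonnet estimate on $\mathrm{supp}(c)$, using only $\partial c=z_T$ and the uniform cell bounds, should give that $T$ is $\delta$-thin with $\delta=\delta(K,Y)$; uniform thinness of all triangles then yields hyperbolicity. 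The main obstacle --- and the technical heart of the theorem --- is precisely that a $2$-chain is far weaker data than an actual van Kampen diagram: its support may be disconnected, the coefficients are arbitrary integers, and cancellation can occur, so the thin-triangles argument has to be redone entirely at the chain level, carefully controlling how the $1$-skeleton of $\mathrm{supp}(c)$ shadows the sides of $T$. One should also bear in mind that this cannot be shortcut by first upgrading ``$\FV_G$ linear'' to ``$\delta_G$ linear'' and invoking Gromov, since linear homological filling does not in general imply linear homotopical filling; hyperbolicity must be extracted directly from the chain-level estimates --- which is exactly the sort of algebraic/homological filling machinery the present paper develops, in the relative Bredon setting, in the sections that follow.
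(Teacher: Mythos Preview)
The paper does not supply a proof of this theorem at all: it is quoted verbatim from Gersten~\cite[Theorem~5.2]{Ge96} as background. So there is no ``paper's own proof'' to compare against, and you are effectively reconstructing Gersten's argument from scratch.

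That said, your outline is on the right track and matches in spirit what the paper does for the \emph{relative} analogue, Theorem~\ref{thm:rhypchar0}(1). Your forward direction is fine. For the backward direction, you correctly build a cocompact $1$-acyclic $2$-complex $Y$ with $\FV_Y \simeq \FV_G$ (this is exactly Proposition~\ref{prop:lastlemma} in the trivial-family case), and you correctly isolate the crux: passing from a linear \emph{homological} isoperimetric inequality on $Y$ to hyperbolicity of its $1$-skeleton. But your proposed ``chain-level thin triangles'' argument is only a heuristic; as you yourself note, the support of a filling $2$-chain can be disconnected, nonplanar, and involve cancellation, so the usual combinatorial Gauss--Bonnet/counting argument on van Kampen diagrams does not transfer in any obvious way. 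You have identified the gap but not closed it.

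In the paper this step is not reproved either: for the relative version it is black-boxed as Lemma~\ref{cor:suskey} (from~\cite{MP15}), which says precisely that a $1$-acyclic $2$-complex with bounded attaching maps and linear $\FV$ has fine hyperbolic $1$-skeleton. Specializing that lemma to a locally finite cocompact complex recovers Gersten's theorem. So the honest completion of your proposal is to invoke that lemma (or Gersten's original argument, or Mineyev's bounded-cohomology proof) rather than attempt the chain-level thin-triangles argument directly.
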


Given a group $G$ and a family of subgroups $\mc F$, under the assumption that $G$ is $FP_{2,\mc F}$, we define the \emph{homological Dehn function of $G$ relative to $\mc F$} which we denote by $\FV_{G,\mc F}$.  The growth rate of $\FV_{G,\mc F}$ is an invariant of the pair $(G, \mc F)$, see Theorem~\ref{AlgebraicDef}. In the case that $\mc F$ consists of only the trivial subgroup and $G$ is finitely presented (or more generally $FP_2$), the definition of $\FV_{G, \mc F}$ coincides with the homological Dehn function $\FV_G$.  
   
\begin{theorem}\label{thmi:rhypchar0}\label{thm:rhypchar0}
Let $G$ be a group and let $\mc P$ be a finite collection of subgroups generating a family $\mc F$. 
\begin{enumerate}
\item Suppose that $\mc P$ is an almost malnormal collection. 
If $G$ is $FP_{2,\mc F}$ and $\FV_{G, \mc F}(n)$ has linear growth, then $G$ is hyperbolic relative to $\mc P$.
\item Suppose that $\mc F$ contains all finite subgroups of $G$. If $G$ is hyperbolic relative to $\mc P$ then $G$ is $F_{2,\mc F}$ and $\FV_{G, \mc F}(n)$ has linear growth.
\end{enumerate}
\end{theorem}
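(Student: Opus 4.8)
The plan is to follow the template of Gersten's characterization of word-hyperbolic groups (Theorem~\ref{thm:Gersten}), with the presentation $2$--complex of a hyperbolic group replaced by a $G$--CW--complex adapted to the family $\mc F$, and with relative hyperbolicity in the sense of Definition~\ref{def:BowRH} read off from the geometry of its $1$--skeleton. In both directions one works with a $G$--graph rather than with the algebraic definition of $\FV_{G,\mc F}$ directly: the invariance theorem (Theorem~\ref{AlgebraicDef}), together with the finiteness material of Section~\ref{sec:finiteness} and Section~\ref{sec:InvarianceFVG}, lets one compute $\FV_{G,\mc F}$ on the $1$--skeleton of a cocompact-through-dimension-$2$ $G$--complex, so that the linear-growth hypothesis becomes the geometric statement that every $1$--cycle bounds a $2$--chain whose area is linear in its length.

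For part (2), start from a connected, fine, hyperbolic $G$--graph $K$ with finite edge stabilizers, finitely many orbits of edges, and $\mc P$ a set of representatives of the conjugacy classes of infinite vertex stabilizers. Because $\mc F$ contains all finite subgroups, every cell stabilizer of $K$ lies in $\mc F$ (vertex stabilizers are parabolic or finite, edge stabilizers are finite). Fix $L$ large relative to the hyperbolicity constant and attach, $G$--equivariantly, one orbit of $2$--cells along each circuit of length at most $L$; fineness together with cocompactness gives finitely many such orbits, and for $L$ large the resulting complex $X$ satisfies $X^{(1)}=K$ and is simply connected. Indeed, by the standard coning construction for relatively hyperbolic groups one may take $X$ to be the $2$--skeleton of a model for $E_{\mc F}G$ with cocompact $2$--skeleton, so $G$ is $F_{2,\mc F}$. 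To bound $\FV_{G,\mc F}$, fill a circuit $\gamma$ of length $n$ in the $\delta$--hyperbolic graph $K$ by the usual divide-and-conquer argument: iteratively replace a long subpath of the current cycle by a geodesic, paying a $2$--chain supported on circuits of length at most $L$ --- hence of bounded area --- and recurse; the total area is $O(n)$. Thus $\FV_{G,\mc F}(n)\preceq n$, the asserted linear growth.

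For part (1), use that $G$ is $FP_{2,\mc F}$ to produce, following Section~\ref{sec:finiteness}, a $G$--CW--complex $X$ that is cocompact through dimension $2$, has cell stabilizers in $\mc F$, and is ``homologically $1$--acyclic relative to $\mc F$'' --- that is, the low-dimensional part of a relative presentation complex for a finite relative generating set $S$; working homologically in this way sidesteps the possible gap between $FP_{2,\mc F}$ and $F_{2,\mc F}$. Coning off the $\mc F$--fixed subgraphs of $X^{(1)}$ produces, $G$--equivariantly, the coned-off Cayley graph $\gps$, which I write $\widehat\Gamma$: it has finite edge stabilizers and finitely many orbits of edges, and the cone vertices have conjugates of the members of $\mc P$ as stabilizers. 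The linear-growth hypothesis says that every $1$--cycle in $\widehat\Gamma$ bounds a $2$--chain of area linear in its length in $X$. First, $\widehat\Gamma$ is $\delta$--hyperbolic: this is precisely the mechanism behind Theorem~\ref{thm:Gersten}, whose proof I would adapt, the point being that a ``fat geodesic bigon'' in $\widehat\Gamma$ forces a family of $1$--cycles whose area grows faster than their length. Second, $\widehat\Gamma$ is fine, and this is where almost malnormality of $\mc P$ enters: given an edge $e$ and an integer $n$, a circuit of length $n$ through $e$ is filled by a bounded-geometry $2$--disk $D$ of area $O(n)$, and combining $\delta$--hyperbolicity with almost malnormality one shows the cone vertices are ``isolated'' along $D$ --- any two of them are close, or have boundedly overlapping shadows on geodesics of $\widehat\Gamma$, a bounded-coset-penetration phenomenon forced by almost malnormality --- which bounds the number of combinatorial types of $D$, hence the number of such circuits (the operations $\sat$ and $\full$ on the subgraphs spanned by cone vertices are the natural bookkeeping tools here). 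Finally $G$ acts on the connected, fine, hyperbolic graph $\widehat\Gamma$ with finite edge stabilizers, finitely many orbits of edges, and $\mc P$ as representatives of the conjugacy classes of the infinite vertex stabilizers, so by Definition~\ref{def:BowRH} the group $G$ is hyperbolic relative to $\mc P$.

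The main obstacle is the fineness step of part (1): upgrading ``$\widehat\Gamma$ is hyperbolic with linear homological filling'' to ``$\widehat\Gamma$ is fine'' using only almost malnormality of $\mc P$. This requires a quantitative bounded-coset-penetration estimate valid in a purely homological setting --- controlling how the filling chain $D$ meets the cosets $gP$ when $D$ is only known to have bounded geometry and linear area, rather than being an embedded or nonpositively curved disk --- and it requires the invariance results of Section~\ref{sec:InvarianceFVG} to be pushed far enough to replace the algebraic, Bredon-module definition of $\FV_{G,\mc F}$ by a geometric model rigid enough for both the Gersten-type hyperbolicity argument and the circuit count.
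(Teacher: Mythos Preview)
Your overall architecture for part~(1) --- build a cocompact $G$--graph with finite edge stabilizers and the right vertex stabilizers, then verify fineness and hyperbolicity --- matches the paper's. The execution and the role of almost malnormality, however, differ in an important way.

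In the paper, almost malnormality is used \emph{up front}, not at the fineness step. Corollary~\ref{cor:FinEdSt} (which is where almost malnormality enters) produces an $F_{1,\mc F}$--graph $X$ with finite edge stabilizers directly from $FP_{1,\mc F}$ and almost malnormality of $\mc P$; there is no separate coning-off of fixed subgraphs. Proposition~\ref{prop:lastlemma} then attaches finitely many $G$--orbits of $2$--cells to $X$ to obtain a $1$--acyclic complex $Y$ with $\FV_Y\sim\FV_{G,\mc F}$. At this point both fineness and hyperbolicity of $X$ follow in one stroke from the cited Lemma~\ref{cor:suskey} (i.e.\ \cite[Corollary~3.3]{MP15}): for any $1$--acyclic $2$--complex with bounded attaching maps, linear growth of $\FV_Y$ forces the $1$--skeleton to be fine and hyperbolic. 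No Gersten-style fat-bigon argument and no bounded-coset-penetration estimate are needed.

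So the step you flag as the ``main obstacle'' --- deducing fineness from linear homological filling via a quantitative BCP estimate exploiting almost malnormality --- is not how the paper proceeds, and is unnecessarily hard. Fineness does not require almost malnormality at all once you have a $1$--acyclic $2$--complex with bounded boundary lengths and linear $\FV$; almost malnormality is needed only to arrange finite edge stabilizers (which Definition~\ref{def:BowRH} demands). Your route could in principle be made to work, but it trades a clean black box for a delicate coset-geometry argument in a purely homological setting, which you correctly identify as problematic.

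For part~(2), the paper does not give an argument at all: it simply cites \cite[Corollary~1.5]{MaPr16}. Your sketch (cocompact $2$--skeleton of an $E_{\mc F}G$ built by filling short circuits in a fine hyperbolic graph, then linear filling via the usual thin-triangles subdivision) is the right shape of what happens in that reference, but note that establishing $F_{2,\mc F}$ --- in particular, that fixed-point sets of subgroups in $\mc F$ are simply connected, not just the whole complex --- is where the assumption that $\mc F$ contains all finite subgroups does real work, and this is the nontrivial content of the cited result.
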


This result  could be interpreted as an algebraic version of~\cite[Prop. 2.50]{GrMa09} or~\cite[Theorem 1.8]{MP15}. The proof of Theorem~\ref{thmi:rhypchar0}(1) is given in Section~\ref{sec:relhyp}. Theorem~\ref{thmi:rhypchar0}(2) is a result of Przytycki and the author~\cite[Corollary 1.5]{MaPr16}.  

\begin{remark}
The assumption that $\mc P$ is an almost malnormal collection in Theorem~\ref{thmi:rhypchar0}(1)  can not be removed from the statement. Let $G=\Z\oplus \Z$, let $P=\Z\oplus 0$, and let $\mc F$ be the family of subgroups of $P$.  The real line is a cocompact model for $E_{\mc F}G$ when considering the action on which $\Z\oplus 0$ acts trivially and $0\oplus \Z$ acts by translations. In this case, $G$ is $FP_{2, \mc F}$, the function $\FV_{G,\mc F}$ is trivial, and $G$ is not hyperbolic relative to $\mc P$.
\end{remark}

\subsection{The proof of Theorem~\ref{thmi:main}. }

The main step in the proof of Theorem~\ref{thmi:main} is the following subgroup theorem for homological Dehn functions. The proof of Theorem~\ref{thmi:subFV} will be given in Section~\ref{subsec:mainstep}.

\begin{theorem}\label{thm:subFV}\label{thmi:subFV} Let $G$ be a group and $\mc F$ a family of subgroups. Suppose that $G$ admits a cocompact $E_{\mc F}G$  and  $\cdF(G) \leq 2$. If $H \leq G$ is of type $FP_{2, \mc F \cap H}$, then $\FV_{H, \mc F \cap H} \preceq \FV_{G, \mc F}.$
\end{theorem}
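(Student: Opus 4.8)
The plan is to realize both relative homological Dehn functions geometrically and then push filling data from a $G$-complex down to an $H$-complex via an equivariant "transfer" or retraction-type map. Concretely, since $G$ admits a cocompact model $X$ for $E_{\mc F}G$ with $\cdF(G)\le 2$, I would first replace $X$ by (or work with) its $2$-skeleton, which is still cocompact, and whose relative $\FV$ computes $\FV_{G,\mc F}$ up to growth-rate equivalence by the invariance result Theorem~\ref{AlgebraicDef}. The subgroup $H$ acts on $X$, and the quotient $H\backslash X$ need not be finite; the key point of the hypothesis $FP_{2,\mc F\cap H}$ is that it supplies, via the Bredon-module characterization, a "partial" cocompact object — enough of a cocompact $2$-skeleton at the level of chain complexes $C_*^{\mc F\cap H}$ — to define $\FV_{H,\mc F\cap H}$ and to measure fillings. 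So the first substantive step is to set up the two chain complexes over the orbit categories $\Or$ and $\mathcal O_{\mc F\cap H}(H)$, with the $H$-action on the first giving a comparison map $C_*^{\mc F}(X)$ restricted to $H$ versus $C_*^{\mc F\cap H}$.

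The second step is the heart of the argument: given a relative $1$-cycle $z$ in the $H$-complex of norm $\le n$, I would include it into the $G$-complex, fill it there with a relative $2$-chain $c$ of norm $\le \FV_{G,\mc F}(n)$ (up to the $\preceq$ constants), and then apply an $H$-equivariant chain map $\rho_*$ back to the $H$-complex that is the identity (or close to it) on the support of $z$, so that $\rho_*(c)$ fills $z$ in the $H$-complex with controlled norm. The existence of such a $\rho_*$ is exactly where $\cdF(G)\le 2$ is used: in dimension $\le 2$ one can build, over $\ZH$ (or over the relevant Bredon coefficient system), a chain-level retraction or a bounded-propagation map from the cocompact $G$-$2$-complex onto the $H$-complex, because the obstruction to extending an equivariant map over the $2$-skeleton lives in $H^2$, which is governed by the dimension hypothesis; cocompactness of $X$ gives the uniform bound on how much the norm of a chain can grow under $\rho_*$ (a single "coefficient" of $\rho_*$ is supported on boundedly many cells, uniformly, by cocompactness plus the orbit structure). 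This yields $\FV_{H,\mc F\cap H}(n)\le C\,\FV_{G,\mc F}(Cn+C)+Cn+C$, i.e. $\FV_{H,\mc F\cap H}\preceq\FV_{G,\mc F}$.

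The main obstacle I anticipate is precisely the construction of the equivariant chain retraction $\rho_*$ with \emph{uniformly bounded propagation} in the Bredon setting: one must simultaneously (i) handle the fact that $H$-orbits of cells in $X$ are infinite in number, so "cocompact" bounds must be extracted from $G$-cocompactness and then shown to survive restriction to $H$; (ii) keep track of the parabolic coordinates — a relative $1$-cycle records not just a cellular cycle but also "boundary-in-the-parabolics" data, and $\rho_*$ must respect the families $\mc F$ and $\mc F\cap H$ compatibly, which is where the identification $\mc F\cap H=\{K\cap H: K\in\mc F\}$ and the behavior of fixed-point sets $X^{K\cap H}\supseteq X^K$ enter; and (iii) ensure the map is defined on \emph{all} of the $2$-skeleton, not just a fundamental domain, using $\cdF(G)\le 2$ to kill the extension obstruction in degree $2$. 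A secondary technical point is the comparison between the algebraic $\FV_{G,\mc F}$ (defined via $FP_2$ data) and the geometric filling in the cocompact model $X$; I would invoke the invariance statement cited as Theorem~\ref{AlgebraicDef} to pass freely between them. Once $\rho_*$ is in hand with its propagation bound, the inequality on Dehn functions is a bookkeeping exercise with the $\preceq$ constants.
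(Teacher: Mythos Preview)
Your ``include, fill, retract'' strategy matches the paper's, but the step you flag as the main obstacle has a genuine gap, and your proposed resolution fails. The retraction $\rho_*$ you want is only $H$-equivariant and its target carries no $G$-action, while $X$ has infinitely many $H$-orbits of $2$-cells; an arbitrary $\ZH$-linear map out of $C_2(X,\Z)$ (which is \emph{not} finitely generated over $\ZH$) need not be bounded, so $G$-cocompactness by itself does not control $\|\rho_*(c)\|$. Your obstruction-theoretic use of $\cdF(G)\le 2$ is also off: that hypothesis kills Bredon cohomology in degrees $\ge 3$, not an obstruction sitting in degree $2$, so the sentence about ``the obstruction \dots\ lives in $H^2$, which is governed by the dimension hypothesis'' does not go through.

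The paper's fix is to drop one degree, recasting $\FV$ as the distortion of $Z_1$ inside $C_1$ (Remark~\ref{rem:distFV}) so that no map on $2$-chains is ever needed. The real role of $\cdF(G)\le 2$ (via Schanuel's lemma and an Eilenberg trick, Proposition~\ref{prop:gammac}) is to produce an $F_{1,\mc F}$-graph $\Gamma$ for $G$ with $Z_1(\Gamma,\Or)$ finitely generated and \emph{free}, so that $P:=Z_1(\Gamma,\Z)\cong\Z[S]$ for a $G$-set $S$ with isotropy in $\mc F$. An $H$-cocompact subgraph $\Delta\subset\Gamma$ (Proposition~\ref{prop:deltasg}) gives $Q:=Z_1(\Delta,\Z)$, and a $\ZH$-retraction $\rho\colon P\to Q$ exists because $Z_1(\Gamma,\Delta)$ is a projective $\OrH$-module. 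The norm bound then comes from Proposition~\ref{thm:retraction}: since $Q$ is finitely generated over $\ZH$, the inclusion $\imath(Q)\subset P=\Z[S]$ factors through $\Z[S_1]$ for an $H$-invariant $S_1\subset S$ with $S_1/H$ finite; one may take $\rho$ to vanish on $\Z[S\setminus S_1]$, and then $\rho|_{\Z[S_1]}$ is a map between \emph{finitely generated} $\ZH$-modules, hence bounded by Lemma~\ref{lem:ProjBounded}. So one never bounds $\rho$ globally --- only on the finitely generated $\ZH$-summand through which all the relevant cycles pass --- and the $\mc F$-freeness of $P$, which is precisely what $\cdF(G)\le 2$ buys, is what makes that summand available.
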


\begin{remark}
In the case that $\mc F$ is the trivial family, a version of this result is implicit in the work of Gersten~\cite{Ge96}. Another version in the case that $\mc F$ is the trivial family  is~\cite[Theorem 1.1]{HaMa15} which assumes that there is a $2$-dimensional compact $K(G,1)$ and $H$ is finitely presented. Recall that finite presentability is not equivalent to $FP_2$, see~\cite[Example 6(3)]{BeBr97}.  
\end{remark}

\begin{theorem}\cite[Theorem 1.1]{MaPr16}\label{thm:dismantlable}
Let $G$ be a hyperbolic group relative to a family $\mc F$ containing all finite subgroups. Then there is a cocompact model for $E_{\mc F}G$.
\end{theorem}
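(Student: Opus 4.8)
The plan is to build, from the Bowditch data, a $G$-CW-complex $X$ that is cocompact, contractible, and such that $X^{Q}$ is contractible for every $Q\in\mc F$ --- that is, a cocompact model for $E_{\mc F}G$. By Definition~\ref{def:BowRH} the relative hyperbolicity furnishes a connected, fine, hyperbolic $G$-graph with finite edge stabilizers, finitely many orbits of edges, and $\mc P$ representing the conjugacy classes of its infinite vertex stabilizers; fixing a finite generating set $S$ of $G$ one may take this graph to be the coned-off Cayley graph $\widehat\Gamma$, obtained from $\mathrm{Cay}(G,S)$ by adding, for each coset $gP$ with $P\in\mc P$, a cone vertex $v_{gP}$ joined to every element of $gP$. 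What matters about $\widehat\Gamma$ is that it is $\delta$-hyperbolic, that the cone vertices are pairwise non-adjacent with $\mathrm{Stab}(v_{gP})=gPg^{-1}\in\mc F$ (since $\mc F$ contains all finite subgroups), and that $\mathrm{Cay}(G,S)$ is locally finite.

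I would first fix a large parameter $R$ and let $\Gamma_{R}$ be the graph on the vertex set of $\widehat\Gamma$ in which two non-cone vertices are joined whenever their $\mathrm{Cay}(G,S)$-distance is at most $R$, each cone vertex $v_{gP}$ is joined exactly to the elements of $gP$, and no two cone vertices are joined; thus $\Gamma_{R}$ contains $\widehat\Gamma$ and thickens only its non-cone part. Refusing to thicken through cone vertices is the essential point: local finiteness of $\mathrm{Cay}(G,S)$ then keeps $\Gamma_{R}$ $G$-cocompact, whereas allowing shortcuts through a cone vertex would make all its infinitely many neighbours pairwise adjacent. Let $X$ be the flag complex of $\Gamma_{R}$. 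Since cone vertices are pairwise non-adjacent, every simplex of $X$ meets the set of cone vertices in at most one vertex, so $X$ is the Rips complex $P_{R}(\mathrm{Cay}(G,S))$ together with, for each coset $gP$, the cone with apex $v_{gP}$ over the full subcomplex that $gP$ spans in $P_{R}(\mathrm{Cay}(G,S))$. Subsets of diameter at most $R$ inside a coset $gP$ fall into finitely many $gPg^{-1}$-orbits, so each such cone is $\mathrm{Stab}(v_{gP})$-cocompact; hence $X$ is $G$-cocompact, and every cell stabilizer is either finite or contained in some $gPg^{-1}$, so it lies in $\mc F$.

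Next I would show that for $R$ large enough $X$ is contractible and $X^{Q}$ is contractible for every $Q\in\mc F$. Contractibility of $X$ should follow from hyperbolicity of $\widehat\Gamma$ (equivalently, of the Groves--Manning cusped space) by an adaptation of the classical argument that high Rips complexes of hyperbolic spaces are contractible: one cones a given sphere towards a basepoint, and the only obstruction --- a long geodesic that must pass through the infinite link of a cone vertex --- is neutralised by the fact that such a cone vertex together with any $R$-bounded portion of its link spans a simplex of $X$. For $Q\in\mc F$ one first checks $X^{Q}\neq\emptyset$: if $Q\leq gPg^{-1}$ for some $P\in\mc P$ then $Q$ fixes the apex $v_{gP}$, while if $Q$ is finite then, for $R$ chosen large enough (the finiteness built into Definition~\ref{def:BowRH} bounds uniformly the $\mathrm{Cay}(G,S)$-diameter of the orbit of a basepoint under such a $Q$), that orbit spans a simplex of $P_{R}(\mathrm{Cay}(G,S))$ fixed by $Q$. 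One then runs the coning argument $Q$-equivariantly, with this fixed simplex or cone vertex as basepoint, to conclude that $X^{Q}$ is contractible. In \cite{MaPr16} this whole package is organised through a dismantlability property of $X$ --- a well-founded exhaustion by elementary collapses, produced by dismantling $\widehat\Gamma$ inward from its Gromov boundary, which is automatically inherited by each fixed subcomplex. Granting this, $X$ is a cocompact model for $E_{\mc F}G$.

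The main obstacle is exactly the third step: the relative Rips contractibility statement and its equivariant refinement. All the delicate points sit at the cone vertices. One must choose $R$ uniformly across the finitely many $G$-orbits of edges so that every long geodesic capable of obstructing the coning homotopy is detoured through a single cone vertex, and so that the resulting homotopies --- or elementary collapses, in the dismantlability formulation --- restrict correctly to each relevant fixed subcomplex; this requires a quantitative use of the hyperbolicity and fineness of $\widehat\Gamma$ near parabolic points. By contrast, the cocompactness of $X$, the identification of its cell stabilizers, and the non-emptiness of the fixed sets are routine once the Bowditch data has been set up.
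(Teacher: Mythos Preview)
The paper does not supply a proof of this statement: Theorem~\ref{thm:dismantlable} is quoted from \cite[Theorem~1.1]{MaPr16} and invoked as a black box in the proof of Theorem~\ref{thmi:main}. There is therefore nothing in the present paper to compare your proposal against.

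Your sketch does track the construction actually carried out in \cite{MaPr16}: a relative Rips complex built on the coned-off Cayley graph, thickened only away from the cone vertices so as to preserve cocompactness, followed by a dismantlability argument yielding contractibility of all fixed-point sets. You correctly isolate the substantive difficulty and correctly locate it at the cone vertices. One small redundancy worth noting: under the paper's conventions the family $\mc F$ is \emph{generated} by $\mc P$, so the hypothesis that $\mc F$ contains all finite subgroups forces every finite $Q$ to be parabolic; hence your case split for non-emptiness of $X^{Q}$ is unnecessary, since every nontrivial $Q\in\mc F$ already fixes a cone vertex, and the appeal to a uniform bound on orbit diameters (which would not follow from fineness alone) can be dropped. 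Your acknowledgement that the direct Rips-type coning argument is incomplete in the relative setting, and that dismantlability is the device \cite{MaPr16} uses to finish, is accurate.
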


\begin{proof}[Proof of  Theorem~\ref{thmi:main}]
By Theorems~\ref{thm:rhypchar0}(2) and~\ref{thm:dismantlable}, the function   $\FV_{G, \mc F}(n)$ has linear growth, and   that $G$ admits a cocompact $E_{\mc F}G$, respectively.  Theorem~\ref{thmi:subFV} implies that $\FV_{H, \mc F\cap H}  \preceq \FV_{G, \mc F}$, and hence   $\FV_{H, \mc F\cap H}$  has linear growth. Since $H$ is $FP_{0, \mc F\cap H}$, the family $\mc F\cap H$ is generated by a finite collection $\mc Q$ of subgroups~\cite[Prop. 3.6.1]{Jg11}.  Assume that
the collection $\mc Q$ has minimal size; observe that no pair of subgroups of $\mc Q$ are conjugate in $H$. Since the collection of maximal parabolic subgroups of $G$ is almost malnormal~\cite[Lemma 2.2]{MaWi10}, then $\mc Q$ is an almost malnormal collection of subgroups in $H$. By Theorem~\ref{thmi:rhypchar0}(1),    $H$ is hyperbolic relative to $\mc Q$.
\end{proof}

\subsection{Outline.}  The rest of the article is organized as follows.  Section~\ref{sec:preliminaries} contains preliminary material including a brief review of Bredon modules. Section~\ref{sec:finiteness} contains a combinatorial proof that $FP_{1, \mc  F}$ is equivalent to $F_{1, \mc F}$, an expected result for which we could not find a proof in the literature.  Section~\ref{sec:InvarianceFVG} introduces the notion of homological Dehn function $\FV_{G, \mc F}$ for a pair $(G, \mc F)$ and discuss its  algebraic and topological approaches.  Section~\ref{sec:subgroupthm} contains the proof of Theorem~\ref{thmi:subFV}. Section~\ref{sec:relhyp} discuss relative hyperbolicity and contains the proofs of Theorem~\ref{propi:smallcancelationgd} and  Theorem~\ref{thmi:rhypchar0}.

\subsection*{Acknowledgments.} The author thanks Gaelan Hanlon, Tomaz Prytu{\l}a and Mario Velasquez for comments.  
We would also  like to thank Dieter Degrijse for pointing out  an error in an earlier version of the article, Ian Leary for consultation at the \emph{Conference on Finiteness Conditions in Topology and Algebra 2015} and Denis Osin for a conversation suggesting the example in Remark~\ref{ex:Osin} at the 
 \emph{Geometric and Asymptotic Group Theory with Applications Conference 2016}. We specially thank the referees for suggestions, feedback, and pointing out necessary corrections. The author acknowledges funding by the Natural Sciences and Engineering Research Council of Canada, NSERC.
 

\section{Preliminaries }\label{sec:preliminaries}

\subsection{$G$-spaces and $G$-maps}

In this article, all group actions are from the left, all spaces are combinatorial complexes and all maps between complexes are combinatorial.  All group actions on complexes are by combinatorial maps. Moreover, all group actions are assumed to have no inversions;  this means that if a cell is fixed setwise, then it is fixed pointwise.  A $G$-complex $X$ is cocompact  if there are finitely many orbits of cells. For a $G$-complex $X$, the $G$-stabilizer of a cell $\sigma$ is denoted by $G_\sigma$, and the fixed point set of  $H\leq G$ is denoted by $X^H$. Since there are no inversions, the fixed point set $X^H$ is a subcomplex of $X$.  A path $I\to X$ in a complex $X$ is a combinatorial map from an oriented subdivided interval; in particular a path has an initial vertex and a terminal vertex.

\begin{definition}[Combinatorial Complexes and Maps]\cite[Chapter I Appendix]{BrHa99}
 A  map $X\to Y$ between CW-complexes is combinatorial if its restriction to each open cell of $X$ is a homeomorphism onto an open cell of $Y$.   A CW-complex $X$  is combinatorial provided  that  the  attaching  map  of  each  open  cell  is  combinatorial  for  a  suitable subdivision.
\end{definition}

\subsection{Modules over the Orbit Category}

Let $\mc F$ be a family of subgroups of $G$. The \emph{orbit category $\mc{O_F}(G)$} is the small category whose objects are the $G$-spaces of left cosets $G/H$ with $H\in \mc F$  and morphisms are the $G$-maps among them.  The set of morphisms from $G/K$ to $G/L$ is denoted by $[G/K, G/L]$. 
 
The \emph{category of right $\Or$-modules} is the functor category of contravariant functors from $\Or$ into the category of abelian groups $\text{AB}$. More concretely, a \emph{right module over $\Or$} is a contravariant functor $\Or \to \text{AB}$, and a morphism $M\to N$ between right  modules over $\Or$ is a natural transformation from $M$ to  $N$.  For a $\Or$-module $M$ and $H\in \mc F$, we denote by $M[G/H]$ the image of the object $G/H$ by the functor $M$.  The abelian group $M[G/H]$ is called the \emph{component of $M$ at $G/H$}. 
 For the rest of this article by an $\Or$-module we shall mean a right $\Or$-module.  
  
Since the category of $\Or$-modules is a functor category, several constructions of the category of abelian groups  $\text{AB}$ can be carried out in $\Or$--modules by performing them componentwise. As a consequence it is a complete, cocomplete, abelian category with enough projectives.    In particular, computing kernels, images, and  intersections  reduces to componentwise computations in $\text{AB}$. For instance, a sequence $K\to L \to M$ of $\Or$--modules is exact if and only if for every $H\in \mc F$ the sequence $K[G/H]\to L[G/H] \to M[G/H]$ of abelian groups is exact.  

\subsection{Free $\Or$-modules}
 
Let $S$ be a  (left) $G$-set. Consider the  contravariant functor  $\mc{O_F}(G) \overset{S}\to \text{SET}$ that maps the object $G/H$ to the fixedpoint set $S^H$ and maps a  morphism $G/H \overset\varphi\to G/K$ given by $\varphi (H)=gK$ to the function $X^K\to X^H$ given by $x \mapsto g.x$. The \emph{$\Or$-module induced by $S$},  denoted by $\Z[\cdot, S]$, is the composed functor   $\mc{O_F}(G) \overset{s}\to \text{SET} \to \text{AB}$ where the second functor is the free abelian group construction $X\mapsto \Z[X]$. For a subgroup $K\leq G$, the module induced by the $G$-set $G/K$ is denoted by $\Z[\cdot , G/K]$. 

The module $\Z[\cdot, G/G]$ is denoted by $\Z_{G, \mc F}$. Note that each component $\Z[G/K, G/G]$ is an infinite cyclic group with  canonical generator the only element of $G/G$. 

An  $\Or$-module $F$ is \emph{free} if $F$ is isomorphic to $\Z [\cdot ,S]$ for $S$ a $G$-set with isotropy groups in $\mc F$. If, in addition, the $G$-set $S$ has finitely many orbits then $\Z [\cdot ,S]$ is a \emph{finitely generated free $\Or$-module}.

\subsection{Augmentation maps}

Let $S$ be a $G$-set and consider the $\Or$-module $\Z[\cdot,S]$. For $K\in \mc F$,  the component $\Z[G/K, S]$ is by definition the free abelian group $\Z[S^K]$ with $S^K$ as a free basis. 
 The \emph{augmentation map of $\Z[\cdot, S]$} is the morphism $\epsilon\colon \Z[\cdot, S] \to \ZGF$ whose components $\epsilon_{G/K}\colon \Z[S^K] \to \Z$ are defined by mapping each element of $S^K$ to the canonical generator of $\Z[G/K, G/G]$.  The main observation is that $\epsilon$ is surjective if and only if the fixed point set $S^K$ is not empty for every $K\in \mc F$.

\subsection{Schanuel's Lemma}

The following proposition is a well known result of homological algebra that is used several times through the article.

\begin{proposition}[Schanuel's Lemma]\cite[Ch.VIII, 4.2]{Brown}\label{Schanuel} Let 
\[0 \rightarrow P_n \rightarrow P_{n-1} \rightarrow \dots \rightarrow P_0 \rightarrow M \rightarrow 0\] and 
\[0 \rightarrow P_n' \rightarrow P_{n-1}' \rightarrow \dots \rightarrow P_0' \rightarrow M \rightarrow 0\] be exact sequences of $\mc{O_F}(G)$--modules such that $P_i$ and $P_i'$ are projective for $i \leq n-1$. 
\begin{itemize}
\item  If $n$ is even then 
\[P_0 \oplus P_1' \oplus P_2 \oplus P_3' \oplus \dots \oplus P_n \ \large \simeq \ P_0' \oplus P_1 \oplus P_2' \oplus P_3 \oplus \dots \oplus P_n' \]
\item If $n$ is odd then 
\[P_0 \oplus P_1' \oplus P_2 \oplus P_3' \oplus \dots \oplus P_n' \ \large \simeq \ P_0' \oplus P_1 \oplus P_2' \oplus P_3 \oplus \dots \oplus P_n. \]
\end{itemize}
\end{proposition}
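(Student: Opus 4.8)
The plan is to induct on $n$, taking as the engine the classical short form of Schanuel's Lemma, namely the case $n=1$. First I would record that this short form holds in the abelian category of $\Or$-modules: given short exact sequences $0\to K\to P\to M\to 0$ and $0\to K'\to P'\to M\to 0$ with $P$ and $P'$ projective, the pullback $Q$ of $P\to M\leftarrow P'$ sits in short exact sequences $0\to K'\to Q\to P\to 0$ and $0\to K\to Q\to P'\to 0$, both of which split by projectivity of $P$ and of $P'$; hence $K'\oplus P\cong Q\cong K\oplus P'$. Since kernels, pullbacks and the splitting lemma are all computed componentwise in $\text{AB}$, one may equally well verify this one component at a time.

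For the base of the induction: when $n=0$ the hypotheses give $P_0\cong M\cong P_0'$, hence $P_0\cong P_0'$; when $n=1$ the assertion is exactly the short form applied with $K=P_1$ and $K'=P_1'$.

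For the inductive step, suppose $n\ge 2$ and the statement holds for resolutions of length $n-1$. Let $K$ and $K'$ denote the kernels of $P_0\to M$ and of $P_0'\to M$, so that short Schanuel gives $K\oplus P_0'\cong K'\oplus P_0$. Then I would splice: truncating the two given exact sequences at $P_0$ and attaching a projective summand produces exact sequences
\[0\to P_n\to\cdots\to P_2\to P_1\oplus P_0'\to K\oplus P_0'\to 0\]
\[0\to P_n'\to\cdots\to P_2'\to P_1'\oplus P_0\to K'\oplus P_0\to 0\]
of length $n-1$ resolving the isomorphic modules $K\oplus P_0'\cong K'\oplus P_0$. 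Here $P_1\oplus P_0'$ and $P_1'\oplus P_0$ are projective because $1\le n-1$, and $P_i$, $P_i'$ are projective for $2\le i\le n-1$, so the inductive hypothesis applies to these two sequences. Unwinding its conclusion — noting that $n-1$ has parity opposite to that of $n$, so that the ``even'' clause of the hypothesis yields the ``odd'' clause of the claim and vice versa — and then cancelling the auxiliary summands $P_0$ and $P_0'$ and rearranging the direct sums, one arrives at precisely the asserted isomorphism for the length-$n$ resolutions.

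I do not expect a genuine obstacle; the only point needing care is the bookkeeping in the inductive step: tracking which of $P_0$, $P_0'$ is appended to which truncated resolution, and then checking that, after reindexing the length-$(n-1)$ conclusion and moving summands across the isomorphism, the alternating primed/unprimed pattern matches the target in both parity cases. This is worth writing out once explicitly. Everything else — exactness of the spliced sequences, projectivity of finite direct sums, and the splitting lemma — is formal and, if one prefers, reduces to componentwise statements in $\text{AB}$, since the category of $\Or$-modules is abelian with all these constructions performed componentwise.
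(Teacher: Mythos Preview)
The paper does not prove this proposition; it is quoted with a citation to Brown~\cite[Ch.~VIII, 4.2]{Brown} as a standard fact of homological algebra. Your argument is the standard inductive proof and is essentially correct.

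One point to tighten: you write that after applying the inductive hypothesis you must ``cancel the auxiliary summands $P_0$ and $P_0'$.'' Taken literally this would be a problem, since cancellation $A\oplus C\cong B\oplus C\Rightarrow A\cong B$ fails in general module categories. In fact no cancellation is needed. With $Q_0=P_1\oplus P_0'$, $Q_0'=P_1'\oplus P_0$, and $Q_i=P_{i+1}$, $Q_i'=P_{i+1}'$ for $i\ge 1$, the length-$(n-1)$ conclusion reads (say for $n-1$ even)
\[
(P_1\oplus P_0')\oplus P_2'\oplus P_3\oplus\cdots\oplus P_n \;\cong\; (P_1'\oplus P_0)\oplus P_2\oplus P_3'\oplus\cdots\oplus P_n',
\]
which after reordering the summands on each side is exactly the claimed isomorphism for $n$ odd; the $P_0$ and $P_0'$ already sit on the correct sides. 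The even case is identical. So replace ``cancelling'' by ``reordering'' when you write this out, and the bookkeeping goes through cleanly.
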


\subsection{The cellular $\mc{O_F}(G)$-chain complex of a $G$--CW-complex}

Let $G$ be a group, let $\mc F$ be a family of subgroups, and let $X$ be a $G$-complex. These data induce a canonical contravariant functor \begin{equation}\nonumber \mc{O_F}(G)  \longrightarrow \text{TOP} \end{equation} that maps the object $G/H$ to the fixed point set $X^H=\{x\in X \colon\forall h\in H\ h.x=x\}$, and maps a $G$-map $G/H \overset\varphi\to G/K$ given by $\varphi (H)=gK$ to the cellular map $X^K\to X^H$ given by $x \mapsto g.x$. Observe that $X^H$ is a subcomplex of $X$ since we are assuming that our actions are cellular and without inversions.  The composition of the contravariant functor $\mc{O_F}(G)  \longrightarrow \text{TOP}$ with any covariant functor $\text{TOP} \to \text{AB}$ (as a homological functor) is a right $\mc{O_F}(G)$-module.  

 The \emph{augmented cellular $\mc{O_{F}}(G)$--chain complex of $X$}, denoted by $C_*\left(X, \mc{O_{F}}(G)\right)$, is the contravariant functor
\[\begin{array}{ cccccc}
 \mc{O_{F}}(G) & \longrightarrow & \text{TOP} & \longrightarrow  &\text{Chain--Complexes}  \\    \\
 G/H & \mapsto & X^H & \mapsto & C_*(X^H, \Z),
\end{array}
\]
where the second functor is the standard cellular augmented  $\Z$--chain complex functor. 

\begin{remark}\label{rem:FreeChainCpx}
If $X$ is a cocompact $G$--cell-complex such that $X^K$ is $n$-acyclic for every $K\in \mc F$  and all the isotropy groups of $X$ belong to $\mc F$, then  the induced chain complex of $\Or$-modules
\begin{equation}\label{eq:orcc}  C_n(X, \Or) \overset{\partial_n} \longrightarrow \dots \overset{\partial_{2}} \longrightarrow C_1(X, \Or) \overset{\partial_{1}} \longrightarrow C_0(X, \Or) \overset{\epsilon} \longrightarrow \ZGF  \rightarrow 0,\end{equation}
is an exact sequence  of finitely generated free $\Or$-modules.
\end{remark}


\section{Finiteness properties } \label{sec:finiteness}

 Let $G$ be a group and let $\mc F$ be a family of subgroups. 

\begin{definition}  \label{def:fn} 
The group $G$ is of \emph{type $F_{n, \mc F}$} if there is a cocompact $G$--complex $X$ such that all the isotropy groups of $X$ belong to $\mc F$, and for every $K\in \mc F$ the fixed point set $X^K$ is a non-empty and $(n-1)$-connected subcomplex of $X$. A complex $X$ satisfying these conditions is called an \emph{an $F_{n, \mc F}$-complex for $G$}. The group $G$ is of type $F_{\mc F}$ if there is a cocompact model for $E_{\mc F}(G)$.
\end{definition}

\begin{definition}
 \label{def:fpn}
The group $G$ is of \emph{type $FP_{n, \mc F}$} if there is a resolution of $\mc{O_F}(G)$--modules 
\[ \cdots \to P_n \overset{\partial_n} \longrightarrow \dots \overset{\partial_{2}} \longrightarrow P_1 \overset{\partial_{1}} \longrightarrow P_0 \longrightarrow \mathbb Z \rightarrow 0,\] 
such that for $0\leq i\leq n$ the module $P_i$ is  finitely generated and  projective. Such a resolution is called an \emph{$FP_{n, \mc F}$--resolution}.   It is equivalent to require that for $0\leq i\leq n$, the module $P_i$ is finitely generated and free, see for example~\cite[Ch.VIII 4.3]{Brown}. 
\end{definition}

 The main result of the section is the following proposition whose proof, as well as the proof of its corollary, are postponed to  Subsection~\ref{subsec:proofFP1F1}.
 \begin{proposition}\label{prop:FP1Graph} 
If $G$ is $FP_{1, \mc F}$  then $G$ is $F_{1, \mc F}$.
\end{proposition}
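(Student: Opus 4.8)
The plan is to start from an $FP_{1,\mc F}$-resolution and build, in two stages, a cocompact $G$-complex $X$ whose fixed point sets $X^K$ are all non-empty and connected. First I would use the fact (quoted in Definition~\ref{def:fpn}) that an $FP_{1,\mc F}$-resolution can be taken with $P_0$ and $P_1$ finitely generated \emph{free}: so there are finitely generated $G$-sets $S_0$ and $S_1$ with isotropy in $\mc F$ and an exact sequence $\Z[\cdot,S_1]\to \Z[\cdot,S_0]\overset{\epsilon_0}\to \ZGF\to 0$. The surjectivity of the composite $\Z[\cdot,S_0]\to\ZGF$ forces $S_0^K\neq\emptyset$ for every $K\in\mc F$ (by the ``main observation'' about augmentation maps). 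I would take the $0$-skeleton of $X$ to be $S_0$; then $X^{(0)}=S_0$ is a cocompact $G$-set with the right isotropy and with $X^K=S_0^K\neq\emptyset$ for all $K\in\mc F$, which gives $F_{0,\mc F}$ for free and handles non-emptiness.

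The substantive step is attaching finitely many $G$-orbits of edges to make every $X^K$ connected. The generators of $\Z[\cdot,S_0]$ that lie in the kernel of $\epsilon_0$ are $\Z$-linear combinations of differences $[s]-[s']$ of elements of $S_0$ in a common fixed point set; each generator of the free module $\Z[\cdot,S_1]$ maps to such an element, and this data should be converted into a finite list of ``edge types,'' i.e.\ pairs $(s,s')$ with $s,s'\in S_0^{K}$ for the relevant $K$, to be spanned by a $G$-orbit of edges with stabilizer (a subgroup of) $G_s\cap G_{s'}\in\mc F$. After possibly subdividing I may assume each generator of $P_1$ maps to a single difference $[s]-[s']$ rather than a longer combination; more carefully, a combinatorial path in the graph-to-be must be produced, so I would instead argue that the image of $\partial_1$ equals $\ker\epsilon_0$ and that $\ker\epsilon_0$, componentwise at $G/K$, is generated as an abelian group by the ``edge'' differences $[s]-[s']$ with $s,s'$ in the same $G_{?}$-orbit within $S_0^K$ --- exactly the standard fact that the augmentation ideal of a set is generated by such differences --- and then that finitely many $G$-orbits of such pairs suffice because $\Z[\cdot,S_1]$ is finitely generated and the sequence is exact. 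Attaching one edge per representative pair yields a cocompact graph $X$ with isotropy in $\mc F$.

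It remains to check that $X^K$ is connected for every $K\in\mc F$, \emph{not only} for the finitely many $K$ that are stabilizers. Here the key point is that for arbitrary $K\in\mc F$, connectivity of $X^K$ is detected by the component at $G/K$ of the chain complex: $X^K$ is connected iff $H_0$ of its augmented cellular chain complex vanishes iff the component at $G/K$ of $C_1(X,\Or)\to C_0(X,\Or)\to\ZGF\to 0$ is exact at $C_0$. By construction $C_*(X,\Or)$ in low degrees agrees with the truncation $\Z[\cdot,S_1]\to\Z[\cdot,S_0]\to\ZGF$ (the edges realize exactly the chosen generators of $\ker\epsilon_0$, and any extra relations only add cycles, not boundaries we need), which is exact by hypothesis; evaluating at $G/K$ preserves exactness since exactness of $\Or$-modules is componentwise. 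Hence $X$ is an $F_{1,\mc F}$-complex, completing the proof.

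The main obstacle I anticipate is the bookkeeping in the middle paragraph: passing from ``$\ker\epsilon_0$ is the image of a finitely generated free module'' to ``there is a finite $G$-equivariant set of edges making all fixed sets connected'' requires knowing that the augmentation kernel is generated, as an $\Or$-module, by honest edge-differences supported on single fixed point sets --- i.e.\ that one can choose the map $\partial_1$ to have such a ``combinatorial'' form after enlarging $S_1$ by finitely many orbits. This is where a clean combinatorial argument (rather than pure homological algebra) is needed, and it is presumably why the author remarks that a proof ``could not be found in the literature'' despite the result being expected.
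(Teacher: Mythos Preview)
Your overall strategy matches the paper's: take the vertex set to be the $G$-set $V$ underlying the free module in degree~$0$, attach finitely many $G$-orbits of edges guided by the degree-$1$ generators, and verify connectivity of each $X^K$ by a homological computation. But the step you flag as the ``main obstacle'' is a genuine gap, and your proposed fix does not work.

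You want to arrange that each generator $t\in S_1$ maps under $\partial_1$ to a single difference $[s]-[s']$, possibly after enlarging $S_1$. The problem is equivariance: $t$ has stabilizer $G_t\in\mc F$, so $\partial_1(t)$ lies in $\Z[V^{G_t}]$ and is in general a long $G_t$-invariant integer combination $\sum_v n_{t,v}\,v$. Splitting this into single differences $[s]-[s']$ while keeping each piece inside a free $\Or$-summand with isotropy in $\mc F$ is exactly what is not automatic. Relatedly, your claim that ``$C_*(X,\Or)$ in low degrees agrees with the truncation $\Z[\cdot,S_1]\to\Z[\cdot,S_0]\to\ZGF$'' is false: the edges you attach have stabilizers $G_s\cap G_{s'}$, not $G_t$, so $C_1(X,\Or)$ is a different free module from $\Z[\cdot,S_1]$, and you cannot simply evaluate the original resolution componentwise to conclude connectivity.

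The paper resolves both issues with one idea. For each representative $t$, it uses non-emptiness of $V^{G_t}$ to pick a \emph{hub} vertex $\star_t\in V^{G_t}$, and since $\epsilon(\partial_1(t))=0$ one has $\sum_v n_{t,v}=0$, hence $\partial_1(t)=\sum_v n_{t,v}(v-\star_t)$. Each term $v-\star_t$ is now an honest edge-difference, and because $\star_t$ is $G_t$-fixed the $G_t$-action permutes these edges. This produces a finite $G_t$-graph $\Gamma_t$ (a star through $\star_t$) whose boundary realizes $\partial_1(t)$; inducing up to $G$ and assembling over orbit representatives gives the edge set $E$. Connectivity of $X^K$ is then proved not by identifying $C_1(X,\Or)$ with $\Z[\cdot,S_1]$, but by exhibiting a $\Z G$-map $h\colon \Z[T]\to\Z[E]$ with $\partial\circ h=\partial_1$, so that the image of $\partial$ in $\Z[V^K]$ contains the image of $\partial_1$, which is all of $\ker\epsilon$. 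This is the ``clean combinatorial argument'' you were looking for; without the hub trick your outline does not close.
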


\begin{definition}  
 The \emph{family generated by a collection of subgroups $\mc P$ of $G$} consists of all subgroups $K\leq G$ such that that $g^{-1}Kg\leq P$ for some  $g\in G$ and $P\in \mc P$. 
\end{definition}

\begin{corollary}\label{cor:FinEdSt}
If $G$ is $FP_{1, \mc F}$ and  $\mc F$ is a family generated by a finite and almost malnormal collection $\mc P$, then there is an $F_{1, \mc F}$-graph with finite edge   $G$-stabilizers.
\end{corollary}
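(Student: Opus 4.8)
The plan is to deduce Corollary~\ref{cor:FinEdSt} from Proposition~\ref{prop:FP1Graph} by taking an $F_{1,\mc F}$-graph $X$ for $G$ and modifying it so that the edge stabilizers become finite while keeping the defining properties (cocompactness, isotropy in $\mc F$, and nonempty connected fixed-point sets $X^K$ for all $K\in\mc F$). First I would invoke Proposition~\ref{prop:FP1Graph} to get some $F_{1,\mc F}$-graph $X$; a priori its edge stabilizers are merely subgroups in $\mc F$, hence could be large parabolic-type subgroups. The key structural input is the almost malnormality of the finite generating collection $\mc P$: for any two subgroups $P,P'\in\mc P$ and any $g\in G$, the intersection $g^{-1}Pg\cap P'$ is either finite or forces $P=P'$ and $g\in P$. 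Since every $K\in\mc F$ is conjugate into some $P\in\mc P$, almost malnormality controls how the fixed-point sets $X^K$ sit inside one another: if $K\le K'$ are both infinite, they are conjugate into the \emph{same} $P\in\mc P$, and in fact the "maximal" members of $\mc F$ (the conjugates of the $P_i$, up to finite index issues) are pairwise "almost disjoint". This is exactly the rigidity one needs to perform a coning-off / blow-up construction.

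The main construction I would carry out: for each $G$-orbit of a maximal infinite subgroup $P\in\mc P$, the subcomplex $X^P$ is nonempty and connected, and $G$ permutes the translates $\{gX^P : g\in G\}$ with $P$ (or a finite-index overgroup) as the stabilizer of $X^P$. The idea is to build a new graph $Y$ by, roughly, collapsing each such $X^P$ to a single vertex $v_P$ with stabilizer the setwise stabilizer of $X^P$ (which lies in $\mc F$ after possibly enlarging $\mc P$ by finite-index overgroups, or is handled by the almost malnormal bookkeeping), and then declaring the old edges incident to $X^P$ to run to $v_P$. Concretely: take the barycentric-type subdivision or the mapping-cylinder-style graph whose vertices are (old vertices not in any translate $gX^P$) together with one cone vertex per translate $gX^P$, and whose edges are the old edges with endpoints rerouted to cone vertices. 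Almost malnormality guarantees an edge can be incident to at most controlled finitely-many such cone vertices (in the graph case an edge has two endpoints, so this is automatic, but the point is that no edge lies \emph{inside} two distinct translates), so the edge stabilizers of $Y$ are contained in intersections $g^{-1}Pg\cap g'^{-1}P'g'$, which by almost malnormality are finite. Cocompactness of $Y$ follows from cocompactness of $X$ together with finiteness of $\mc P$; the isotropy groups of $Y$ still lie in $\mc F$ (vertex stabilizers are either old vertex stabilizers, in $\mc F$, or setwise stabilizers of $X^P$, which are in $\mc F$ since $\mc F$ is closed under the relevant operations given $\mc P$ is almost malnormal — here one uses that the setwise stabilizer of $X^P$ equals the normalizer-type overgroup which, by almost malnormality with $g=1$, is $P$ itself up to finite index).

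It remains to check that $Y$ is still an $F_{1,\mc F}$-graph, i.e.\ $Y^K$ is nonempty and connected for every $K\in\mc F$. For $K$ infinite, conjugate into a unique maximal $P$, the fixed set $Y^K$ contains the cone vertex $v_{gX^P}$ for the appropriate translate (nonempty), and connectivity follows because $X^K$ was connected and the collapsing map $X\to Y$ is $K$-equivariant and surjective on fixed sets with connected point-preimages after collapsing. For $K$ finite, $Y^K$ can be analyzed from $X^K$ connected together with the fact that collapsing connected subcomplexes preserves connectivity of fixed sets; I would phrase this via a retraction or homotopy argument at the level of the $1$-skeleton. The hard part will be setting up the blow-up/collapse construction so that \emph{both} the finiteness of edge stabilizers and the connectivity of all fixed-point subcomplexes hold simultaneously — in particular, being careful that collapsing the translates $gX^P$ does not accidentally identify points in a way that creates a vertex stabilizer outside $\mc F$ or destroys connectivity of $Y^K$ for some small $K$. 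This is where the precise statement of almost malnormality (including the "or $P=P'$ and $g\in P$" clause, which pins down exactly the setwise stabilizers and overlaps) does the real work; everything else is routine bookkeeping about cocompact $G$-graphs.
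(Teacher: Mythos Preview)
Your approach differs substantially from the paper's and, as written, has a gap. The paper does not take an arbitrary $F_{1,\mc F}$-graph and repair it; instead it revisits the \emph{proof} of Proposition~\ref{prop:FP1Graph}. That construction builds an $F_{1,\mc F}$-graph starting from any partial resolution $\Z[\cdot,T]\to\Z[\cdot,V]\to\ZGF\to 0$, and the graph produced has vertex set exactly $V$, with each edge stabilizer contained in the intersection of the $G$-stabilizers of its two endpoints. The paper simply takes $V=\coprod_{P\in\mc P}G/P$; then every vertex stabilizer is a full conjugate $gPg^{-1}$ with $P\in\mc P$, and for two distinct vertices the intersection of their stabilizers is finite by almost malnormality. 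No collapsing, no post-hoc verification of fixed-set connectivity.

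The gap in your construction is the claim that edge stabilizers of $Y$ land in intersections $g^{-1}Pg\cap g'^{-1}P'g'$ of distinct maximal parabolics. This is clear only for edges joining two distinct cone vertices. A vertex $v$ of $X$ lies in a translate $gX^P$ precisely when $gPg^{-1}\le G_v$, not merely when $G_v\le gPg^{-1}$. Hence a vertex whose stabilizer is an infinite \emph{proper} subgroup of some $hQh^{-1}$ lies in no translate and survives as a non-cone vertex of $Y$ with infinite stabilizer; an edge of $Y$ from such a $v$ to the cone point $c_{hQ}$ then has stabilizer contained in $hQh^{-1}\cap G_v=G_v$, which need not be finite. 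Your argument does not rule this out. The paper's device of choosing the vertex set so that every vertex stabilizer is already a maximal parabolic is exactly what prevents such intermediate infinite stabilizers from ever appearing.
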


The second part of the section will recall the definition of the coned-off Cayley complex of a relative presentation, and observe that these complexes provide $F_{2, \mc F}$-complexes in some cases.

 \subsection{Proof of Proposition~\ref{prop:FP1Graph} } \label{subsec:proofFP1F1}

Take an $FP_{1, \mc F}$--resolution consisting of only free $\Or$-modules, see for example~\cite[Ch.VIII 4.3]{Brown},  
\[\label{eq:freeresolution}  
\Z[\cdot, T]   \overset p \longrightarrow \Z[\cdot, V] \overset\epsilon\longrightarrow \mathbb \ZGF \rightarrow 0\]
where  $\Z[\cdot, V]\overset\epsilon\to \ZGF$ is the augmentation map.  Then $T$ and $V$ are $G$-sets with isotropy groups in $\mc F$ and finitely many $G$-orbits.  For each $K\in \mc F$ the sequence of abelian groups arising by taking the $G/K$-component 
\begin{equation}\nonumber  \Z[T^K]  \overset p \longrightarrow \Z[V^K] \overset\epsilon\longrightarrow \mathbb \Z \rightarrow 0\end{equation}
is exact.  
 
\begin{step}
\label{part1} For each $t\in T$ there is a finite directed $G_t$-graph $\Gamma_t=(V_t, E_t)$ such that $V_t$ is a subset of $V$ and
\begin{enumerate}
\item $\Gamma_t$  is connected when considered as an undirected graph,
\item the $G_t$-action on $V_t$ is the restriction of the $G$-action on $V$,
\item the $G_t$-stabilizer of an edge $e\in E_t$ is the intersection of the $G_t$-stabilizers of its endpoints, and
\item  $\sum_{e\in E_t} (e_+ - e_-)  = p(t) .$
\end{enumerate}
\end{step}
\begin{proof} 
Since $T$ has isotropy groups in $\mc F$, we have that $G_t\in \mc F$ and hence    the sequence 
\begin{equation}\nonumber  \Z[T^{G_t}]  \overset  p \longrightarrow \Z[V^{G_t}] \overset\epsilon \longrightarrow \mathbb \Z \rightarrow 0\end{equation}
is exact. Therefore the fixed point set $V^{G_t}$ is non-empty.  
Choose $\star_t\in V^{G_t}$.  Consider the expression   
 \[ p(t) =\sum_{v\in V} n_{t, v}v,\]
where each $n_{t,v}$ is an integer. Define 
$W_t = \{v \in V \colon n_{t, v}\neq 0\}.$ 
Define 
\[V_t = W_t\cup \{\star_t\},\]
and observe that  $V_t$ is a finite $G_t$-invariant subset of $V$.  The edge set $E_t$ is defined as the following subset of $V_t\times \N$,
\[  E_t = \left\{ v\times j \colon  v\in V_t \text{ and } 1\leq j \leq |n_{t, v}| \ \right\}.\]
For $e=v\times j$ define $e_+=v$ and $e_-=\star_t$ if $n_{t,v}>0$, and 
$e_+=\star_t$ and $e_-=v$ if $n_{t,v}<0$.   Since each edge of $E_t$ has $\star_t$ as one of its endpoints, $\Gamma_t = (V_t, E_t)$ is connected (when considered as a $1$-complex). 
The $G_t$-action on $V_t$ induces an action on $E_t$ by 
\[ g.(v\times j) = g.v \times j\]
respecting the adjacency relation. Since $\epsilon \circ p (t) =0$ and $\epsilon$ is the augmentation map, we have that $\sum_{v\in V_t} n_{t,v} = 0$. Therefore, 
\[ \sum_{e\in E_t} (e_+ - e_-) =
 \sum_{v\in V_t}  n_{t,v} \left(v-\star_t\right) =  p(t). 
\qedhere\]
 \end{proof}

\begin{step}\label{lem:fromHsetstoGset}
Let $K$ be a subgroup of $G$, and let $E$ be a $K$-set. 
Then there is $G$-set $\hat E$ and an injective and $K$-equivariant map $\imath\colon E \hookrightarrow \hat E$ with the following properties: 
\begin{enumerate}
\item The map $\imath$ induces a bijection of orbit spaces $E/K \to \hat E/G$.
\item For each $e\in E$, the $K$-stabilizer $K_e$ equals the $G$-stabilizer $G_{\imath(e)}$.
\item If $V$ is a $G$-set and $f\colon E \to V$ is  $K$-equivariant, then there is a unique $G$-map $\hat f\colon \hat E \to V$ such that $\hat f \circ \imath = f$.
\end{enumerate}
\end{step}
\begin{proof}
 Observe that $E$ is isomorphic to the $K$-space $\coprod_{i\in I} K/K_i$ where $I$ is a complete set of representaives of $K$-orbits of $E$ and $K/K_i$ denotes the $K$-space  of left cosets of $K_i$ in $K$. Define $\hat E$ as $\coprod_{i\in I} G/K_i$ and observe that the assignment $K_i \mapsto K_i$ induces an injective $K$-map $E \hookrightarrow \hat E$. The three statements of the lemma are observations.  
\end{proof}

\begin{step}\label{part2} There is a directed $G$-graph $\Gamma$ with vertex set $V$ with the following properties.
\begin{enumerate}
\item For each $K\in \mc F$, the fixed point set $\Gamma^K$ is a non-empty connected subgraph of $\Gamma$. 
\item $\Gamma$ has finitely many orbits of vertices and edges. 
\item All isotropy groups of $\Gamma$ belong to $\mc F$.
\end{enumerate} 
In particular, as a complex, $\Gamma$ is an $F_{1,\mc F}$-space for $G$.
\end{step}
\begin{proof}
Let $\Gamma$ be the directed $G$-graph whose vertex set is the $G$-set $V$, and edge $G$-set $E$ is defined as follows.

Let $R_T$ be a complete set of representatives of $G$-orbits of $T$. 
For $t\in R_T$, let $\Gamma_t = (V_t, E_t)$ be the directed $G_t$-graph of Step~\ref{part1}. 
Let $\hat E_t$ be the $G$-set provided by Step~\ref{lem:fromHsetstoGset} for the $G_t$-set $E_t$. 
The $G_t$-maps $(-)_t\colon E_t \to V$ and $(+)_t\colon E_t \to V$ defining initial and terminal vertices extend to $G$-maps  $(-)_t\colon \hat E_t \to V$ and $(+)_t\colon \hat E_t \to V$.
Let $E$ be the disjoint union of $G$-sets $E=\coprod_{t\in R_T} \hat E_t$ and let $(+)\colon E \to V$ and $(-)\colon E\to V$ be the induced maps.  Note that $E$ has finitely many $G$-orbits.

By construction, the group $G$ acts on the directed graph $\Gamma=(V,E)$ with finitely many orbits of vertices and edges. Moreover, the $G$-action on $\Gamma$ as a complex has no inversions (if an edge is fixed setwise, the it is fixed pointwise).   
The stabilizer $G_e$ of an edge $e\in E$ is the intersection of two subgroups in $\mc F$, hence $G_e \in \mc F$. It remains to show that the fixed point sets $\Gamma^K$ are non-empty and connected for $K\in\mc F$.

Let $h\colon \Z[T] \to \Z[E]$ be the morphism of $\ZG$-modules given  by $t\mapsto \sum_{e\in E_t} e$ for $t\in R_T$. Note that $h$ is well defined since   the subgroup $G_t$ preserves $E_t$ setwise. Since 
\[ \sum_{e\in E_t} (e_+ - e_-)  = p(t) \]
by definition of $E_t$, it follows that for each $K\in \mc F$ the following diagram commutes
\begin{equation}\label{ea:diagWE} \xymatrix{
\Z[T^K] \ar@{-->}[d]^h  \ar[r]^{p} & \Z[V^K] \ar[r]^{\epsilon} \ar[d]^{Id} & \Z  \ar[r] & 0 \\
 \Z[E^K] \ar[r]^{\partial} & \Z[V^K]   &&}
\end{equation}
where  $\partial$ is the standard boundary map from $1$-chains to $0$-chains of $\Gamma$. Since the top row is an exact sequence, it follows that $H_0(\Gamma^K, \Z) \cong \Z$  and hence $\Gamma^K$ is non-empty and connected.
\end{proof}
 
\begin{proof}[Proof of Corollary~\ref{cor:FinEdSt}]
Consider the $G$-set  $V=\coprod_{P\in \mc P} G/P$.  Observe that  $V/G$ is finite,  every isotropy group of $V$ is in $\mc F$, and for every $K\in \mc F$ the fixed point set $V^K$ is non-empty. The first two properties imply that $\Z[\cdot, V]$ is a finitely generated free $\Or$-module, and the third property implies that the augmentation map $\Z[\cdot , V] \overset \epsilon \to \Z_{G,\mc F}$ is surjective. Since $G$ is $FP_{1, \mc F}$, there is a partial $FP_{1,\mc F}$-resolution $\Z[\cdot, T] \overset p \to \Z[\cdot, V] \overset \epsilon \to \Z_{G,\mc F} \to 0$. Then the proof of Proposition~\ref{prop:FP1Graph} provides an $F_{1,\mc F}$-graph $\Gamma$ with vertex set $V$. Since $\mc P$ is an almost malnormal collection, the intersection of the $G$-stabilizers of two distinct vertices is finite, and hence stabilizers of edges of $\Gamma$ are finite. 
\end{proof}

\subsection{Coned-off Cayley complexes and $F_{2,\mc F}$}

For this subsection, let $G$ be a group, let $\mc P$ be a collection of subgroups of $G$, and let $\mc F$  be the family generated by $\mc P$.

\begin{definition}[Finite relative presentations]\cite{Os06} The group $G$ is \emph{finitely presented relative to $\mc P$ if there is finite subset $S$ of $G$,  and a finite subset $R$ of words over the alphabet $S\sqcup \bigsqcup_{P\in \mc P} P$  such that the natural homomorphism from the  the free product $F=(\ast_{P\in \mc P} P)\ast F(S)$}  into $G$  is surjective and its kernel is normally generated by the elements of $R$ (considered as elements of $F$); here $F(S)$ denotes the free group on the set of letters $S$.  In this case, the data $\langle S, \mc P | \mc R \rangle$ is called a \emph{finite relative presentation of $G$}. 
\end{definition}

\begin{definition}[Cayley graph]
Let $S$ be a subset of $G$ closed under inverses. The \emph{directed Cayley graph} $\Gamma=\Gamma(G, S)$ is  the directed graph with vertex set $G$ and edge set define as follows: for each $g\in G$ and $s\in S$, there is an edge from $g$ to $gs$ labelled $s$.   It is an observation that if $S$ generates $G$ then $\Gamma$ is connected, and that if $S$ is finite then the quotient $\Gamma/G$ is a finite graph. 

The \emph{undirected Cayley graph} is obtained from $\Gamma(G, S)$ by first ignoring the orientation of the edges, and then identifying edges between the same pair vertices. Observe that any combinatorial path in the undirected Cayley graph is completely determined by its starting vertex and a word in the alphabet $S$. If the path is locally embedded, the corresponding word is reduced.  From here on, we will only consider the undirected Cayley graph which we will denote by $\Gamma(G, S)$, and we will call it \emph{the Cayley graph of $G$ with respect to $S$}.
\end{definition}

The definition of coned-off Cayley complex as well as Lemma~\ref{lem:248} below are based on~\cite[Definition 2.47, Lemma 2.48]{GrMa09} respectively. In the cited article, the definition of coned-off Cayley complex has some minor differences, and the constructions assume that each $P\in \mc P$ is finitely generated.  We do not require this last assumption for our arguments

\begin{definition}[Coned-off Cayley Graph and Coned-off Cayley Complex] \label{def:conedoffCC}
Let $\langle S, \mc P |  \mc R \rangle$ be a finite relative presentation of $G$.

The \emph{coned-off Cayley graph $\hat \Gamma$  of $G$ relative to $\mc P$ and $S$} is the $G$-graph obtained from the Cayley graph of $G$ with respect to $S\cup S^{-1}$,  by adding  a new vertex $v(gP)$ for each left coset $gP$ with $g\in G$ and $P\in \mc P$, and edges from $v(gP)$ to each element of $gP$.  The vertices of the form $v(gP)$ are called \emph{cone-vertices}, and any other vertex a \emph{non-cone vertex}. The $G$-action on the cone-vertices is defined using the  $G$-action on the corresponding left cosets, and this action extends naturally to the edge set. 

Observe that each  locally embedded path in $\hat \Gamma$ between elements of $G$ has a unique \emph{label} corresponding to a word in the alphabet $S\sqcup \bigsqcup_{\mc P} P$, since edges between vertices in $G$ have a label in $S\cup S^{-1}$, and for each oriented and embedded path of length two with middle vertex a cone point $v(gP)$ corresponds a unique element of $P$. The label of a path that consists of a single non-cone vertex is defined as the empty word.

The \emph{coned-off Cayley complex $X$} induced by $\langle S, \mc P |  \mc R \rangle$  is the $2$-complex obtained by equivariantly attaching $2$-cells to the coned-off Cayley graph $\hat \Gamma$ as follows.   
Attach a $2$-cell with trivial stabilizer to each loop of $\hat \Gamma$ corresponding to a relator $r \in \mc R$ in a manner equivariant under the $G$-action on $\hat \Gamma$. 
\end{definition}

\begin{lemma} \label{lem:248}
Let $\langle S, \mc P | \mc R \rangle$ be a finite relative presentation of $G$. The   induced coned-off Cayley complex $X$ is simply-connected.
\end{lemma}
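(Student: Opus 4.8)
The plan is to show that the coned-off Cayley complex $X$ is simply-connected by combining connectivity of its $1$-skeleton with van Kampen's theorem applied to the attached $2$-cells. First I would observe that the coned-off Cayley graph $\hat\Gamma$ is connected: it contains the Cayley graph $\Gamma(G, S\cup S^{-1})$, which is connected because $S$ together with $\bigsqcup_{P\in\mc P} P$ generates $G$ (the natural map from $F=(\ast_{P\in\mc P} P)\ast F(S)$ is surjective), and every cone-vertex $v(gP)$ is joined by an edge to $g\in gP$; moreover the Cayley graph of $G$ with respect to $S\cup S^{-1}$ alone need not be connected, so this is precisely where the cone-vertices are used to restore connectivity through the $P$-cosets.

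Next I would compute $\pi_1(X)$ via a presentation read off from the complex. Since $\hat\Gamma$ is connected, $\pi_1(\hat\Gamma)$ is free, and $\pi_1(X)$ is the quotient of $\pi_1(\hat\Gamma)$ by the normal closure of the attaching loops of the $2$-cells, i.e.\ the loops labelled by the relators $r\in\mc R$. The key bookkeeping step is the labelling map: by the discussion in Definition~\ref{def:conedoffCC}, every locally embedded path in $\hat\Gamma$ between elements of $G$ carries a well-defined label which is a word in the alphabet $S\sqcup\bigsqcup_{P\in\mc P}P$, and conversely such a word, read starting from $1\in G$, traces out an edge-path in $\hat\Gamma$. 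I would make precise that a loop in $\hat\Gamma$ based at $1\in G$ is null-homotopic in $\hat\Gamma$ if and only if its label, as an element of the free product $F$, lies in the kernel of $F\to G$ restricted to the subgroup generated by Cayley-graph relations — more carefully, that the loops based at $1$ returning to $1$ correspond, up to homotopy in $\hat\Gamma$, exactly to words in $F$ that map to $1$ in $G$ and are ``trivially so'' (products of conjugates of the defining $P$-relations and edge back-tracking). This identifies $\pi_1(\hat\Gamma)$, based at $1$, with the kernel of $F\to G$ built only from the free-product relations; adjoining the relators $\mc R$ as $2$-cells then kills exactly the normal closure $\nclose{\mc R}$, and since by hypothesis $\mc R$ normally generates this kernel, we get $\pi_1(X)=1$.

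The cleanest way to organize the middle step is probably to set up an explicit deformation: show that any edge-loop in $\hat\Gamma$ based at a non-cone vertex is homotopic (rel basepoint) to one whose label, as a word in $F$, represents the identity of $G$, and then that such a loop bounds in $X$ because its label is a product of conjugates of elements of $\mc R$ (using the $2$-cells) together with free reductions and insertions of trivial $P$-subpaths (which bound bigons in $\hat\Gamma$, hence in $X$). Concretely, a cone-vertex $v(gP)$ together with its incident edges to $gp_1, gp_2$ gives a path of length two labelled $p_1^{-1}p_2 \in P$; two such paths with the same endpoints have labels equal in $P\le F$, so they are homotopic in $\hat\Gamma$ through the star of $v(gP)$. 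This is the mechanism translating ``equal in $F$'' into ``homotopic in $\hat\Gamma$''.

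The main obstacle I anticipate is the careful verification that homotopy classes of loops in $\hat\Gamma$ are faithfully captured by the word-in-$F$ labelling — in particular handling back-tracking through cone-vertices and the fact that distinct cone-vertices $v(gP)$ and $v(g'P')$ can be adjacent to overlapping vertex sets only in controlled ways. Once that dictionary between the combinatorics of paths in $\hat\Gamma$ and the algebra of $F=(\ast_{P\in\mc P}P)\ast F(S)$ is established, the conclusion is immediate from the definition of a finite relative presentation: $\ker(F\to G)=\nclose{\mc R}_F$, which is exactly what the $2$-cells of $X$ kill, so $\pi_1(X)$ is trivial and $X$ is simply-connected.
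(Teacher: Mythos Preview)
Your approach is correct but takes a different route from the paper's. You aim to establish directly a dictionary between homotopy classes of loops in $\hat\Gamma$ based at $1$ and elements of $N=\ker(F\to G)$, by analyzing how the labelling in the alphabet $S\sqcup\bigsqcup_{P}P$ behaves under elementary homotopies (backtracking along $S$-edges, collapsing paths through the star of a cone-vertex). The paper instead packages all of this into a single covering-space observation: the coned-off Cayley graph $T$ of the free product $F=(\ast_{P}P)\ast F(S)$ with respect to $\mc P$ and $S$ is a \emph{tree}, and the quotient map $T\to\hat\Gamma$ by the $N$-action is a regular covering (since $N$ acts freely on $T$, which follows from each factor $P$ embedding in $G$). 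Covering-space theory then gives $\pi_1(\hat\Gamma)\cong N$ immediately, and the attached $2$-cells kill the normal closure of $\mc R$ in $N$, which is all of $N$. Your hands-on verification of the label/homotopy dictionary is exactly what is needed to prove that $T$ is simply-connected and that $T\to\hat\Gamma$ is a covering, so the two arguments are equivalent in content; the paper's formulation simply absorbs the ``main obstacle'' you flag into the single clean statement that $T$ is a tree, which is essentially the normal-form theorem for free products.
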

\begin{proof}
Let $T$ be the coned-off Cayley graph of $F=(\ast_{P\in \mc P} P)\ast F(S)$ with respect to $\mc P$ and $S$. Observe that $T$ is simply-connected since it is a tree. 
Let $\hat \Gamma$ be the coned-off Cayley graph of $G$ with respect to $\mc P$ and $S$.
Observe that  $\hat \Gamma$ is recovered as the quotient $T/N$.  The main observation is that $T \to \hat \Gamma$ is a normal covering space with $N$ as a group of deck transformations. Indeed, the edge stabilizers of $T$ with respect to $N$ are trivial by definition; the vertex stabilizers of $T$ with respect to $N$ are trivial since each factor $P$ of $F$ embeds in $G$ an therefore $N\cap P$ is trivial for every $P\in \mc P$.  It follows that the $1$-skeleton of $X$ has fundamental group isomorphic to $N$, and since $N$ is the normal closure of $\{r_1, \ldots, r_m\}$ in $F$ the definition of $X$ implies that $\pi_1(X)$ is trivial. 
\end{proof}

\begin{proposition}\label{prop:F2Fchar}
Suppose $\mc P$ is a finite and  malnormal collection of subgroups of $G$, and  $\langle S, \mc P |  \mc R \rangle$ is a finite relative presentation of $G$. Then the induced coned-off Cayley complex $X$ is an $F_{2, \mc F}$-space with trivial $1$-cell $G$-stabilizers. In particular, for each $K\in \mc F$ the fixed point set $X^K$ is either a point or $X$. 
\end{proposition}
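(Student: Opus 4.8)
The plan is to read off the cell structure of the coned-off Cayley complex $X$ of Definition~\ref{def:conedoffCC} together with the $G$-stabilizers of its cells, and then to compute the fixed-point subcomplexes $X^K$ by hand for every $K\in\mc F$.

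First I would record the stabilizers. The non-cone vertices are the elements of $G$ and carry trivial stabilizer since $G$ acts on itself by left multiplication; the cone-vertex $v(gP)$ has stabilizer the conjugate $gPg^{-1}$. Each Cayley edge joining $g$ to $gs$ with $s\in S\cup S^{-1}$, and each edge joining a cone-vertex $v(gP)$ to an element of the coset $gP$, has trivial stabilizer --- in the second case because one endpoint is an element of $G$ --- and every $2$-cell has trivial stabilizer by construction. Hence all $1$-cells and $2$-cells have trivial stabilizer, every vertex stabilizer is trivial or a conjugate of some $P\in\mc P$, so all isotropy groups of $X$ lie in $\mc F$; and $X$ is cocompact because $S$, $\mc P$ and $\mc R$ are finite (one orbit of non-cone vertices, one orbit of cone-vertices and one orbit of cone-edges per $P\in\mc P$, $|S\cup S^{-1}|$ orbits of Cayley edges, and $|\mc R|$ orbits of $2$-cells).

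Next I would check the fixed-point condition of Definition~\ref{def:fn} for $n=2$. If $K$ is trivial then $X^K=X$, which is simply connected by Lemma~\ref{lem:248}. If $K\in\mc F$ is nontrivial then, after replacing $K$ by a conjugate and $X^K$ by the corresponding $G$-translate, I may assume $K\le P$ for some $P\in\mc P$, and I claim that $X^K$ is the single vertex $v(P)$. Indeed no $g\in G$ is fixed, since $g\in X^K$ would force $g^{-1}Kg=1$; a cone-vertex $v(gQ)$ is fixed by $K$ exactly when $g^{-1}Kg\le Q$, i.e.\ $K\le P\cap gQg^{-1}$, and since this intersection is nontrivial, malnormality of $\mc P$ (applied to $Q$, $P$ and the element $g^{-1}$) gives $Q=P$ and $g\in P$, so $gQ=P$ and $v(gQ)=v(P)$; finally every edge of $X$ has a non-cone endpoint, so no edge is fixed because the action has no inversions, and no $2$-cell is fixed since $2$-cell stabilizers are trivial. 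Thus $X^K$ is a point, hence contractible and in particular simply connected. This proves that $X$ is an $F_{2,\mc F}$-complex with trivial $1$-cell stabilizers, and the final assertion --- that each $X^K$ is either a point or all of $X$ --- falls out of the two cases.

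The step requiring the most care is the determination of the fixed cone-vertices in the nontrivial case: one must invoke the malnormality hypothesis with exactly the right conjugating element to force $Q=P$ and $g\in P$. The remaining ingredients --- the stabilizer bookkeeping, cocompactness, and the reduction to $K\le P$ --- are routine.
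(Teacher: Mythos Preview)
Your proof is correct and follows the same approach as the paper's: simple connectivity from Lemma~\ref{lem:248}, cocompactness from finiteness of the relative presentation, triviality of $1$- and $2$-cell stabilizers forcing $X^K$ to consist only of vertices when $K$ is nontrivial, and malnormality pinning $X^K$ down to a single cone-vertex. You have simply spelled out in detail the stabilizer bookkeeping and the malnormality argument that the paper leaves implicit.
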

\begin{proof}
The space $X$ is simply-connected by~\ref{lem:248}, and since the relative presentation is finite, the $G$-action on $X$ is cocompact. Let $K\in \mc F$ be a non-trivial subgroup. Since the $G$-stabilizers of $1$-cells and $2$-cells of $X$ are trivial,  the fixed point set $X^K$ consists only of $0$-cells. Since $\mc P$ is a malnormal collection and  $K$ fixes at least one cone-vertex, it follows that $X^K$ is a single cone-vertex of $X$. Therefore $X$ is  an $F_{2, \mc F}$-space with trivial $1$-cell stabilizers.
\end{proof}


\section{The relative homological Dehn function}\label{sec:InvarianceFVG}

Thoughtout this section, $G$ is a group,  $\mc F$ is a family of subgroups, and $G$ is $FP_{2, \mc F}$.

\begin{definition} \label{def:based}
If $F$ is a  free $\ZG$--module with $\ZG-$basis $B$, then $\left \{ gb \colon g \in G , b\in B \right \}$ is a basis for $F$ as a free $\mathbb Z$-module inducing a $G$-equivariant $\ell_1$-norm $\| \cdot \|_1$. 
We call a free $\ZG-$module \emph{based} if it is understood to have a fixed $\ZG$-basis and we use this basis for the induced $\ell_1$--norm $\| \cdot \|_1$. 
\end{definition}

\begin{definition}  \label{def:filling-norm}
Let $\eta\colon F\to M$ be a surjective homomorphism of $\ZG$-modules and suppose that $F$ is free, finitely generated, and based. The \emph{filling  norm on $M$ induced by $\eta$ and the based free module $F$} is the $G$-equivariant norm defined by
\[\| m \|_{\eta} =  \min \left \{ \  \| x \|_1 \colon x \in F , \ \eta(x) = m \right \}.\] 
\end{definition}

\begin{definition}[Algebraic Definition of Relative Homological Filling Function]  \label{def:algFVG}  The \emph{homological filling function} of $G$ relative to $\mc F$, is the  function \[\FV_{G, \mc F}\colon \N \to \N\] defined as follows. 
Let $P_\ast$  be a resolution of $\mc{O_F}(G)$--modules for $\ZGF$ of type $FP_{2, \mc F}$ and let 
\[ P_{2} \overset{\partial_{2}} \longrightarrow P_1 \overset{\partial_{1}} \longrightarrow P_0 \longrightarrow \mathbb Z \rightarrow 0\] 
be the resolution of finitely generated $\ZG$--modules obtained by evaluating $P_\ast$ at $G/1$. Choose filling norms for the $\ZG$--modules  $P_1$ and $P_{2}$,  denoted by $\|\cdot \|_{P_1}$ and $\|\cdot \|_{P_{2}}$ respectively. 
 Then
\[\FV_{G,   \mc F}(k) = \max \left \{ \ \| \gamma \|_{\partial_{2}} \colon \gamma \in \ker(\partial_1), \ \| \gamma \|_{P_1} \leq k \ \right \}\]
where $ \| \cdot \|_{\partial_{2}}$ is the filling norm on $\ker(\partial_1)$ given by 
\[ \| \gamma \|_{\partial_{2}} = \min \left\{  \| \mu\|_{P_{2}} \colon \mu \in P_{2},\  \partial_{2} (\mu) =\gamma \right\} .\]
\end{definition}

\begin{theorem} \label{AlgebraicDef} The growth rate of  $\FV_{G, \mc F}$ is an invariant of the pair $(G, \mc F)$.
\end{theorem}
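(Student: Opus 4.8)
The plan is to show that the growth rate of $\FV_{G,\mc F}$ does not depend on the choices made in Definition~\ref{def:algFVG}: namely (a) the choice of filling norms on the fixed modules $P_1, P_2$, and (b) the choice of the $FP_{2,\mc F}$-resolution $P_\ast$ itself. Step (a) is the easy reduction: any two filling norms on a fixed finitely generated free $\ZG$-module are bi-Lipschitz equivalent. Indeed, if $\eta\colon F\to M$ and $\eta'\colon F'\to M$ are two surjections from based finitely generated free $\ZG$-modules, then each basis element of $F$ has a preimage-free expression, i.e. one can lift each $\ZG$-generator of $F$ along $\eta'$ to an element of $F'$ of some bounded norm, and extend $G$-equivariantly and $\Z$-linearly; this gives a $\ZG$-map $F\to F'$ commuting with the surjections and increasing norm by at most a multiplicative constant $C$ (the max norm of the chosen lifts). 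Symmetrically one gets a constant in the other direction. Hence $\|\cdot\|_\eta$ and $\|\cdot\|_{\eta'}$ on $M$ are comparable up to a multiplicative constant, and replacing the filling norms on $P_1$ and $P_2$ changes $\FV_{G,\mc F}$ only up to the $\preceq$-equivalence (in fact up to bi-Lipschitz equivalence, which is finer).

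The substantive part is independence of the resolution, and here the plan is to invoke Schanuel's Lemma (Proposition~\ref{Schanuel}). Given two $FP_{2,\mc F}$-resolutions $P_\ast$ and $P'_\ast$ of $\ZGF$, truncate each after degree $2$ to get exact sequences $0\to \ker\partial_2 \to P_1 \to P_0 \to \Z \to 0$ and similarly for $P'$. Evaluating at $G/1$ preserves exactness and finite generation and carries free $\Or$-modules to finitely generated free $\ZG$-modules, so we may work entirely with $\ZG$-modules. The strategy is the standard comparison: build explicit $\ZG$-chain maps $f\colon P_\ast \to P'_\ast$ and $g\colon P'_\ast \to P_\ast$ lifting the identity on $\ZGF$ (these exist by projectivity, and one can arrange them between finitely generated free modules so they have finite "norm", i.e. are bounded with respect to the $\ell_1$-norms), together with a chain homotopy $gf \simeq \mathrm{id}$, again of bounded norm since everything is finitely generated. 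A bounded chain map sends a $1$-cycle $\gamma$ of norm $\le k$ in $P_1$ to a $1$-cycle of norm $\le Ck$ in $P'_1$, and a bounded chain homotopy controls the discrepancy between a filling of $\gamma$ in $P_2$ and the pullback via $g$ of a filling of $f(\gamma)$ in $P'_2$. Chasing these bounds through yields $\FV_{G,\mc F}$ computed via $P'_\ast$ dominates (up to $\preceq$) the one computed via $P_\ast$, and symmetrically, giving equivalence of growth rates.

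More carefully, the key estimate runs as follows. Let $\gamma\in\ker(\partial_1\colon P_1\to P_0)$ with $\|\gamma\|_{P_1}\le k$. Then $f_1(\gamma)\in\ker(\partial'_1)$ and $\|f_1(\gamma)\|_{P'_1}\le C_f k$ where $C_f$ bounds $f_1$ on basis elements. Take $\mu'\in P'_2$ with $\partial'_2(\mu')=f_1(\gamma)$ and $\|\mu'\|_{P'_2}$ realizing (or nearly realizing) $\FV_{G,\mc F,P'}(C_f k)$. Then $g_2(\mu')\in P_2$ satisfies $\partial_2(g_2(\mu')) = g_1(\partial'_2\mu') = g_1 f_1(\gamma)$. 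The chain homotopy $h$ with $\partial_2 h_1 + h_0\partial_1 = g_1 f_1 - \mathrm{id}$ on $P_1$ gives, since $\partial_1\gamma=0$, that $\gamma = g_1f_1(\gamma) - \partial_2 h_1(\gamma)$, so $\mu := g_2(\mu') - h_1(\gamma)$ fills $\gamma$, i.e. $\partial_2(\mu)=\gamma$, with $\|\mu\|_{P_2} \le C_g\|\mu'\|_{P'_2} + C_h k$. Therefore $\|\gamma\|_{\partial_2}\le C_g \FV_{G,\mc F,P'}(C_f k) + C_h k$, which is exactly an inequality of the form $\FV_{G,\mc F,P} \preceq \FV_{G,\mc F,P'}$; the reverse follows by symmetry.

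The main obstacle I anticipate is purely bookkeeping: one must be careful that the chain maps and homotopy can indeed be chosen between \emph{finitely generated} free $\ZG$-modules so that "bounded on basis elements" is meaningful, and that passing from $\Or$-modules to $\ZG$-modules via evaluation at $G/1$ behaves well (it is exact and sends finitely generated frees to finitely generated frees, which is standard). There is also a minor point that two $FP_{2,\mc F}$-resolutions may have $P_0, P_1$ finitely generated projective but not free; one replaces them by free ones via the standard trick ($P\oplus Q$ free) cited in Definition~\ref{def:fpn}, which only adds a direct summand on which all boundary maps are trivial and hence does not affect the filling function up to the equivalence. Once these reductions are in place, the norm estimates above are routine and the theorem follows.
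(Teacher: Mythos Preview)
Your approach is essentially the same as the paper's: compare two resolutions via a chain homotopy equivalence, then use boundedness of the resulting maps with respect to filling norms (the paper's Lemma~\ref{lem:ProjBounded}) to push through exactly the estimate you write down in your third paragraph. The explicit inequality you derive is the one the paper proves.

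There is, however, one genuine slip. You assert that evaluation at $G/1$ ``sends finitely generated frees to finitely generated frees,'' and you then propose to build the chain maps $f,g$ and the homotopy $h$ directly at the $\ZG$-level ``by projectivity.'' But the $G/1$-component of the free $\Or$-module $\Z[\cdot, G/K]$ is the permutation module $\Z[G/K]$, which for nontrivial $K\in\mc F$ is \emph{not} a free (or even projective) $\ZG$-module. So after evaluation you no longer have projective resolutions of $\Z$ over $\ZG$, and the standard lifting argument for $f,g,h$ is unavailable there. The paper handles this by constructing the chain maps and the homotopy in the category of $\Or$-modules, where the $P_i$ genuinely are projective, and only \emph{afterwards} taking the $G/1$-component. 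Once you make that correction the evaluated maps are morphisms between finitely generated $\ZG$-modules, Lemma~\ref{lem:ProjBounded} bounds them, and your norm chase goes through verbatim. (Your mention of Schanuel's Lemma is a red herring: you never actually use it, and neither does the paper for this theorem.)
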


The rest of the section is divided into two parts, first the proof of Theorem~\ref{AlgebraicDef}, and then a discussion on a topological approach to $\FV_{G, \mc F}$.

\subsection{Proof of Theorem~\ref{AlgebraicDef}}

The proof in the case that $\mc F$  consists only of the trivial subgroup was proved in~\cite[Theorem  3.5]{HaMa15}. The argument below  is   essentially a  translation of the argument in~\cite[Proof of Theorem  3.5]{HaMa15}  from the category of $\ZG$-modules  to the category of $\Or$-modules up to using Lemma~\ref{lem:ProjBounded} instead of~\cite[Lemma 2.8]{HaMa15}. 

\begin{lemma} \label{EquivStd}\cite[Lemma 4.1]{Ge96}\cite[Lemma 2.9]{HaMa15} Any two filling norms $\| \cdot \|_\eta$ and $\| \cdot \|_\theta$ on a finitely generated $\ZG$--module $M$ are equivalent in the sense that there exists a constant $C\geq 0$ such that $C^{-1} \| m \|_\eta \leq \| m \|_\theta \leq C \| m \|_\eta$ for all $m \in M$.
\end{lemma}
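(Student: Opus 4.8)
The plan is to reduce the statement to the standard fact that two finite generating sets of a finitely generated $\ZG$-module give comparable filling norms, and then to chain the comparisons. Let $\eta\colon F\to M$ and $\theta\colon F'\to M$ be the two surjections from based, finitely generated free $\ZG$-modules defining $\|\cdot\|_\eta$ and $\|\cdot\|_\theta$. First I would fix the $\ZG$-bases $B=\{b_1,\dots,b_p\}$ of $F$ and $B'=\{b'_1,\dots,b'_q\}$ of $F'$. Since $\eta$ is surjective, for each $b'_j$ choose a preimage $x_j\in F$ with $\eta(x_j)=\theta(b'_j)$, and set $C_1=\max_j\|x_j\|_1$. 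Then define a $\ZG$-linear map $\Phi\colon F'\to F$ by $\Phi(b'_j)=x_j$; by construction $\eta\circ\Phi=\theta$, and because $\Phi$ is $\ZG$-linear and sends each basis element to an element of $\ell_1$-norm at most $C_1$, one gets $\|\Phi(y)\|_1\le C_1\|y\|_1$ for all $y\in F'$ (expand $y=\sum_j\lambda_j b'_j$ with $\lambda_j\in\ZG$ and use that multiplication by a group element is an $\ell_1$-isometry, so $\|\lambda_j x_j\|_1\le\|\lambda_j\|_1\,C_1$ since $x_j$ has at most $C_1$ terms each of coefficient contributing at most... more precisely $\|\lambda x_j\|_1\le \|\lambda\|_1\|x_j\|_1$).

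The key estimate follows: given $m\in M$, pick $y\in F'$ with $\theta(y)=m$ and $\|y\|_1=\|m\|_\theta$; then $\Phi(y)\in F$ satisfies $\eta(\Phi(y))=\theta(y)=m$ and $\|\Phi(y)\|_1\le C_1\|y\|_1=C_1\|m\|_\theta$, so $\|m\|_\eta\le C_1\|m\|_\theta$. By the symmetric argument, building a $\ZG$-linear $\Psi\colon F\to F'$ with $\theta\circ\Psi=\eta$ and operator norm bounded by some constant $C_2$, one obtains $\|m\|_\theta\le C_2\|m\|_\eta$. Taking $C=\max\{C_1,C_2\}$ gives $C^{-1}\|m\|_\eta\le\|m\|_\theta\le C\|m\|_\eta$ as required.

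I expect the only genuine point to verify carefully is the submultiplicativity of the $\ell_1$-norm under a $\ZG$-linear map in the sense that $\|\Phi(y)\|_1\le C_1\|y\|_1$, which rests on the two facts that (i) for $g\in G$ and $z\in F$, left multiplication $z\mapsto gz$ preserves $\|\cdot\|_1$ (this is exactly the $G$-equivariance of the norm noted in Definition~\ref{def:based}), and (ii) for $\lambda\in\ZG$ written as a finite $\Z$-combination of group elements and $z\in F$, $\|\lambda z\|_1\le\|\lambda\|_1\|z\|_1$ by the triangle inequality. Everything else is bookkeeping: choosing finitely many preimages, assembling them into a $\ZG$-linear map, and chaining the two inequalities. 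This is the standard argument of Gersten~\cite[Lemma 4.1]{Ge96} and~\cite[Lemma 2.9]{HaMa15}; nothing about the orbit category enters here since the filling norms live on $\ZG$-modules obtained by evaluation at $G/1$.
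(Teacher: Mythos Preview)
The paper does not supply its own proof of this lemma; it simply cites \cite[Lemma 4.1]{Ge96} and \cite[Lemma 2.9]{HaMa15}. Your argument is correct and is precisely the standard one from those references: lift a $\ZG$-basis of $F'$ through $\eta$ to build a $\ZG$-linear $\Phi\colon F'\to F$ over the identity on $M$, bound its operator $\ell_1$-norm by $C_1=\max_j\|x_j\|_1$ using $G$-equivariance of $\|\cdot\|_1$ and the triangle inequality, deduce $\|m\|_\eta\le C_1\|m\|_\theta$, and symmetrize. Indeed the paper itself invokes exactly this mechanism in the proof of Lemma~\ref{lem:ProjBounded}, citing \cite[Lemma 2.7]{HaMa15} for the operator-norm bound on a $\ZG$-linear map between finitely generated based free modules, so your write-up is fully in line with the paper's methodology.
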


\begin{lemma} \label{lem:ProjBounded}
Let $\varphi : P \rightarrow Q$ be a homomorphism between finitely generated $\ZG-$modules. Let $\| \cdot \|_P$ and $\| \cdot \|_Q$ denote filling norms on $P$ and $Q$ respectively. There exists a constant $C>0$ such that $\| \varphi(p) \|_Q \leq C  \| p \|_P$ for all $p \in P$.
\end{lemma}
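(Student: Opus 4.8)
The plan is to prove Lemma~\ref{lem:ProjBounded} by reducing to the behaviour of $\varphi$ on a finite $\ZG$-generating set of $P$, then bounding the image of each generator using the filling norm on $Q$ and using $G$-equivariance to control arbitrary elements.

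First I would fix a finite $\ZG$-basis (or generating set) $b_1,\dots,b_r$ for $P$; since $P$ is finitely generated over $\ZG$ such a set exists, and after choosing it, every $p\in P$ can be written as $p=\sum_{i=1}^r \lambda_i b_i$ with $\lambda_i\in\ZG$. Recall that the filling norm $\|\cdot\|_P$ comes from a surjection $\eta\colon F\to P$ with $F$ finitely generated free and based; pulling back the $b_i$ (or, more cleanly, working with the $\ell_1$-norm determined by the chosen based free module as in Definition~\ref{def:based}), one gets a fixed constant $C_0$ with $\|b_i\|_P\le C_0$ for all $i$, and more importantly the $\ell_1$-norm of $p$ controls the $\ell_1$-norms of the coefficients $\lambda_i$: there is a constant $C_1$ with $\sum_i\|\lambda_i\|_1\le C_1\|p\|_P$ (this uses Lemma~\ref{EquivStd} to pass between the given filling norm and the $\ell_1$-norm coming directly from writing $p$ in the basis $b_1,\dots,b_r$, all of these norms on the finitely generated module $P$ being equivalent). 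Then I would set $C_2=\max_i \|\varphi(b_i)\|_Q$, a finite quantity since there are finitely many generators.

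Next, using $G$-equivariance of the norm $\|\cdot\|_Q$ and of $\varphi$, for a group element $g\in G$ we have $\|g\cdot\varphi(b_i)\|_Q=\|\varphi(b_i)\|_Q\le C_2$, and for a general ring element $\lambda=\sum_g n_g g\in\ZG$ the triangle inequality gives $\|\varphi(\lambda b_i)\|_Q=\|\lambda\cdot\varphi(b_i)\|_Q\le \sum_g |n_g|\,\|g\cdot\varphi(b_i)\|_Q\le C_2\|\lambda\|_1$. Combining, for $p=\sum_i\lambda_i b_i$ we obtain
\[
\|\varphi(p)\|_Q\le\sum_{i=1}^r\|\varphi(\lambda_i b_i)\|_Q\le C_2\sum_{i=1}^r\|\lambda_i\|_1\le C_2 C_1\|p\|_P,
\]
so $C=C_1 C_2$ works.

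I do not expect a serious obstacle here; this is a soft, essentially formal argument. The only point requiring a little care is the comparison between the filling norm $\|\cdot\|_P$ (defined via a surjection from a based free module) and the $\ell_1$-norm obtained by expressing elements in a chosen finite $\ZG$-generating set of $P$, i.e.\ checking that $\sum_i\|\lambda_i\|_1$ is bounded in terms of $\|p\|_P$. This follows because both are filling norms on the finitely generated $\ZG$-module $P$ (the latter being induced by the obvious surjection $\bigoplus_{i=1}^r\ZG\to P$), and Lemma~\ref{EquivStd} asserts that any two filling norms on a finitely generated $\ZG$-module are bi-Lipschitz equivalent. With that comparison in hand the rest is just the triangle inequality and $G$-equivariance.
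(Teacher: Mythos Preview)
Your argument is correct and follows essentially the same idea as the paper's proof: reduce to the free modules underlying the filling norms and use that a $\ZG$-linear map out of a finitely generated based free module is bounded. The paper organizes this by lifting $\varphi$ to $\tilde\varphi\colon A\to B$ between the based free modules defining $\|\cdot\|_P$ and $\|\cdot\|_Q$ and citing an external lemma for the boundedness of $\tilde\varphi$; you instead invoke Lemma~\ref{EquivStd} to replace $\|\cdot\|_P$ by the filling norm coming from $\bigoplus_{i=1}^r\ZG\twoheadrightarrow P$ and then carry out the boundedness computation directly via the triangle inequality and $G$-equivariance of $\|\cdot\|_Q$, which makes the proof self-contained. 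One wording point: the coefficients $\lambda_i$ are not unique when $P$ is not free, so the inequality $\sum_i\|\lambda_i\|_1\le C_1\|p\|_P$ should be stated as the existence of \emph{some} representation $p=\sum_i\lambda_i b_i$ with this bound (which is exactly what the alternate filling norm gives); your final paragraph makes this clear, but the first paragraph could be read otherwise.
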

\begin{proof}
Consider the commutative diagram 
\[\xymatrix{ 
A \ar[r]^{\tilde \varphi} \ar[rd]^\psi \ar[d]_\rho & B \ar[d] \\
P \ar[r]_\varphi    & Q
}\]
constructed as follows.  Let $A$ and $B$ be finitely generated and based free $\ZG$-modules,  and let  $A\to P$ and $B\to Q$ be surjective morphisms inducing the filling norms $\| \cdot \|_P$ and $\| \cdot \|_Q$.   Since $A$ is free and $B\to Q$ is surjective, there is a lifting $\tilde \varphi  \colon A \to B$ of $\varphi$.   Since $\tilde \varphi$ is morphism between finitely generated, free, based $\ZG-$modules, by~\cite[Lemma 2.7]{HaMa15}, there is a constant $C$ such that $\|\tilde \varphi (a)\|_1 \leq C \|a\|_1$ for every $a\in A$. Let $p\in P$ and let $a \in A$ such that $\rho (a) =p$. It follows that 
\begin{equation}\nonumber
 \|\varphi (p)\|_Q  \leq \| \tilde \varphi (a) \|_1  \leq C  \| a \|_1.
\end{equation}
Since the above inequality holds for any $a \in A$ with $\rho (a)=p$, it follows that 
\begin{equation}\nonumber
\begin{split}
  \|\varphi (p)\|_Q & \leq C \cdot \min_{\rho (a)= p} \left\{\|a\|_1  \right \}= C \|p\|_P.  \qedhere  
\end{split}
\end{equation}
\end{proof}

\begin{proof}[Proof of Theorem~\ref{AlgebraicDef}] Let $P_\ast$ and $Q_\ast$  be two resolutions of $\mc{O_F}(G)$--modules for $\ZGF$ of type $FP_{2, \mc F}$. For $i=1,2$, let $\|\cdot \|_{P_i}$ and $\|\cdot \|_{Q_i}$ be filling norms of the components $P_i [G/1]$ and $Q_i[G/1]$ respectively.  Denote by $\FV_{G, \mc F}^P$ and $\FV_{G, \mc F}^Q$ the homological filling functions induced by  $P_\ast$ and $Q_\ast$ respectively.  By symmetry, it is enough to prove that there is $C\geq 0$ such that for every $k\in \N$, 
\begin{equation}\label{eq:preceq}  \FV^{Q}_{G, \mc F}(k) \leq C\cdot \FV^{P}_{G, \mc F} ( Ck + C ) + Ck + C. \end{equation}

Define $C$ as follows. 
Since the category of $\mc{O_F}(G)$-modules  is an abelian category with enough projectives, any two projective resolutions of an $\mc{O_F}(G)$--module are chain homotopy equivalent.   Therefore,  there are chain maps $f_i \colon Q_i \rightarrow P_i, \ g_i\colon P_i \rightarrow Q_i$, and a map $h_i : Q_i \rightarrow Q_{i+1}$ such that 
\[ \partial_{i+1} \circ h_i + h_{i-1} \circ \partial_i = g_i \circ f_i - id.\] 
From here on, we only consider the components at $G/1$ of these resolutions and morphisms, so we are in the category of $\ZG$-modules.   We have two exact sequences of finitely generated $\ZG$-modules
\begin{equation}\nonumber
P_2\overset{\delta_2}\to P_1 \overset{\delta_1}\to P_0 \to \Z \to 0,  \quad  \quad Q_2\overset{\partial_2} \to Q_1 \overset{\partial_1}\to Q_0 \to \Z \to 0
\end{equation}
and the morphisms  $f_i\colon Q_i \to P_i$, $g_i\colon P_i \to Q_i$, and $h_i\colon Q_i \to Q_{i+1}$.  
Let $C$ be the maximum of the constants given by Lemma~\ref{lem:ProjBounded} for $g_{2}$,  $h_1$, and $f_1$ and the  filling-norms provided . 

Inequality~\eqref{eq:preceq} is proved exactly as in~\cite[Proof of Theorem 3.5]{HaMa15} by using Lemma~\ref{lem:ProjBounded} instead of~\cite[Lemma 2.8]{HaMa15}. We include the short argument for completeness.

Fix $k$.  Let $\alpha \in \ker( \partial_1)$ be such that $\| \alpha \|_{Q_1} \leq k$. 
Choose $\beta \in P_{2}$ such that $\delta_{2}(\beta) = f_1 (\alpha )$ and $\|f_1(\alpha) \|_{\delta_{2}} = \| \beta \|_{P_{2}}$. Then
\begin{equation}\nonumber
\begin{split}
\alpha    =  g_1\circ f_1 (\alpha) - \partial_{2} \circ h_1(\alpha) + h_{0} \circ \partial_1 (\alpha)  & =  g_1\circ \delta_2 (\beta) - \partial_{2} \circ h_1(\alpha) \\
		& = \partial_2 \circ g_2   (\beta) - \partial_{2} \circ h_1(\alpha) \\
		& = \partial_2 \left( g_2   (\beta) -  h_1(\alpha) \right)
\end{split}
\end{equation}
It follows that
\begin{align} \nonumber
\| \alpha \|_{\partial_{2}}  \leq \left \| g_{2}(\beta) - h_1(\alpha) \right \|_{Q_2} & \leq \left \| g_{2}(\beta)  \right \|_{Q_{2}} + \left \| h_1(\alpha ) \right \|_{Q_{2}}      \\ \nonumber
& \leq C  \cdot \| \beta \|_{P_2} + C \cdot  \| \alpha \|_{Q_1} \\ \nonumber 
&=  C \cdot \left \| f_1( \alpha ) \right \|_{\delta_{2}} + C \cdot \| \alpha \|_{Q_1}   \\  \nonumber
& \leq C \cdot \FV^{P}_{G, \mc F} ( \| f_1(\alpha ) \|_{P_1} ) + C \| \alpha \|_{Q_1}    \\  \nonumber
& \leq C \cdot  \FV^{P}_{G, \mc F} \left ( C \| \alpha \|_{Q_1} \right) + C  \| \alpha \|_{Q_1}    \\  \nonumber
& \leq C \cdot  \FV^{P}_{G, \mc F} \left ( C k \right) + C  k    
\end{align}  
Since $\alpha$ was arbitrary,   inequality~\eqref{eq:preceq} holds for every $k \in \mathbb N$ completing the proof.
\end{proof}

\subsection{Topological approach to $\FV_{G,\mc F}$}

\begin{definition}\label{def:FVX} Let $X$ be a cell complex.  The \emph{homological Dehn function} of $X$ is the function $\FV_{X} \colon\N \to \N\cup \{\infty\}$ defined as 
\[\FV_X (k) = \max \left \{ \ \| \gamma   \|_{\partial}  \colon \gamma \in Z_1(X, \Z), \ \| \gamma \|_1 \leq k \ \right \},\]
where 
\[ \| \gamma  \|_\partial  = \min \left \{ \ \| \mu \|_1 \colon \mu \in C_{2}(X,\Z), \ \partial ( \mu ) = \gamma \ \right \},\]
and the maximum and minimum of the empty set are defined as zero and $\infty$ respectively. 
\end{definition}

Recall that a cell complex $X$ is $n$-acyclic if its reduced homology groups with integer coefficients $\tilde H_k(X, \Z)$ are trivial for $0\leq k\leq n$. 
  
 \begin{definition}
 \label{def:FH}
The group $G$ is $FH_{n, \mc F}$ if there is a cocompact $G$--complex $X$ such that all isotropy groups of $X$ belong to $\mc F$ and for every  $K\in \mc F$ the fixed point set $X^K$ is a non-empty $(n-1)$-acyclic subcomplex of $X$. Such a complex is called an $FH_{n, \mc F}$-complex.
\end{definition}

\begin{definition}[Topological Definition of Relative Homological Dehn Function]  \label{def:algFVG} Let $X$ be an $FH_{2, \mc F}$-complex for $G$.  The \emph{homological filling function $\FV_{G, \mc F}$} of $G$ relative to $\mc F$, is the (equivalence class of the) homological filling function $\FV_{X}\colon \N \to \N\cup\{\infty\}$.   
\end{definition}

\begin{proposition}\label{cor:algvstop}
Suppose that $G$ is of type $FH_{2, \mc F}$. Then the algebraic and topological definitions of relative homological Dehn function are equivalent. 
\end{proposition}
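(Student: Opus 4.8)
The plan is to show that the two definitions produce functions with equivalent growth rate, via a standard comparison between an algebraic $FP_{2,\mc F}$-resolution and the cellular chain complex of an $FH_{2,\mc F}$-complex. Let $X$ be an $FH_{2,\mc F}$-complex for $G$. By Remark~\ref{rem:FreeChainCpx} applied to the $2$-skeleton (more precisely, since $X^K$ is $1$-acyclic for every $K\in\mc F$ and all isotropy groups lie in $\mc F$), the truncated augmented cellular chain complex
\[
C_2(X,\Or)\overset{\partial_2}\longrightarrow C_1(X,\Or)\overset{\partial_1}\longrightarrow C_0(X,\Or)\overset{\epsilon}\longrightarrow \ZGF\to 0
\]
is an exact sequence of finitely generated free $\Or$-modules, i.e.\ the start of an $FP_{2,\mc F}$-resolution. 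Evaluating at $G/1$ gives a partial free resolution of finitely generated $\ZG$-modules whose $\ell_1$-norms (from the cellular bases) are exactly the norms $\|\cdot\|_1$ appearing in Definition~\ref{def:FVX}; note $Z_1(X,\Z)=\ker\partial_1$ at the $G/1$ level, so $\FV_X$ literally coincides with one instance of the algebraic filling function built from this particular resolution and these particular (cellular) filling norms.

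The remaining point is purely the invariance already established. First I would invoke Theorem~\ref{AlgebraicDef}: the growth rate of $\FV_{G,\mc F}$ computed from the algebraic definition does not depend on the chosen $FP_{2,\mc F}$-resolution of $\ZGF$ or on the chosen filling norms on the degree-$1$ and degree-$2$ modules. Since the cellular chain complex of $X$, evaluated at $G/1$, is such a resolution and the cellular $\ell_1$-norms are filling norms in the sense of Definition~\ref{def:filling-norm} (each $C_i(X,\Or)$ is free and finitely generated, so one may take the free module itself with the identity surjection, which realizes exactly the $\ell_1$-norm), the function $\FV_X$ is a representative of the equivalence class $\FV_{G,\mc F}$ given by Definition~\ref{def:algFVG}. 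Conversely any algebraic representative is equivalent to this one by the same theorem, so the topological Definition~\ref{def:algFVG} and the algebraic Definition~\ref{def:algFVG} define the same growth class.

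Concretely the steps are: (1) from the $FH_{2,\mc F}$-complex $X$ produce, via Remark~\ref{rem:FreeChainCpx}, a partial $FP_{2,\mc F}$-resolution of $\ZGF$ by finitely generated free $\Or$-modules; (2) observe that evaluating at $G/1$ and equipping $C_1,C_2$ with their cellular $\ell_1$-norms makes the algebraic filling function of Definition~\ref{def:algFVG} equal, on the nose, to $\FV_X$ of Definition~\ref{def:FVX}; (3) apply Theorem~\ref{AlgebraicDef} to conclude that this representative has the same growth rate as the one obtained from any other $FP_{2,\mc F}$-resolution and any other filling norms, which is by definition $\FV_{G,\mc F}$. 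The only mild subtlety — the part worth spelling out carefully — is step~(1): one must check that $X$ being $FH_{2,\mc F}$ (fixed point sets $1$-acyclic, cocompact, isotropy in $\mc F$) is exactly what Remark~\ref{rem:FreeChainCpx} requires to guarantee exactness of the truncated chain complex through degree $1$, and that the cellular bases are genuine $\ZG$-bases so that Definition~\ref{def:based} applies and the induced $\ell_1$-norm is the cellular one. There is no real obstacle here; this is a bookkeeping lemma whose content is entirely absorbed by Theorem~\ref{AlgebraicDef}.
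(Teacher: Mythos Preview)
Your proposal is correct and follows essentially the same approach as the paper: build an $FP_{2,\mc F}$-resolution from the cellular chain complex of an $FH_{2,\mc F}$-complex and then invoke Theorem~\ref{AlgebraicDef}. You simply spell out in more detail the identification of the cellular $\ell_1$-norms with filling norms, which the paper leaves implicit.
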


\begin{proof}
Let $X$ be an $FH_{n, \mc F}$-complex for $G$, see Definition~\ref{def:FH}. It follows that induced augmented cellular chain complex of $\Or$-modules $C_\ast (X, \Or)$ is an $FP_{2, \mc F}$ resolution of $\ZGF$.  By Theorem~\ref{AlgebraicDef}, $\FV_X$ and $\FV_{G, \mc F}$ are equivalent.
\end{proof}

\begin{proposition}\label{prop:lastlemma}
Suppose that $G$ is of type $FP_{2, \mc F}$
\begin{enumerate}
\item If  $X$ is an $F_{1, \mc F}$-graph then there is a $1$-acyclic $F_{1, \mc F}$-space $Y$ with $1$-skeleton $X$ and finitely many $G$-orbits of $2$-dimensional cells.
\item If $Y$ is a $1$-acyclic $F_{1, \mc F}$-space then $\FV_{G, \mc F}$ and $\FV_{Y}$ are equivalent.
\end{enumerate}
\end{proposition}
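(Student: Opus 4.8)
The plan is to prove the two parts in sequence, using the algebraic definition of $\FV_{G,\mc F}$ (Definition~\ref{def:algFVG}) together with the invariance Theorem~\ref{AlgebraicDef} as the bridge between the combinatorial construction and the cohomological data.

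For part (1), I would start from a partial $FP_{2,\mc F}$-resolution obtained from the hypothesis that $G$ is $FP_{2,\mc F}$. Since $X$ is an $F_{1,\mc F}$-graph, the augmented cellular chain complex $C_\ast(X,\Or)$ gives, after evaluating at components, a beginning of a free resolution $C_1(X,\Or)\overset{\partial_1}\to C_0(X,\Or)\overset{\epsilon}\to\ZGF\to 0$ which is exact (each fixed point set $X^K$ is non-empty and connected, so $\tilde H_0(X^K,\Z)=0$). Because $G$ is $FP_{2,\mc F}$, the kernel of $\partial_1\colon C_1(X,\Or)\to C_0(X,\Or)$ is finitely generated as an $\Or$-module — this is the standard argument that $FP_2$ together with a finitely generated partial resolution forces the next syzygy to be finitely generated (use Schanuel's Lemma, Proposition~\ref{Schanuel}, comparing with an $FP_{2,\mc F}$-resolution, exactly as in~\cite[Ch.VIII 4.3]{Brown} adapted to $\Or$-modules). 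So there is a finitely generated free $\Or$-module $\Z[\cdot,T]$ and a surjection $\Z[\cdot,T]\twoheadrightarrow\ker(\partial_1)$. The $G$-set $T$ has finitely many orbits and isotropy in $\mc F$; its orbit representatives give finitely many $G$-orbits of $2$-cells to equivariantly attach to $X$ along the loops representing the images of the generators in $Z_1(X^{G_t})$ for each $t$. Call the result $Y$. By construction $Y$ is cocompact with isotropy in $\mc F$, and for each $K\in\mc F$ the chain complex $C_2(Y^K)\to C_1(Y^K)\to C_0(Y^K)\to\Z\to 0$ is exact in positions $0$ and $1$ by the choice of attaching maps (the $2$-cells surject onto $\ker(\partial_1^K)=Z_1(Y^K)$), hence $\tilde H_0(Y^K)=\tilde H_1(Y^K)=0$; so $Y$ is a $1$-acyclic $F_{1,\mc F}$-space (an $FH_{2,\mc F}$-complex in the terminology of Definition~\ref{def:FH}, except acyclic rather than connected in degree $1$, which is all that is needed) with finitely many $G$-orbits of $2$-cells.

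For part (2), given a $1$-acyclic $F_{1,\mc F}$-space $Y$, the induced augmented cellular chain complex of $\Or$-modules $C_\ast(Y,\Or)$ has $C_2(Y,\Or)\to C_1(Y,\Or)\to C_0(Y,\Or)\to\ZGF\to 0$ exact (since $\tilde H_i(Y^K)=0$ for $i=0,1$ and all $K\in\mc F$) and consists of finitely generated free $\Or$-modules through degree $2$ by cocompactness. Thus it is an $FP_{2,\mc F}$-resolution out to degree $2$, and its component at $G/1$ is the chain complex $C_2(Y)\to C_1(Y)\to C_0(Y)\to\Z\to 0$ of finitely generated free $\ZG$-modules. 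Here the cellular $\ell_1$-norms on $C_1(Y)$ and $C_2(Y)$ coming from the $G$-cells are exactly filling norms in the sense of Definition~\ref{def:filling-norm} (they are induced by the identity map of the based free module on itself, or more precisely by the standard free resolution data). Therefore the topological filling function $\FV_Y$ of Definition~\ref{def:FVX} literally equals the algebraic homological filling function computed from this particular resolution. By Theorem~\ref{AlgebraicDef} the growth rate of $\FV_{G,\mc F}$ is independent of the chosen $FP_{2,\mc F}$-resolution and its filling norms, so $\FV_Y\simeq\FV_{G,\mc F}$; equivalently $\FV_{G,\mc F}$ and $\FV_Y$ are equivalent. (This is also a direct consequence of Proposition~\ref{cor:algvstop} once one observes a $1$-acyclic $F_{1,\mc F}$-space is in particular an $FH_{2,\mc F}$-complex.)

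The main obstacle is the finite generation of the syzygy $\ker(\partial_1)$ in part (1): one must argue carefully, via Schanuel's Lemma in the category of $\Or$-modules, that the $FP_{2,\mc F}$ hypothesis promotes the given finitely generated partial resolution of length $1$ (coming from the graph $X$) to one whose next kernel is still finitely generated over $\Or$, so that only finitely many orbits of $2$-cells are attached. Everything else — the acyclicity of $Y^K$, cocompactness, and the identification of cellular norms with filling norms — is a routine unwinding of definitions plus an appeal to Theorem~\ref{AlgebraicDef}.
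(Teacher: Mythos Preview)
Your Part~(2) contains a genuine gap. You assert that for a $1$-acyclic $F_{1,\mc F}$-space $Y$ the sequence $C_2(Y,\Or)\to C_1(Y,\Or)\to C_0(Y,\Or)\to\ZGF\to 0$ is exact, on the grounds that $\tilde H_i(Y^K)=0$ for $i=0,1$ and all $K\in\mc F$. But the hypothesis does not give this: ``$1$-acyclic'' refers only to $Y$ itself ($H_1(Y,\Z)=0$), while the $F_{1,\mc F}$ condition only forces each $Y^K$ to be non-empty and connected, not $1$-acyclic. (Indeed, in the construction for Part~(1) the paper attaches $2$-cells with \emph{trivial} stabilizer, so for non-trivial $K$ one has $Y^K=X^K$, a graph that typically has $H_1\neq 0$.) Thus $C_\ast(Y,\Or)$ is not in general an $FP_{2,\mc F}$-resolution, and neither Theorem~\ref{AlgebraicDef} nor Proposition~\ref{cor:algvstop} applies directly. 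Your parenthetical claim that a $1$-acyclic $F_{1,\mc F}$-space is an $FH_{2,\mc F}$-complex is simply false.

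The paper repairs this by bypassing $C_2(Y,\Or)$. Since $Z_1(Y,\Or)$ is finitely generated (the Schanuel argument you already use in Part~(1), isolated as Lemma~\ref{lem:last}), one picks a finitely generated free $\Or$-module $Q_2$ surjecting onto it, obtaining a genuine $FP_{2,\mc F}$-resolution $\cdots\to Q_2\to C_1(Y,\Or)\to C_0(Y,\Or)\to\ZGF\to 0$. Computing $\FV_{G,\mc F}$ from this resolution and comparing with $\FV_Y$, both are the distortion of $Z_1(Y,\Z)$ inside $C_1(Y,\Z)$ relative to the cellular $\ell_1$-norm on $C_1$, but with two different filling norms on $Z_1(Y,\Z)$: one induced by $Q_2[G/1]$, the other by $C_2(Y,\Z)$. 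Since $Z_1(Y,\Z)$ is a finitely generated $\ZG$-module, these filling norms are linearly equivalent by Lemma~\ref{EquivStd}, and the conclusion follows. Your Part~(1) is essentially correct in spirit (the Schanuel step is the point), but the paper simply attaches $2$-cells with trivial stabilizer along finitely many circuits generating $Z_1(X,\Z)$ as a $\ZG$-module, avoiding your unnecessary attempt to make every $Y^K$ $1$-acyclic and the attendant issue of realizing $\Or$-module generators as actual attaching circuits in the fixed subgraphs.
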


The proof of Proposition~\ref{prop:lastlemma} requires the following lemma. 

\begin{lemma}\label{lem:last}
If $G$ is of type $FP_{2, \mc F}$ and $X$ is an $F_{1, \mc F}$-space then 
the $\Or$-module of cellular $1$-cycles $Z_1(X, \Or)$ is finitely generated. 
\end{lemma}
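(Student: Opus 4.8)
The goal is to show that for $G$ of type $FP_{2,\mc F}$ and $X$ an $F_{1,\mc F}$-space, the $\Or$-module $Z_1(X,\Or)$ of cellular $1$-cycles is finitely generated. The starting point is the augmented cellular chain complex of $\Or$-modules associated to $X$. Because $X$ is an $F_{1,\mc F}$-complex, for every $K\in\mc F$ the fixed point set $X^K$ is non-empty and connected, so $H_0(X^K,\Z)\cong\Z$; this says precisely that the two-term complex $C_1(X,\Or)\overset{\partial_1}\to C_0(X,\Or)\overset{\epsilon}\to\ZGF\to 0$ is exact. Moreover, since $X$ is cocompact with isotropy in $\mc F$, the modules $C_1(X,\Or)$ and $C_0(X,\Or)$ are finitely generated free $\Or$-modules. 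Thus we have a partial free resolution of $\ZGF$ of length $1$.

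First I would invoke the hypothesis that $G$ is $FP_{2,\mc F}$ to produce a finitely generated free (or projective) partial resolution $P_2\overset{\partial_2'}\to P_1\overset{\partial_1'}\to P_0\to\ZGF\to 0$. The key step is then a Schanuel-type comparison: by the generalized Schanuel's Lemma (Proposition~\ref{Schanuel}) applied to the two partial resolutions of $\ZGF$ — one coming from $X$, one coming from the $FP_{2,\mc F}$ data, after extending both suitably — one concludes that $Z_1(X,\Or)=\ker(\partial_1)$ differs from $\ker(\partial_1')$ by a finitely generated free (projective) summand. More precisely, the standard argument is: lift the identity on $\ZGF$ to chain maps between the two length-$1$ partial resolutions in both directions, and a routine diagram chase (using that the ambient modules are finitely generated) shows $\ker(\partial_1)\oplus(\text{f.g. free}) \cong \ker(\partial_1')\oplus(\text{f.g. free})$. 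Since $\ker(\partial_1')$ is the image of $\partial_2'$ with $P_2$ finitely generated, $\ker(\partial_1')$ is finitely generated, and hence so is $\ker(\partial_1)=Z_1(X,\Or)$. I would carry this out componentwise-aware but working directly in the abelian category of $\Or$-modules, which has enough projectives, so all the homological-algebra inputs apply verbatim.

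An alternative, perhaps cleaner route: the two-term exact complex from $X$ exhibits $Z_1(X,\Or)$ as the first syzygy of $\ZGF$ with respect to a finitely generated free presentation. The standard fact that, for a group (pair) of type $FP_{n}$, all $n$-th syzygies arising from finitely generated partial resolutions are finitely generated — equivalently, the property $FP_{2,\mc F}$ implies the kernel at stage $1$ of any finitely generated free partial resolution of length $1$ is finitely generated — is exactly what is needed, and is proved via the Schanuel argument above. Either way, the substantive content is the Schanuel comparison.

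**Main obstacle.** The only real subtlety I anticipate is bookkeeping: ensuring that the partial resolution coming from $X$ and the abstract $FP_{2,\mc F}$-resolution are set up so Schanuel's Lemma applies with the projectivity hypotheses in the right spots, and then extracting finite generation of a summand from an isomorphism of the form $A\oplus(\text{f.g.})\cong B\oplus(\text{f.g.})$ — this requires knowing $B$ is finitely generated and that finite generation passes to direct summands, both of which hold for $\Or$-modules since the category is well-behaved (complete, cocomplete, abelian, enough projectives). No genuinely hard step; the proof is a short application of Schanuel's Lemma together with cocompactness of $X$.
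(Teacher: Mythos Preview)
Your proposal is correct and is essentially the paper's own proof: form the exact sequence $0\to Z_1(X,\Or)\to C_1(X,\Or)\to C_0(X,\Or)\to\ZGF\to 0$ using that each $X^K$ is non-empty and connected, take an $FP_{2,\mc F}$-resolution $0\to K\to P_1\to P_0\to\ZGF\to 0$ with $K$ finitely generated, and apply Schanuel's Lemma (Proposition~\ref{Schanuel}) to conclude $Z_1(X,\Or)$ is finitely generated. The only cosmetic difference is that the paper phrases the second sequence with the finitely generated kernel $K$ directly (rather than as the image of a finitely generated $P_2$), which lets Schanuel apply in one line without the extra summand bookkeeping you describe.
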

\begin{proof}
Consider the exact sequence of $\Or$-modules
\begin{equation}\nonumber  0\to Z_1(X,\Or) \to C_1(X,\Or) \to  C_0(X,\Or) \to \ZGF \to 0.\end{equation}
The assumptions on $X$ imply that each $C_i(X, \Or)$ is finitely generated and free.
Since $G$ is $FP_{2, \mc F}$, there is an exact sequence of $\Or$-modules
\begin{equation}\nonumber  0\to K \to P_1\to P_0 \to \ZGF \to 0\end{equation} where each $P_i$ is finitely generated and projective, and $K$ is finitely generated.   Applying Schanuel lemma~\ref{Schanuel} to these sequences shows that $Z_1(X, \Or)$ is finitely generated.
\end{proof}

\begin{proof}[Proof of Proposition~\ref{prop:lastlemma}]
 We claim that there are finitely many circuits $\gamma_1, \ldots , \gamma_n$ of $X$ such that the corresponding $1$-cycles form a generating set of the $\ZG$-module  $Z_1(X,\Z)$. By Lemma~\ref{lem:last} the $O_{\mc F}(G)$-module of cellular $1$-cycles $Z_1(X,\Or)$ is finitely generated, hence $Z_1(X,\Z)$ is finitely generated. The claim follows by observing that each cellular $1$-cycle  of $X$ can be expressed as a finite sum of $1$-cycles $\sum_{i} \alpha_i$ where each $\alpha_i$ is induced by a circuit~\cite[Lemma A2]{Ge98}.   Let $Y$ by the $2$-dimensional $G$-complex with $1$-skeleton $X$ obtained as follows. 
For each circuit $\gamma_i$, attach to $X$ a $2$-dimensional cell with boundary $\gamma_i$ and trivial $G$-stabilizer, and extend equivariantly. By construction, $Y$ is an $F_{1, \mc F}$-space with trivial $H_1(Y,\Z)$, and finitely many $G$-orbits of $2$-cells.

To prove the second statement,  Lemma~\ref{lem:last} implies that $Z_1(Y, \Or)$ is finitely generated.  Hence there is an $FP_{2,\mc F}$ resolution of the form \[\cdots \to Q_2  \to C_1(Y,\Or)\to C_0(Y,\Or) \to \Z_{G,\mc F} \to 0\] where $Q_2$ is finitely generated and free. The expression defining $\FV_{G, \mc F}$ using this resolution and the definition of $\FV_Y$ differ only on the  chosen filling norm on $Z_1(Y, \Z)$.  Since any two filling norms are equivalent, Lemma~\ref{EquivStd}, we have that $\FV_{G, \mc F}$ and $\FV_Y$ are equivalent.
\end{proof}


\section{Proof of Theorem~\ref{thm:subFV}.}\label{sec:subgroupthm}

The section is divided into two parts. In the first part, we introduce the notion of submodule distortion and prove a technical result relating retractions  and module distortion, Proposition~\ref{thm:retraction}. In the second part, this proposition is used to prove Theorem~\ref{thm:subFV}.  Through the rest of the section $G$ is a group and  $\mc F$ is a family of subgroups.

\subsection{Submodule distortion and retractions}

\begin{definition}
 The \emph{distortion function of a finitely generated $\ZG$-submodule $P$ in a finitely generated $\ZG$-module $L$} is defined by
\[\Dist^L_P(k) = \max \left \{ \ \| \gamma \|_{P} \colon  \ \gamma \in P \text{ and } \| \gamma \|_{L} \leq k \ \right \},\]
where $\|\cdot \|_{P}$ and $\|\cdot \|_{L}$ are filling norms for $P$ and $L$, and 
$\Dist^L_P(k)$ is allowed to be $\infty$.
\end{definition}

\begin{remark}
The equivalence of  filling norms on a finitely generated $\ZG$-module, Lemma~\ref{EquivStd}, implies that the growth rate of $\Dist^L_P(k)$  is independent of the choice of filling norms.
\end{remark}

\begin{remark} \label{rem:distFV}
In the framework of Definition~\ref{def:algFVG}, the homological filling function of $G$ relative to $\mc F$ is the distortion function of $\kernel (\partial_1)$ in $P_1$, specifically,  $\FV_{G, \mc F} = \Dist_{\kernel(\partial_1)}^{P_1}$.
\end{remark}

\begin{definition}\label{def:Ffree}
A $\ZG$-module $M$ is \emph{$\mc F$-free} if there is a $G$-set $S$ with all isotropy groups in $\mc F$ such that  the induced $\ZG$-module $\Z[S]$ is isomorphic to $M$.
\end{definition}

\begin{remark}Let $M$ be a free $\Or$-module.
Then the $G/1$-component $M[G/1]$ is an $\mc F$-free $\ZG$-module.   If $M$ is finitely generated as an $\Or$-module, then $M[G/1]$ is finitely generated as a $\ZG$-module.
\end{remark}

\begin{proposition}\label{thm:retraction}
Let $H$ be a subgroup of $G$. 
Consider a commutative diagram 
 \begin{equation}\nonumber \xymatrix{ 
M  \ar[rr] &   & L \\
Q \ar[rr] \ar[u] \ar[rr]_{\imath}&    &P\ar[u] 
}\end{equation}
where 
\begin{itemize}
\item $L$ and $P$ are finitely generated $\ZG$-modules and $P$ is a submodule of $L$,
\item   $M$ and $Q$ are finitely generated $\ZH$-modules and $Q$ is a submodule of $M$,
\item as $\ZH$-modules, $Q$ is a submodule of $P$, and $M$ is a submodule of $L$,  
\item all the arrows in the diagram are inclusions. 
\end{itemize}
Let $\|\cdot\|_L$ and $\|\cdot\|_M$ be filling norms on $L$ and $M$ respectively. Suppose there is an integer $C_0\geq 1$ such that for every $x\in M$
\[ \|x\|_L \leq C_0\cdot \|x\|_M.\] 
Suppose that $P$ is $\mc F$-free and there is a commutative diagram of $\ZH$-modules
 \begin{equation}\nonumber \xymatrix{ 
  & P \ar[rd]^{\rho}  &   \\
  Q \ar[ur]^\imath \ar[rr]_{Id} &  & Q.    			  
}\end{equation}
Then 
\[ \Dist_Q^M \preceq \Dist_P^L .\]
\end{proposition}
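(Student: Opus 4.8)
The plan is to bound $\|\gamma\|_Q$ in terms of $\|\gamma\|_P$ for $\gamma \in Q$ using the retraction $\rho$, and then separately bound $\|\gamma\|_P$ in terms of $\|\gamma\|_L$ by a routine argument (both $P$ and $L$ are finitely generated $\ZG$-modules, so this is precisely the statement that $\Dist_P^L$ is the relevant quantity). The crux is therefore to transfer a filling of $\gamma$ in $L$ (cheap, of size $\approx \|\gamma\|_L$) into a filling in $Q$ whose size is controlled by $\FV$-type data, i.e.\ by $\Dist_P^L$ composed with the hypothesis $\|x\|_L \le C_0\|x\|_M$.

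\textbf{Key steps, in order.} First I would fix, on $P$, a filling norm $\|\cdot\|_P$ coming from a finitely generated based free $\ZG$-module surjecting onto $P$; since $P$ is $\mc F$-free, I can in fact take $P = \Z[S]$ with $S$ a $G$-set with isotropy in $\mc F$, and use the $\ell_1$-norm with respect to the basis $S$ itself, so that $\|\cdot\|_P$ is literally the $\ell_1$-norm on $P$. Second, given $\gamma \in Q$ with $\|\gamma\|_M \le k$, use the hypothesis to get $\|\gamma\|_L \le C_0 k$; by definition of the distortion function this gives an element $\gamma$ viewed in $P$ with $\|\gamma\|_P \le \Dist_P^L(C_0 k)$ — write $\gamma = \sum_i n_i s_i$ in the basis $S$ with $\sum_i |n_i| \le \Dist_P^L(C_0 k)$. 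Third — and this is the key move — apply the $\ZH$-retraction $\rho\colon P \to Q$. Since $\rho$ restricts to the identity on $Q$ and $\gamma \in Q$, we recover $\gamma = \rho(\gamma) = \sum_i n_i \rho(s_i)$, an expression of $\gamma$ as a $\ZH$-combination of the finitely many elements $\{\rho(Hs) : Hs \in S/H\}$ (here one uses $H$-equivariance of $\rho$ to organize the basis $S$ into $H$-orbits). Fourth, interpret this last expression as a filling of $\gamma$ by the finitely generated based free $\ZH$-module on the $H$-orbit representatives of $S$ — call it $F$, with the obvious map $F \to Q$ factoring $\rho$ — whose $\ell_1$-norm of the chosen preimage is $\le \sum_i |n_i| \le \Dist_P^L(C_0 k)$; hence $\|\gamma\|_{F \to Q} \le \Dist_P^L(C_0 k)$. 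Fifth, since any two filling norms on the finitely generated $\ZH$-module $Q$ are equivalent (Lemma~\ref{EquivStd}), conclude $\|\gamma\|_Q \le C_1 \Dist_P^L(C_0 k) + C_1$ for some constant $C_1$, which is exactly $\Dist_Q^M(k) \preceq \Dist_P^L(k)$.

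\textbf{Main obstacle.} The delicate point is the bookkeeping in Step~3–4: one must check that $P$ being $\mc F$-free as a $\ZG$-module lets us pick a $\ZG$-basis $S$ that is simultaneously a well-behaved $H$-set, so that $F := \Z[S \text{ as an } H\text{-set, orbit representatives}]$ is a genuinely \emph{finitely generated} free $\ZH$-module — this is where the hypothesis that $P$ is $\mc F$-free (rather than merely $\ZG$-free) does real work, since an $H$-orbit of $S$ may be infinite but there are only finitely many of them. One also needs that the composite $F \to P \xrightarrow{\rho} Q$ realizes a legitimate filling norm on $Q$ (it need not be surjective onto $Q$ a priori, but $\rho\circ\imath = \mathrm{Id}_Q$ forces surjectivity of $F \to Q$, so the filling norm is defined). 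Once these compatibilities are in place, the inequality chain is a direct computation of the same flavor as the proof of Theorem~\ref{AlgebraicDef}, using Lemma~\ref{lem:ProjBounded} to absorb the map $P \hookrightarrow L$ into the constant and Lemma~\ref{EquivStd} to pass between filling norms on $Q$ and on $M$.
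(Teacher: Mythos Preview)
Your overall strategy matches the paper's: use the $\ell_1$-norm on $P=\Z[S]$ as the filling norm, push $\gamma\in Q$ through $\rho$ to bound $\|\gamma\|_Q$ by $\|\gamma\|_P$, and then chain with $\Dist_P^L$ and the hypothesis $\|x\|_L\le C_0\|x\|_M$. The chain of inequalities in your Steps~1--2 and the conclusion are exactly what the paper does.

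There is, however, a genuine gap in Steps~3--5. You assert that the $H$-set $S$ has only finitely many $H$-orbits, and you attribute this to $P$ being $\mc F$-free. Neither claim holds. The hypothesis gives $S=\bigsqcup_i G/K_i$ with finitely many $G$-orbits and $K_i\in\mc F$, but as an $H$-set each $G/K_i$ breaks into double cosets $H\backslash G/K_i$, of which there are typically infinitely many (take e.g.\ $G$ infinite, $H$ trivial, $K_i$ finite). So your free $\ZH$-module $F$ on $H$-orbit representatives of $S$ is in general \emph{not} finitely generated, the norm coming from $F\twoheadrightarrow Q$ is not a filling norm in the sense of Definition~\ref{def:filling-norm}, and Lemma~\ref{EquivStd} does not apply to compare it with $\|\cdot\|_Q$. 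The isotropy-in-$\mc F$ condition does no work at this point; what is actually needed is a way to cut $S$ down to finitely many $H$-orbits.

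The paper repairs this using the finite generation of $Q$, not of $P$: since $Q$ is a finitely generated $\ZH$-module, the image $\imath(Q)\subset\Z[S]$ lies in $\Z[S_1]$ for some $H$-invariant subset $S_1\subset S$ with $S_1/H$ finite. One then redefines $\rho$ to vanish on $\Z[S_2]$ where $S_2=S\setminus S_1$; this preserves $\rho\circ\imath=\mathrm{Id}_Q$ because $\imath$ already factors through $P_1=\Z[S_1]$. Now $\rho|_{P_1}\colon P_1\to Q$ is a map between finitely generated $\ZH$-modules, so Lemma~\ref{lem:ProjBounded} gives a constant $C_1$ with $\|\rho(x)\|_Q\le C_1\|x\|_{P_1}$, and since $\|x\|_{P_1}\le\|p\|_P$ for $p=x\oplus y\in P_1\oplus P_2$ one gets $\|\rho(p)\|_Q\le C_1\|p\|_P$ for all $p\in P$. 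Plugging this into your inequality chain then finishes the proof exactly as you outline.
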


\begin{proof}[Proof of Proposition~\ref{thm:retraction}]
  Since $P$ is a finitely generated $\mc F$-free $\ZG$-module, there is a  $G$-set $S$ with isotropy groups in $\mc F$ and finitely many $G$-orbits  such that $P\cong \Z[S]$. 
  Denote by $\|\cdot\|_P$ the $\ell_1$-norm on $P$ induced by the free $\Z$-basis $S$; observe that this norm is a filling norm on $P$ as a $\ZG$-module.

Since $Q$ is a finitely generated $\Z H$-module, there is a partition $S=S_1 \cup S_2$ where each $S_i$ is an $H$-equivariant subset of $S$, the quotient $S_1/H$ is finite, and the inclusion $\imath\colon Q \to P$ factors through the inclusion $P_1 \hookrightarrow P$ as in the commutative diagram below
 \begin{equation}\nonumber \xymatrix{ 
& P_1 \ar[d] \\
Q \ar[ru]  \ar[r]_\imath & P 
}\end{equation}
where  $P_1=\Z[S_1]$, and $P_2 = \Z[S_2]$.  Equip  the $\ZH$-module $P_1$ with the $\ell_1$-norm $\|\cdot\|_{P_1}$ induced by the free $\Z$-basis $S_1$, and observe that this norm is a filling norm on $P_1$ as a finitely generated $\ZH$-module.  We have that
\[ \|x\|_{P_1} = \| x \|_P   \]
for every $x\in P_1$.  Moreover, if $p=x\oplus y$ is an element of $P \cong P_1\oplus P_2$, we have that
\[  \|x\|_P \leq \|p\|_P.\]

 Observe that we can assume that $\rho \colon P \to Q$ restricts to the trivial morphism when restricted to $P_2$. Simply redefine $\rho$ on $P_2$ as the zero morphism and observe that the commutative diagram of the statement of the proposition still holds.  

  Consider the morphism of finitely generated $\ZH$-modules $\rho\colon P_1 \to Q$.  By   Lemma~\ref{lem:ProjBounded} there is an integer $C_1$ such that for every $x\in P_1$,
\[ \| \rho (x) \|_Q \leq C_1\|x\|_{P_1} .\]

 Let $p\in P$ be arbitrary and suppose $p=x\oplus y$ where $x\in P_1$ and $y\in P_2$. Then   
\[ \|\rho (p)\|_Q = \|\rho (x)\|_Q \leq C_1\|x\|_{P_1} = C_1 \| x\|_P \leq C_1 \| p\|_P ,\]
where the first equality follows from $\rho$ being trivial on $P_2$.

Let $C=\max\{C_0, C_1\}$.  Let $x\in Q$ be an arbitrary element and observe that
\[\|x\|_{Q} = \|\rho\circ \imath (x)\|_Q \leq C\cdot \|\imath(x)\|_P \leq C \cdot \Dist_P^L \left( \|\imath(x)\|_L \right) \leq  C \cdot \Dist_P^L \left( C\cdot \|\imath(x)\|_M \right),\]
where the last inequality follows from the hypothesis that $\|x\|_L\leq C\|x\|_M$ for every $x\in M$.
Therefore for every positive integer $k$, 
\[ \Dist_Q^M(k) \leq C\cdot \Dist_P^L(C  k)\qedhere\]
\end{proof}

\subsection{Proof of Theorem~\ref{thm:subFV}}\label{subsec:mainstep}

\begin{definition} 
The \emph{Bredon cohomological dimension} of $G$ relative to $\mc F$ is at most $n$,  denoted by $\cdF(G) \leq n$, if there is a projective  resolution of $\mc{O_F}(G)$--modules 
\[0\longrightarrow  P_n  \longrightarrow \dots   \longrightarrow P_1 \longrightarrow P_0 \longrightarrow \mathbb \ZGF \rightarrow 0.\] 
\end{definition}
 
\begin{proposition}\label{prop:gammac}
 If  $G$ is of type $F_{\mc F}$  and  $\cdF(G) \leq 2$, then there is an $F_{1, \mc F}$-complex $X$  such that   $Z_1\left(X, \Or\right)$ is finitely generated and free. 
\end{proposition}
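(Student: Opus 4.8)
The plan is to realize geometrically the algebraic fact that, under the hypotheses, the second syzygy of $\ZGF$ is \emph{stably free} (not merely projective), and then to add a finite family of ``parallel bigons'' to a $1$-skeleton in order to fill in the missing free summand.

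\textbf{Step 1: a finite-dimensional cocompact model.} First I would upgrade the hypothesis $F_{\mc F}$ to a \emph{finite-dimensional} cocompact model. Since $G$ is of type $F_{\mc F}$ it admits a cocompact model for $E_{\mc F}G$, and combining this with $\cdF(G)\le 2$, the bound $\gdF(G)\le\max\{3,\cdF(G)\}$ of L\"uck--Meintrup~\cite{LuMe00} yields such a model $Y$ that is in addition of dimension at most $3$. Let $\Gamma=Y^{(1)}$ be its $1$-skeleton. Because the actions have no inversions, $\Gamma^K=(Y^K)^{(1)}$ for every $K\le G$, and since $Y^K$ is contractible for $K\in\mc F$, each $\Gamma^K$ is non-empty and connected; thus $\Gamma$ is an $F_{1,\mc F}$-complex. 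Moreover $C_\ast(Y,\Or)$ is an exact sequence $0\to C_d(Y,\Or)\to\cdots\to C_0(Y,\Or)\to\ZGF\to 0$ of finitely generated free $\Or$-modules, where $d=\dim Y\le 3$.

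\textbf{Step 2: $M:=Z_1(\Gamma,\Or)$ is stably free.} The module $M=\ker\big(\partial_1\colon C_1(Y,\Or)\to C_0(Y,\Or)\big)$ is finitely generated, being the image of $C_2(Y,\Or)$ under $\partial_2$. It is projective: comparing the exact sequence $0\to M\to C_1(Y,\Or)\to C_0(Y,\Or)\to\ZGF\to 0$ with a length-$2$ projective resolution of $\ZGF$ (which exists by $\cdF(G)\le 2$) via Schanuel's Lemma~\ref{Schanuel} exhibits $M$ as a direct summand of a projective $\Or$-module. Finally, the tail $0\to C_d(Y,\Or)\to\cdots\to C_2(Y,\Or)\overset{\partial_2}\longrightarrow M\to 0$ is a finite free resolution of $M$; applying Schanuel's Lemma to this resolution and to the trivial resolution $0\to M\overset{\mathrm{id}}\longrightarrow M\to 0$ (using that $M$ is projective) gives $M\oplus A\cong B$ with $A,B$ finitely generated free $\Or$-modules. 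Write $A\cong\bigoplus_{i=1}^{m}\Z[\cdot,G/K_i]$ with $K_i\in\mc F$.

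\textbf{Step 3: realizing $A$ by surgery.} For each $i$ I would choose a vertex $v_i\in\Gamma^{K_i}$ (non-empty because $K_i\in\mc F$) and attach to $\Gamma$, $G$-equivariantly, one new vertex orbit with representative $w_i$ of stabilizer $K_i$, together with two new edge orbits with representatives $e_i,e_i'$, each joining $v_i$ to $w_i$ and with stabilizer $G_{v_i}\cap G_{w_i}=K_i$; call the resulting $G$-graph $X$. Then $X$ is again an $F_{1,\mc F}$-complex: it is cocompact, its isotropy groups lie in $\mc F$, the action has no inversions (the two endpoints of each new edge lie in distinct $G$-orbits), and for $K\in\mc F$ the set $X^K$ is non-empty and connected, each new parallel pair being attached to the connected graph $\Gamma^K$ at a vertex $gv_i\in\Gamma^K$. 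A direct computation in the cellular $\Or$-chain complex shows that each cycle $e_i-e_i'$ spans a free summand $\Z[\cdot,G/K_i]$ of $Z_1(X,\Or)$ and that $Z_1(X,\Or)=Z_1(\Gamma,\Or)\oplus\bigoplus_{i=1}^{m}\Z[\cdot,G/K_i](e_i-e_i')\cong M\oplus A\cong B$, which is finitely generated and free, as required.

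\textbf{Main obstacle.} I expect the crux to be Step 2: the hypothesis $\cdF(G)\le 2$ only gives that $M$ is projective, and projectivity is not enough to run the surgery. One genuinely needs a \emph{finite-dimensional} cocompact model --- equivalently, that $\ZGF$ has a finite free resolution --- in order to conclude that $M$ is stably free (ruling out a non-trivial Bredon analogue of Wall's finiteness obstruction); this is precisely where the hypothesis ``$G$ is of type $F_{\mc F}$'' is used in full strength rather than just $FP_{2,\mc F}$. The verifications in Step 3 that $X$ is an $F_{1,\mc F}$-complex and that $Z_1(X,\Or)$ has the asserted form are routine.
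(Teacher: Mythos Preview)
Your proof is correct and follows essentially the same strategy as the paper's: use two applications of Schanuel's Lemma to show that $Z_1$ of the $1$-skeleton of a cocompact $E_{\mc F}G$ is finitely generated and stably free, then perform an Eilenberg-trick surgery on the graph to absorb the missing free summand. The only cosmetic differences are that your appeal to L\"uck--Meintrup is unnecessary (a cocompact $G$-complex has finitely many orbits of cells and is therefore already finite-dimensional), and that you attach parallel bigons where the paper attaches loops at a vertex (Lemma~\ref{lem:EilenbergTrick}); both realizations of the added free summand work equally well.
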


The proof of Proposition~\ref{prop:gammac} requires the following definition and lemma.
\begin{definition}
An $\Or$-module $M$ is \emph{stably   free} if there is a finitely generated free $\Or$-module $F$ such that $M \oplus F$ is a  free $\Or$-module.
\end{definition}

\begin{lemma}\label{lem:EilenbergTrick}
Let $X$ be a  $G$-graph and let $v$ be a vertex of $X$ with $G$-stabilizer $K\in \mc F$. Let $X_\star$ be  the $G$-graph obtained from $X$ by adding a new $G$-orbit of edges with representative an edge $e$ such that $G_e=K$ and both endpoints are equal to $v$. Then the $\Or$-module of cellular $1$-cycles $Z_1(X_\star)$ is isomorphic to $Z_1(X) \oplus \Z [\cdot, G/K]$. 
\end{lemma}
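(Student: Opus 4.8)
The plan is to analyze how adding a single $G$-orbit of loop-edges at the vertex $v$ changes the cellular chain complex, and then read off the effect on $1$-cycles directly. First I would note that, as $\Or$-modules, the chain groups of $X_\star$ differ from those of $X$ only in degree $1$: we have $C_0(X_\star,\Or)=C_0(X,\Or)$, and $C_1(X_\star,\Or)\cong C_1(X,\Or)\oplus \Z[\cdot,G/K]$, where the new summand $\Z[\cdot,G/K]$ is the free $\Or$-module generated by the orbit of the new edge $e$ (its stabilizer being $G_e=K\in\mc F$, so this is a legitimate free summand). The boundary map $\partial_1^{\star}\colon C_1(X_\star,\Or)\to C_0(X_\star,\Or)$ restricts to the old $\partial_1$ on $C_1(X,\Or)$, and sends the generator corresponding to $e$ to $v_+ - v_- = 0$ because both endpoints of $e$ are the vertex $v$. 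Hence on the new summand $\Z[\cdot,G/K]$ the boundary map is identically zero.

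Next I would compute kernels componentwise, which is legitimate since kernels in $\Or$-modules are computed in each component (as recalled in the Preliminaries). For each $L\in\mc F$, the map $\partial_1^{\star}$ at $G/L$ is the direct sum of $\partial_1$ at $G/L$ on $C_1(X,\Or)[G/L]$ and the zero map on $\Z[\cdot,G/K][G/L]=\Z[(G/K)^L]$. Therefore $\ker(\partial_1^{\star})[G/L]=\ker(\partial_1)[G/L]\oplus \Z[(G/K)^L]$, and these identifications are natural in $G/L$, so $Z_1(X_\star,\Or)=\ker(\partial_1^{\star})\cong \ker(\partial_1)\oplus \Z[\cdot,G/K]=Z_1(X,\Or)\oplus\Z[\cdot,G/K]$ as $\Or$-modules, which is exactly the claim.

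I do not expect a serious obstacle here; the only point requiring a little care is checking that the new edge $e$ genuinely contributes a free summand $\Z[\cdot,G/K]$ to $C_1(X_\star,\Or)$ — i.e. that the $G$-action on the new orbit of edges has no inversions and has edge stabilizer exactly $K$ (which holds by hypothesis, since $G_e=K$ and the endpoints coincide so there is nothing to invert), so that the orbit of $e$ looks like $G/K$ as a $G$-set with isotropy in $\mc F$. Given this, the chain group $C_1(X_\star,\Or)$ is $C_1(X,\Or)\oplus\Z[\cdot,G/K]$ by additivity of the cellular chain functor over disjoint orbits of cells, and the rest is the componentwise kernel computation above.
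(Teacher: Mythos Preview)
Your proof is correct and follows essentially the same approach as the paper: both arguments compute $Z_1(X_\star)$ componentwise, using that the added orbit of loop-edges contributes a free summand $\Z[\cdot,G/K]$ to $C_1$ on which $\partial_1$ vanishes. Your presentation via the direct-sum decomposition of $C_1(X_\star,\Or)$ and the block-diagonal form of $\partial_1^\star$ is in fact a bit more explicit than the paper's, which phrases the same computation in terms of the fixed-point subgraphs $X_\star^J$ versus $X^J$.
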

\begin{proof}
For each $J\in \mc F$, the component $Z_1 (X_\star)[G/J]$ is identified with $\Z$-module of  cellular $1$-cycles of the fixed point set $X_\star^J$.  Hence  there are natural  isomorphisms   $Z_1 (X_\star)[G/J] \to Z_1(X^J)$ if $J$ is not a subgroup of a conjugate of $K$,  and  $Z_1 (X_\star)[G/J] \to Z_1(X^J)\oplus \Z$ if $J$ is a subgroup of a conjugate of $K$.  One verifies that these isomorphisms of $\Z$-modules  are the components of an isomorphism of $\Or$-modules $Z_1(X_\star) \to Z_1(X) \oplus \Z [\cdot, G/K]$. 
\end{proof}

\begin{proof}[Proof of Proposition~\ref{prop:gammac}]
Let $X$ be a cocompact model for $E_{\mc F}(G)$, and let $n$ be its dimension.
Consider the exact sequences of $\Or$-modules
\begin{equation}\label{eq:corr1} 0\to Z_1(X) \to C_1(X)  \to C_0(X) \to \ZGF \to 0,\end{equation}
and
\begin{equation}\label{eq:corr3} 0\to C_n(X) \to \cdots \to C_1(X)  \to C_0(X) \to \ZGF \to 0.\end{equation}
The assumptions on $X$ imply that  each $C_i(X)$  is a finitely generated and free $\Or$-module. Since $\cdF(G)\leq 2$,    
there is an exact sequence  of projective $\Or$-modules
\begin{equation}\label{eq:corr2} 0\to P_{2} \to  P_1\to P_0 \to \ZGF \to 0.\end{equation}
Applying Schanuel's lemma to the exact sequences~\eqref{eq:corr1} and~\eqref{eq:corr2} implies that $Z_1(X)$ is projective. Then   Schanuel's lemma can be applied to the exact sequences~\eqref{eq:corr1} and~\eqref{eq:corr3},  which yields   that $Z_1(X)$ is stably free and finitely generated.   Let $Y$ be the complex obtained by adding finitely many new orbits of edges to $X$ in such a way that Lemma~\ref{lem:EilenbergTrick} implies that $Z_1(Y)$ is a finitely generated free $\Or$-module.  Then $Y$ is an $F_{1, \mc F}$-space for $G$.
\end{proof}

 \begin{proposition}\label{prop:deltasg}
Let $\Gamma$ be an $F_{1, \mc F}$-complex for $G$. 
If $H$ is a subgroup of $G$ of type $F_{1, \mc F\cap H}$, then $\Gamma$ contains an $H$-equivariant subgraph $\Delta$ which is an $F_{1, \mc F\cap H}$-complex.
\end{proposition}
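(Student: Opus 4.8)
The plan is to build the subgraph $\Delta$ inside $\Gamma$ by combining two inputs: a cocompact $F_{1,\mc F\cap H}$-complex $X_0$ for $H$, which exists because $H$ is of type $F_{1,\mc F\cap H}$, and the restriction of the $G$-action on $\Gamma$ to $H$. The key point is that $\Gamma$, being an $F_{1,\mc F}$-complex, has the property that for every $K\in\mc F$ the fixed point set $\Gamma^K$ is non-empty and connected; in particular for every $K\in\mc F\cap H$ (which is a subgroup of some member of $\mc F$) the fixed set $\Gamma^K$ is non-empty and connected. So $\Gamma$, viewed as an $H$-complex, already satisfies the ``acyclicity of fixed sets'' half of the definition of an $F_{1,\mc F\cap H}$-complex; the only thing that can fail is cocompactness of the $H$-action. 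Thus $\Delta$ should be a cocompact $H$-subgraph of $\Gamma$ whose $K$-fixed subsets remain non-empty and connected for all $K\in\mc F\cap H$.

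First I would use the $F_{1,\mc F\cap H}$-complex $X_0$ for $H$ to choose, for each $H$-orbit of vertices of $X_0$, a vertex of $\Gamma$ fixed by the corresponding isotropy group: if $v$ is a vertex of $X_0$ with $H$-stabilizer $K\in\mc F\cap H$, then $\Gamma^K\neq\varnothing$, so pick $w_v\in\Gamma^K$ and extend $H$-equivariantly; this gives an $H$-map on the vertex sets $X_0^{(0)}\to\Gamma^{(0)}$ whose image $W$ is an $H$-cocompact set of vertices of $\Gamma$ meeting every ``required'' fixed set. Then, since $\Gamma^K$ is connected for each $K\in\mc F\cap H$, I would enlarge $W$ to a subgraph by the following procedure: for each of the finitely many $H$-orbits of pairs of vertices in $W$ lying in a common $\Gamma^K$ with $K\in\mc F\cap H$, pick a finite path in $\Gamma^K$ joining them and throw in its $H$-orbit; also, to guarantee overall connectivity of $\Delta$, use the edges of $X_0$ — each edge of $X_0$ joins two vertices whose images in $\Gamma$ lie in $\Gamma$, and connectivity of $\Gamma$ lets me join those images by finitely many $H$-orbits of paths. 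Taking $\Delta$ to be the union of $W$ with all these finitely many $H$-orbits of edge-paths produces an $H$-cocompact subgraph of $\Gamma$, all of whose isotropy groups lie in $\mc F\cap H$ (they are stabilizers of cells of $\Gamma$ intersected with $H$, hence members of $\mc F\cap H$).

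It then remains to check that $\Delta^K$ is non-empty and connected for every $K\in\mc F\cap H$. Non-emptiness is immediate from the construction of $W$. For connectivity, the argument is: $\Delta^K$ contains $W^K$, which is non-empty; any vertex of $\Delta^K$ lies on one of the chosen $H$-orbits of paths, and I would arrange (by choosing the joining paths inside the appropriate $\Gamma^{K'}$ and using that $K$ fixes them when $K\subseteq K'$, together with the standard fact that in a tree-like bookkeeping the $K$-fixed part of a connected $K$-invariant graph built from $K$-fixed pieces is connected) that every such vertex is joined within $\Delta^K$ to $W^K$. Concretely, I would mimic the bookkeeping already used in Step~\ref{part2} and in Lemma~\ref{lem:EilenbergTrick}: connectivity of $\Delta^K$ can be read off from $H_0(\Delta^K,\Z)\cong\Z$, which I verify by exhibiting a surjection from the $1$-chains of a known connected complex.

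\textbf{Main obstacle.} The delicate part is connectivity of the fixed subgraphs $\Delta^K$: adding $H$-orbits of edges to make $\Delta$ itself connected and $H$-cocompact is easy, but one must ensure these additions do not create a $\Delta$ whose $K$-fixed locus, while non-empty, is disconnected — i.e. the joining paths must be chosen inside the fixed sets $\Gamma^{K'}$ (for $K\subseteq K'\in\mc F\cap H$) rather than in $\Gamma$ at large, and one must check these finitely many choices can be made simultaneously and $H$-equivariantly. This is exactly the kind of argument carried out in the proof of Proposition~\ref{prop:FP1Graph}, so I expect the proof to proceed by reducing connectivity of each $\Delta^K$ to the exactness of a short chain-level sequence over $\ZH$ (or over $\mc O_{\mc F\cap H}(H)$), rather than by an ad hoc path-chasing argument.
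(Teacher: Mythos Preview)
Your core idea — map an abstract $F_{1,\mc F\cap H}$-complex $X_0$ for $H$ into $\Gamma$ using that each $\Gamma^K$ ($K\in\mc F$) is non-empty and connected, then take the image — is exactly the paper's approach. The paper's argument is considerably shorter than your sketch: pick orbit representatives $\{v_i\}$ and $\{e_j\}$ of vertices and edges of $X_0$; send each $v_i$ to some vertex of $\Gamma^{H_{v_i}}$; send each edge $e_j$ (with endpoints $v_-,v_+$ and stabilizer $H_j$) to a path in $\Gamma^{H_j}$ from $\phi(v_-)$ to $\phi(v_+)$, which exists since $\Gamma^{H_j}$ is connected; subdivide $X_0$ so that this becomes a combinatorial $H$-map $\phi\colon X_0\to\Gamma$; and take $\Delta=\phi(X_0)$. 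There are no extra paths between ``pairs of vertices in $W$'' and no homological bookkeeping as in Step~3 of Proposition~\ref{prop:FP1Graph}.

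Two comments on the additions you propose. First, your assertion that there are only finitely many $H$-orbits of pairs of vertices of $W$ is false in general: if $W\cong\coprod_i H/K_i$ then $(W\times W)/H$ breaks into double-coset spaces $K_i\backslash H/K_j$, which need not be finite, so that mechanism for forcing connectivity of fixed sets does not work as stated. Second, the paper does \emph{not} carry out the verification you flag as the main obstacle — it simply asserts that the image of $\phi$ does the job. So while your instinct that connectivity of $\Delta^K$ deserves an argument is reasonable (indeed $\phi(X_0^K)$ is connected, but a priori $\phi(X_0)^K$ could be larger), the paper's proof does not supply one; your proposal is more cautious here than the paper itself, but the homological route you anticipate is not what the paper does.
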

\begin{proof} 
Since $H$ is $FP_{1,\mc F\cap H}$,  Proposition~\ref{prop:FP1Graph}  implies that there is an $F_{1, \mc F\cap H}$-complex  $\Delta$.    
Let $\{v_i \colon i\in I\}$ and $\{e_j \colon i\in J\}$ be complete sets of representatives of $H$-orbits of vertices and edges of $\Delta$ respectively.  For each $i\in I$, choose a vertex $w_i$ in the fixed point set $\Gamma^{H_{v_i}}$ which is non-empty since $H_{v_i}\in \mc F$. The set assignment $v_i\mapsto w_i$ induces an $H$-map $\phi$ from the vertex set of $\Delta$ to the one of $\Gamma$.  For each $j\in J$, let $H_j$ be the (pointwise) $H$-stabilizer of $e_j$ and let $v_-$ and $v_+$ be the endpoints of $e_j$.  Let $\gamma_j$ be an edge path in the fixed point set $\Gamma^{H_j}$ from $\phi(v_-)$ to $\phi (v_+)$;   $\Gamma^{H_j}$  is connected since $H_j\in \mc F$. After subdividing $\Delta$, the assignment $e_j \mapsto \gamma_j$ extends $\phi$ to an $H$-map of graphs $\phi\colon \Delta \to \Gamma$.  Replacing $\Delta$ with the image of $\phi$ proves the claim.
\end{proof}

\begin{proof}[Proof of Theorem~\ref{thm:subFV}]  
By Proposition~\ref{prop:gammac}, there is an 
$F_{1, \mc F}$-complex $\Gamma$ for $G$ such that $Z_1(\Gamma, \Or)$ is free and  finitely generated.  By Proposition~\ref{prop:deltasg}, $\Gamma$ contains an $H$-equivariant subgraph $\Delta$ which is an $F_{1, \mc F\cap H}$-complex for $H$. Since $H$ is $FP_{2, \mc F\cap H}$, we have that $Z_1(\Delta, \OrH)$ is finitely generated.

From here on, all considered modules are $\OrH$-modules. Consider $\Gamma$ and $\Delta$ as $H$-graphs. Then there is a short exact sequence of cellular $\OrH$--chain complexes 
\begin{equation}\label{eq:main2.9} 0 \rightarrow C_\ast ( \Delta ) \rightarrow C_\ast (\Gamma  ) \rightarrow C_\ast ( \Gamma , \Delta ) \rightarrow 0\end{equation}
where  $C_i( \Gamma , \Delta)$ is the quotient $C_i( \Gamma ) / C_i( \Delta )$.
Consider  the induced long exact homology sequence. Observing that $H_2 (\Gamma , \Delta)$ is trivial, and that  the reduced homology $\tilde H_0 (\Delta)$ is trivial (since $\Delta^K$ is connected for all $K \in \mc F \cap H$), we obtain that the sequence  
\begin{equation}\label{eq:main3} 0 \rightarrow Z_1( \Delta ) \rightarrow Z_1 ( \Gamma  ) \rightarrow Z_1 ( \Gamma , \Delta ) \rightarrow 0\end{equation}
is exact. Considering once more the  long exact sequence induced by~\eqref{eq:main2.9}, one verifies that  
\begin{equation}\label{eq:main5} 0\to Z_1(\Gamma, \Delta) \to C_1(\Gamma, \Delta) \to C_0(\Gamma, \Delta)   \to 0\end{equation}
is an exact sequence of $\OrH$-modules. Since $\Delta$ is an $H$-equivariant subgraph of $\Gamma$, we have that $C_i(\Delta)$ is a free factor of $C_i( \Gamma)$; hence $C_i( \Gamma , \Delta )$ is free. Therefore, the sequence~\eqref{eq:main5} implies that $Z_1(\Gamma, \Delta)$ is   projective. The exact sequence~\eqref{eq:main3} together with   $Z_1(\Gamma, \Delta)$ being projective implies that  
 there is a commutative diagram of $\OrH$-modules
\begin{equation}\nonumber \xymatrix{ 
  & Z_1(\Gamma) \ar[rd]^\rho  &    \\
  Z_1(\Delta) \ar[ur]^\imath \ar[rr]_{Id} &  & Z_1(\Delta).    		
}\end{equation}

The proof concludes by invoking Proposition~\ref{thm:retraction} as follows.  
Considering the $H/1$ component of the above commutative diagram, we obtain the analogous diagram for the $\ZH$-modules of integral cellular $1$-cycles of $\Gamma$ and $\Delta$.  Since $Z_1 (\Gamma, \Or)$ is finitely generated and free, it follows that $P:=Z_1(\Gamma, \Z)$ is a finitely generated and $\mc F$-free $\ZG$-module. Analogously, since $Z_1(\Delta, \OrH)$ is finitely generated, $Q:=Z_1(\Delta, \Z)$ is a finitely generated $\ZH$-module. Moreover, $Q$ is a  submodule of the finitely generated $\ZH$-module $M:=C_1(\Delta, \Z)$, and analogously $P$ is a submodule of the finitely generated $\ZG$-module $L:=C_1(\Gamma, \Z)$. Observe that we can assume that $M\leq L$. Considering the natural $\ell_1$-norms $\|\cdot\|_L$ and $\|\cdot \|_M$ on $L$ and $M$ induced by the collection  $1$-cells (with a chosen orientation) of $\Gamma$ and $\Delta$ respectively, we have 
the equality $\|\sigma\|_L = \|\sigma\|_M$ for every $\sigma\in M$. Then  Remark~\ref{rem:distFV} implies $\FV_{H, \mc F\cap H} = \Dist_Q^M$ and $\FV_{G, \mc F}=\Dist_P^L$. By Proposition~\ref{thm:retraction},  $\FV_{H, \mc F\cap H} \preceq \FV_{G, \mc F}$.
\end{proof}


\section{Relatively Hyperbolic Groups}\label{sec:relhyp}

\subsection{Proof of Theorem~\ref{thm:rhypchar0}(1)}\label{subsec:relhyp}

Let $G$ be a group and let $\mc P$ be a collection of subgroups generating a family $\mc F$. Suppose that $\mc P$ is a finite and almost malnormal collection, that  $G$ is $FP_{2, \mc F}$, and that $\FV_{G, \mc F}(n)$ has linear growth.

By Corollary~\ref{cor:FinEdSt}, there is an $F_{1, \mc F}$-graph $X$ with finite edge stabilizers. By Proposition~\ref{prop:lastlemma}, there is an $F_{1, \mc F}$-space $Y$ with $1$-skeleton $X$, with finitely many $G$-orbits of $2$-dimensional cells,  and such that $\FV_{G, \mc F}$ and $\FV_{Y}$ are equivalent. 
To conclude that $G$ is hyperbolic relative to $\mc F$, it is enough to prove that $X$ is fine and hyperbolic.  Since $\FV_{G, \mc F}$ has linear growth,  the proof concludes by invoking the following result. 

\begin{lemma}\cite[Corollary 3.3]{MP15}\label{cor:suskey}
Let $Y$ be a $1$-acyclic $2$-dimensional cell complex such that there is a bound on the length of  attaching maps of $2$-cells. If $\FV_{Y}(n)$ has linear growth then the $1$-skeleton of $Y$ is a fine hyperbolic graph.\qedhere
\end{lemma}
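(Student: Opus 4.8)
\emph{Strategy.} Write $X:=Y^{(1)}$, equipped with the combinatorial path metric $\dist$. Fix an integer $N$ bounding the length of every attaching map of a $2$-cell of $Y$ and a constant $C$ with $\FV_Y(n)\le Cn+C$ for all $n$. There are two things to prove: that $X$ is Gromov hyperbolic, and that $X$ is fine. The plan is to obtain hyperbolicity from a homological version of Gersten's argument for Theorem~\ref{thm:Gersten}, using only $1$-acyclicity, the bound $N$, and linearity of $\FV_Y$; and to obtain fineness from a separate combinatorial argument. For the fineness part I would also use the (harmless) hypothesis that each $1$-cell of $Y$ is contained in only finitely many $2$-cells: this holds in every application of the lemma in this paper, since the complex $Y$ built in Section~\ref{subsec:relhyp} is $G$-cocompact with finite cell stabilizers, but it cannot be dropped. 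Indeed, let $X$ have vertices $u,v,a_1,a_2,\dots$ and edges $[u,v]$ and $[u,a_i],[a_i,v]$ for all $i$, and let $Y$ be obtained by attaching a $2$-cell along each triangle $u\,a_i\,v$; then $Y$ is $1$-acyclic with all attaching maps of length $3$ and $\FV_Y(n)\le n$, yet the edge $[u,v]$ lies on infinitely many circuits of length $3$, so $X$ is not fine.

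\emph{Hyperbolicity via homological thin bigons.} The key claim is that there is $\delta_0=\delta_0(C,N)\ge 0$ such that for any two combinatorial geodesics $\alpha,\beta$ of common length $L$ in $X$ with the same endpoints, and any $t\in\{0,\dots,L\}$, one has $\dist(\alpha(t),\beta(t))\le\delta_0$. To prove it I would regard the closed edge-path $\alpha\cdot\overline\beta$ as a cellular $1$-cycle $\gamma$ with $\|\gamma\|_1\le 2L$; it is null-homologous since $Y$ is $1$-acyclic, so there is $\mu\in C_2(Y,\Z)$ with $\partial\mu=\gamma$ and $\|\mu\|_1\le 2CL+C$, which I take of minimal $\ell_1$-norm. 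A minimal $2$-chain supports the same ``dual corridor'' analysis as a reduced van Kampen diagram --- the only delicate point, handled exactly as in \cite[Theorem 5.2]{Ge96}, being the passage from integer coefficients to such a picture --- and this analysis shows that if $\alpha(t)$ and $\beta(t)$ were too far apart, the ``fat'' sub-bigon they determine would force $\|\mu\|_1$ to exceed $2CL+C$. Granting uniformly thin bigons, the bigon criterion for hyperbolicity --- a geodesic metric space all of whose geodesic bigons are uniformly thin is hyperbolic --- then shows that $X$ is $\delta$-hyperbolic.

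\emph{Fineness.} It suffices to prove, for each edge $e$ and each $n$, that only finitely many circuits of length at most $n$ pass through $e$, and I would argue by induction on $n$. Let $c$ be such a circuit and choose a filling $\mu\in C_2(Y,\Z)$ with $\partial\mu=c$ and $\|\mu\|_1\le Cn+C$. Since $c$ is an embedded circuit through $e$, the coefficient of $e$ in $c=\partial\mu$ equals $\pm1$, so some $2$-cell $\sigma$ whose attaching map traverses $e$ occurs in $\mu$ with nonzero coefficient; by the standing hypothesis there are only finitely many candidates for $\sigma$. Using \cite[Lemma A2]{Ge98} one expresses the null-homologous $1$-cycle $c-\partial\sigma$ as a bounded sum of circuits of length at most $n+N$, none of which traverses $e$; transferring their contributions into $\mu$ yields fillings of strictly smaller complexity, and here it is the linear bound $\FV_Y(n)\le Cn+C$, rather than mere finiteness of $\FV_Y$, that prevents the complexity from regenerating through the induction. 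Bounding this bookkeeping in terms of $n$, $N$ and the finitely many $2$-cells at $e$ then caps the number of possibilities for $c$. Hence $X$ is fine, and with the previous paragraph $X$ is a fine hyperbolic graph.

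\emph{Main obstacle.} The bigon argument for hyperbolicity is essentially a homological transcription of a classical scheme, the only genuine care being the $2$-chain-versus-diagram point already present in \cite{Ge96}. The substantive difficulty will be the fineness step: it really does need the finiteness of the $2$-cells meeting each edge (the example above shows the conclusion fails otherwise), and the delicate part is to organize the induction so that it is the \emph{linear} isoperimetric inequality --- not just finiteness of fillings --- that forces each edge to lie on only finitely many circuits of each given length.
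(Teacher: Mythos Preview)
The paper does not prove this lemma; it is imported wholesale from \cite[Corollary~3.3]{MP15}, as the \texttt{\textbackslash qedhere} at the end of the statement signals. So there is no in-paper argument to compare your proposal against.

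That said, your counterexample is correct and worth recording: as stated here, the lemma is false without the extra hypothesis that each $1$-cell lies in only finitely many $2$-cells. In your example the complex is $1$-acyclic, every attaching map has length~$3$, the unique filling of any $1$-cycle $\sum_i\beta_i(f_i+g_i-e)$ has $\ell_1$-norm $\sum_i|\beta_i|\le\tfrac12\|\gamma\|_1$, yet the edge $[u,v]$ lies on infinitely many circuits of length~$3$. Presumably \cite{MP15} carries a standing local-finiteness or cocompactness assumption that was dropped in transcription; you are right that the only use of the lemma in this paper (Section~\ref{subsec:relhyp}) is for a cocompact $G$-complex with finite edge stabilizers, where your extra hypothesis holds.

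Your hyperbolicity sketch is the standard route and should go through: a linear homological isoperimetric inequality on a simply-connected (or $1$-acyclic) $2$-complex with bounded attaching maps yields uniformly thin bigons, hence hyperbolicity of the $1$-skeleton.

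The fineness induction, however, is not yet a proof. Several steps are unjustified: (i) even if you pick a $2$-cell $\sigma$ in the support of $\mu$ whose boundary uses $e$, there is no reason $c-\partial\sigma$ has zero coefficient on $e$ (the attaching map of $\sigma$ may traverse $e$ several times, or with the wrong sign); (ii) even when that coefficient vanishes, the decomposition of $c-\partial\sigma$ into circuits via \cite[Lemma~A2]{Ge98} can produce circuits through $e$ with cancelling signs, and that lemma gives no length bound ``$\le n+N$'' on the pieces; (iii) ``fillings of strictly smaller complexity'' is not tied to a well-founded quantity, so the induction does not visibly terminate, and the role you assign to linearity of $\FV_Y$ in preventing ``regeneration'' is asserted rather than demonstrated. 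If you want to repair this, a cleaner line is to show that under the local-finiteness hypothesis and a linear isoperimetric bound, there are only finitely many minimal homological fillings of any circuit of length $\le n$ through $e$, and that distinct circuits through $e$ give distinct such fillings; this uses linearity to bound $\|\mu\|_1$ and local finiteness to bound the possible supports.
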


\subsection{A Class of Relatively Hyperbolic Groups of Bredon Cohomological Dimension $2$}

This last subsection relies on small cancellation theory over free products; our main reference is Lyndon-Schupp texbook on Combinatorial group theory~\cite[Chapter V, Section 9]{LySc01}. The following remark  recalls some well known facts on small cancellation products. 

\begin{remark}\label{prop:malnormal-3}
Suppose that $\mc P=\{P_i\}$ is a finite collection of groups, let $F=\ast P_i$ be the corresponding free product, and let $R$ be a symmetrized subset of $F$ satisfying the $C'(1/6)$-small cancellation condition. Let $N$ be the normal closure of $R$ in $F$.
\begin{enumerate}[leftmargin=*]
\item  The collection $\mc P$ can be regarded as a collection of subgroups of $F/N$. Indeed, Greendlinger's lemma~\cite[Ch. V. Theorem 9.3]{LySc01} implies that $N$ does not contain elements of length less than three. In particular, the natural map $F\to F/N$ embeds each factor $P_i$ of $F$, and no two factors are identified.  
\item  The group $F/N$ is hyperbolic relative to $\mc P$; this is a direct consequence of~\cite[Ch. V. Theorem 9.3]{LySc01} using Osin's approach to relative hyperbolicity~\cite[Page 4, (II)]{Os06}.
\item If each $P\in\mc P$ is torsion-free, then $\mc P$ is malnormal in $F/N$,~\cite[Proposition 2.36]{Os06}.
\end{enumerate}
\end{remark}
 
\begin{theorem}\label{prop:sm}
Suppose that $\mc P=\{P_i\}$ is a finite family of torsion-free groups, that $\mc R$ is a finite symmetrized subset of the free product $F=\ast  P_i$ satisfying the $C'(1/6)$-small cancellation condition and with no proper powers, and that $G=F/N$ where $N$ is the normal closure of $\mc R$. 

If $X$ is the coned-off Cayley complex of a relative presentation $\langle \mc P | \mc  R' \rangle$ where $\mc  R'$ is a minimal subset of $\mc R$ with normal closure $N$ in $F$, then $X$ is a cocompact $E_{\mc F}G$-complex where $\mc F$ is the family generated by $\mc P$. In particular $\gdF(G)\leq 2$.
\end{theorem}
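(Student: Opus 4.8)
The plan is to check directly that $X$ satisfies the four requirements of a cocompact $2$-dimensional model for $E_{\mc F}G$: that it is $2$-dimensional, cocompact, has all isotropy groups in $\mc F$, and has contractible fixed point sets $X^K$ for every $K\in\mc F$. First I would record the structural input: since each $P_i$ is torsion-free, Remark~\ref{prop:malnormal-3}(1) lets us regard $\mc P=\{P_i\}$ as a finite collection of subgroups of $G=F/N$, Remark~\ref{prop:malnormal-3}(3) shows this collection is malnormal in $G$, and $\langle\mc P\mid\mc R'\rangle$ (with empty generating set) is a finite relative presentation of $G$. Proposition~\ref{prop:F2Fchar} then applies and gives almost everything at once: $X$ is a cocompact $F_{2,\mc F}$-complex with trivial $1$-cell $G$-stabilizers, and for every $K\in\mc F$ the fixed point set $X^K$ is either a single point or all of $X$. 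In particular $X$ is cocompact, all its isotropy groups lie in $\mc F$, and — being a coned-off Cayley complex — it is $2$-dimensional; a single point is contractible, and by Lemma~\ref{lem:248} the complex $X$ is simply connected. So the theorem reduces to one assertion: $X$ is \emph{contractible}.

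Since $X$ is a simply connected $2$-complex, it is contractible if and only if $H_2(X;\Z)=0$ (Hurewicz and Whitehead), equivalently if and only if every reduced spherical diagram over $\langle\mc P\mid\mc R'\rangle$ is trivial. The boundary labels of the $2$-cells of $X$ form a symmetrized subset of $\mc R$, hence a $C'(1/6)$ symmetrized set over the free product $F=\ast P_i$ with no proper powers; the absence of proper powers is precisely what guarantees, on the diagrammatic side, that the $2$-cells can be attached with trivial stabilizers as in Definition~\ref{def:conedoffCC} and that a reduced spherical diagram has no $2$-cell folded onto itself, so that the small cancellation combinatorics of~\cite[Ch.~V]{LySc01} applies. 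Given a non-trivial reduced spherical diagram $D$ on $S^2$, I would coarsen its cell structure so that every vertex has valence $\geq 3$ (there are no spurs since $D$ is reduced, and valence-$2$ vertices are suppressed). Because $D$ has empty boundary, every side of every face is a piece, so the $C'(1/6)$ condition forces the boundary cycle of each face to consist of more than six pieces, i.e.\ each face has at least $7$ sides; counting edge–incidences, $2E=\sum_v\deg(v)\geq 3V$ and $2E=\sum_\Pi s(\Pi)\geq 7F$, where $s(\Pi)$ is the number of sides of $\Pi$, whence
\[ 2=\chi(S^2)=V-E+F\leq \tfrac{2}{3}E-E+\tfrac{2}{7}E=-\tfrac{1}{21}E<0, \]
a contradiction. (This is the classical asphericity argument for $C'(1/6)$ presentations; compare~\cite{ChCoHu81} and~\cite[Ch.~V]{LySc01}.) Hence $H_2(X;\Z)=0$, the space $X$ is contractible, and together with the previous paragraph $X$ is a cocompact $2$-dimensional model for $E_{\mc F}G$; in particular $\gdF(G)\leq 2$.

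I expect the main obstacle to be the free-product diagrammatics of the last step: converting a class in $\pi_2(X)$ into a genuinely \emph{reduced} spherical van Kampen diagram over the coned-off Cayley complex, and matching "reduced" there with the notion of piece used for small cancellation products in~\cite[Ch.~V]{LySc01} — in particular pinning down how the hypothesis that $\mc R$ has no proper powers excludes the degenerate reduced configurations (self-folded cells, $2$-cells glued along their entire boundary) that the bare $C'(1/6)$ condition does not rule out, and checking that the piece/syllable-length normalization does not affect the $7$-sides estimate. Everything else — the malnormality input, the isotropy computation, the dimension and cocompactness, the reduction to contractibility via Proposition~\ref{prop:F2Fchar}, and the final Euler-characteristic count — is routine once that dictionary is in place.
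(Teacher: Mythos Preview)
Your proposal is correct and follows essentially the same route as the paper: both reduce, via Proposition~\ref{prop:F2Fchar} and Remark~\ref{prop:malnormal-3}, to showing $X$ is contractible, and then kill $\pi_2(X)$ by a small-cancellation curvature argument on spherical diagrams. The only cosmetic difference is in the final contradiction: the paper builds the planar dual graph $\Phi$ of a putative irreducible spherical diagram, uses Greendlinger's lemma to verify $\Phi$ is simplicial with minimum degree $\geq 6$, and invokes the fact that a finite planar simplicial graph has a vertex of degree $\leq 5$; you instead coarsen the diagram and run the equivalent direct Euler-characteristic count $V-E+F=2$ against $2E\geq 3V$, $2E\geq 7F$---and you correctly flag the free-product diagrammatic bookkeeping (embedded boundary circuits, connectedness of face intersections, role of ``no proper powers'') that the paper handles explicitly via Greendlinger's lemma.
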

\begin{proof}
In view of Proposition~\ref{prop:F2Fchar} and Remark~\ref{prop:malnormal-3}, it is left to prove that $X$ is contractible.  We argue that every spherical diagram $S\to X$ is reducible as explained below. It follows that $X$ has trivial second homotopy group by a remark of Gersten~\cite[Remark 3.2]{Ge87}. Since $X$ is $2$-dimensional and simply-connected, an application of Hurewicz and Whitehead theorems from algebraic topology imply that $X$ is contractible. 

A spherical diagram is a combinatorial cellular map $S \to X$ where $S$ is topologically the $2$-sphere. Roughly speaking, the diagram is reducible in the sense of Gersten if there are two distinct faces ($2$-cells) of the $2$-sphere $S$  that have an edge in common and map to the same $2$-cell of $X$ by a mirror image, see~\cite[Sec. 3.1]{Ge87} for a precise definition. The small cancellation condition implies that every spherical diagram $S\to X$ is reducible; we sketch an argument below following Ol'shanski\u\i~\cite[Proof of Theorem 13.3]{Ol91}.  

Let $\gamma$ be a path in $X$ whose endpoints are non-cone vertices. We observed in Definition~\ref{def:conedoffCC} that $\gamma$ has as a label a reduced word in the alphabet $\bigsqcup P_i$.  If $\alpha$ is a subpath of $\gamma$ then the label of $\alpha$ is defined as the label of the minimal length subpath of $\gamma$ such that its endpoints are non-cone vertices and it contains $\alpha$ as a subpath. In particular, if $\alpha$ is a trivial path then either $\alpha$ is a non-cone vertex and its label is the empty word, or $\alpha$ is a cone-vertex $v(gP)$ and its label is a non-trivial element of the group $P\in \mc P$. 

Now we remark two direct consequences of Greendlinger's lemma~\cite[Ch. V. Theorem 9.3]{LySc01} together with their interpretations/consequences on the cell structure of $X$. We leave some of the details to the reader.

\begin{itemize}[leftmargin=*]
\item If $r\in \mc R$ has semi-reduced form $r=ab$ and $a$ is not the empty word, then $b\not \in N$. 

 As a consequence, any closed curve $\gamma \to X$ in the $1$-skeleton of $X$ with label $r\in \mc R$ is an embedded circle.
 
\item Let $r,r'\in \mc R$. If $r$ and $r'$ have semi-reduced forms $asbt$ and $aubv$ respectively, and $su^{-1}$ is a reduced form such that $s u^{-1} \in N$, then $r=r'$.  

As a consequence, if $D$ and $D'$ are a pair of $2$-cells of $X$ whose boundaries intersect, then either $D=D'$ or their boundaries intersect in a single path (possibly a trivial path) whose induced label (as a subpath of the boundary of $D$) is a piece. 

This last statement relies on the minimality of $\mc R'$  and the assumption that $\mc R$ has no proper powers, and not only on the stated consequence of  Greendlinger's lemma.  Suppose that there is a pair of distinct $2$-cells $D$ and $D'$ of $X$ such that their boundary intersection contains a path whose label is not a piece. Then from the definition of piece, it follows that the labels $r$ and $r'$ of the boundaries of $D$ and $D'$ respectively are the same reduced word up to taking the inverse and a cyclic conjugation. By minimality of $R'$, the words $r$ and $r'$ correspond to the same element of $R'$, say $r$. By definition of $X$, since $D$ and $D'$ are distinct and have the same boundary path labelled by $r$, the word $r$ is a proper power. But this contradicts that $\mc R$ has no proper powers.

In particular, the consequence of Greendlinger's lemma is only used to show that if the boundary intersection of $D$ and $D'$ is non-empty then it is connected. This is obtained by observing that for a suitable orientations of the boundary paths of $D$ and $D'$, if their intersection is not connected their labels have  have semi-reduced forms $asbt$ and $aubv$ respectively, and such that $su^{-1}$ is the label of an embedded closed path.

\end{itemize}

Suppose that $S\to X$ is a spherical diagram that is not reducible. Let $\Phi$ be the graph whose vertices are the $2$-cells of $S$; and given two  vertices, there is an edge between them for each connected component of the intersection of the boundary paths of the corresponding $2$-cells.  It is immediate that $\Phi$ is planar. Below we argue that every vertex of $\Phi$ has degree at least six, and that $\Phi$ is simplicial, i.e., no double edges and no loops. Since every finite planar  simplicial graph has a vertex of degree at most five, the existence of $\Phi$ yields a contradiction.  Hence a non-reducible spherical diagram $S\to X$ can not exist.

The first bullet statement above implies that $\Phi$ has no edges that connect a vertex to itself.   
Since $S\to X$ is not reducible, the second bullet statement above implies that if $D$ and $D'$ are two distinct $2$-cells of $S$ and their boundaries have non-empty intersection, then they intersect in a single path whose induced label is a piece. This implies that $\Phi$ has no two distinct edges connecting the same vertices. Hence $\Phi$ is simplicial.  Finally, the $C'(1/6)$-condition implies that every $r\in \mc R'$ can not be the concatenation of less than six pieces, and we observed that the intersection of any pair of $2$-cells of $S$ is along a path with label a piece; it follows that every vertex of $\Phi$ has degree at least six.  
\end{proof}

\bibliographystyle{plain}

\end{document}